\documentclass[a4paper,11pt,english]{amsart}
\usepackage{amsmath,latexsym,amssymb,amsfonts}
\usepackage{amscd,amssymb,epsfig}
\usepackage[arrow, matrix, curve]{xy}
\usepackage[lined,linesnumbered,boxed,algo2e]{algorithm2e}
\usepackage[colorlinks=true, linkcolor=purple, urlcolor=blue, citecolor=teal]{hyperref}
\usepackage{mathdots}
\usepackage{multirow}
\usepackage{siunitx}
\usepackage{enumitem}
\usepackage{xcolor}
\usepackage[nameinlink]{cleveref}

\numberwithin{equation}{section}

\newtheorem{theorem}{Theorem}[section]
\newtheorem{lemma}[theorem]{Lemma}
\newtheorem{corollary}[theorem]{Corollary}
\newtheorem{proposition}[theorem]{Proposition}

\theoremstyle{definition}
\newtheorem{definition}[theorem]{Definition}

\newtheorem{example}[theorem]{Example}
\newtheorem{remark}[theorem]{Remark}

\setlist[enumerate,1]{label={(\roman*)}}

\renewcommand{\epsilon}{\varepsilon}

\newcommand{\A}{\mathcal{A}}

\newcommand{\B}{\mathcal{B}}
\newcommand{\C}{\mathbb{C}}
\newcommand{\D}{\mathcal{D}}
\newcommand{\bD}{\mathbb{D}}
\newcommand{\E}{\mathbb{E}}

\newcommand{\FF}{\mathbb{F}}
\renewcommand{\H}{\mathbb{H}}

\newcommand{\M}{\mathcal{M}}
\newcommand{\N}{\mathbb{N}}

\newcommand{\Q}{\mathbb{Q}}
\newcommand{\R}{\mathbb{R}}

\newcommand{\fx}{\mathbf{x}}
\newcommand{\Z}{\mathbb{Z}}

\newcommand{\id}{\operatorname{id}}

\newcommand{\tr}{\operatorname{tr}}

\newcommand{\Tr}{\operatorname{Tr}}

\renewcommand{\Re}{\operatorname{Re}}
\renewcommand{\Im}{\operatorname{Im}}

\newcommand{\rank}{\operatorname{rank}}

\newcommand{\diam}{\operatorname{diam}}
\newcommand{\dist}{\operatorname{dist}}
\renewcommand{\1}{\mathbf{1}}
\newcommand{\0}{\mathbf{0}}

\newcommand{\NC}{\operatorname{NC}}




\makeatletter
\def\moverlay{\mathpalette\mov@rlay}
\def\mov@rlay#1#2{\leavevmode\vtop{%
\baselineskip\z@skip \lineskiplimit-\maxdimen
\ialign{\hfil$#1##$\hfil\cr#2\crcr}}}
\makeatother


\usepackage{eqparbox}
\usepackage[numbers]{natbib}


\makeindex

\title[Inner rank by means of operator-valued free probability]{Computing the noncommutative inner rank by means of operator-valued free probability theory}

\author[J. Hoffmann]{Johannes Hoffmann}
\address{Saarland University, Department of Mathematics, D-66123 Saarbr\"ucken, Germany}
\email{Johannes.Hoffmann@math.uni-sb.de}

\author[T. Mai]{Tobias Mai}
\email{mai@math.uni-sb.de}

\author[R. Speicher]{Roland Speicher}
\email{speicher@math.uni-sb.de}

\date{\today}

\thanks{This work was supported by the SFB-TRR 195 “Symbolic Tools in Mathematics and their Application” of the German Research Foundation (DFG).
\\
We thank two referees whose questions and suggestions resulted not only in a total reorganisation of this paper, but also motivated the investigations in \cite{MaiSpeicher2024}.}

\keywords{noncommutative inner rank, noncommutative Edmonds' problem, free probability theory, operator-valued semicircular elements}

\subjclass[2010]{46L54, 65J15, 12E15}

\begin{document}

\begin{abstract}
We address the noncommutative version of the Edmonds' problem, which asks to determine the inner rank of a matrix in noncommuting variables.
We provide an algorithm for the calculation of this inner rank by relating the problem with the distribution of a basic object in free probability theory, namely operator-valued semicircular elements.
We have to solve a matrix-valued quadratic equation, for which we provide precise analytical and numerical control on the fixed point algorithm for solving the equation.
Numerical examples show the efficiency of the algorithm.
\end{abstract}

\maketitle

\section{Introduction}

We will address the question how to compute the rank of matrices in noncommuting variables. The classical, commuting, version of this is usually called the \emph{Edmonds' problem} and goes back to \cite{Edmonds67}: given a square matrix, where the entries are linear functions in commuting variables, one should decide whether this matrix is invertible over the field of rational functions; more general, one should calculate the rank of the matrix over the field of rational functions. 

What we will consider here is a noncommutative version of this, where commuting variables are replaced by noncommuting ones; the usual rational functions are replaced by noncommutative rational functions (aka free skew field); and the commutative rank is replaced by the inner rank. The free skew field is a quite complicated object; there is no need for us to delve deeper into its theory, as we can and will state the problem in terms of the rank. For more information on the Edmonds' problem and its noncommutative version we refer to the first section of \cite{GGOW19}; in particular, this contains descriptions and references for the various incarnations of the free field, its rank and the equivalent characterizations of the fullness of such noncommutative matrices.

Let us be a bit more precise. Consider a matrix
$A=a_1\otimes \fx_1+\ldots+a_n\otimes \fx_n$, 
where
$\fx_1,\dots,\fx_n$ are noncommuting variables and $a_1,\dots,a_n\in M_N(\C)$ are arbitrary matrices of the same size $N$; $N$ is fixed, but arbitrary. 
The noncommutative Edmonds' problem asks now whether $A$ is invertible over the noncommutative rational functions in $\fx_1,\dots,\fx_n$; this is equivalent (by deep results of Cohn, see \cite{cohn_2006}) to asking whether
$A$ is full, i.e., its noncommutative rank is equal to $N$. The noncommutative rank is here the inner rank $\rank(A)$, i.e., the smallest integer $r$ such that we can write $A$ as a product of an $N\times r$ and an $r\times N$-matrix. This fullness of the inner rank can also be equivalently decided by a more analytic object (see \cite{GGOW19}): to the matrix $A$ from above we associate a completely positive map $\eta:M_N(\C)\to M_N(\C)$, which is defined by $\eta(b):=\sum_{i=1}^n a_iba_i^*$. In terms of $\eta$, the fullness condition for $A$ is equivalent to the fact that $\eta$ is rank non-decreasing (here we have of course the ordinary commutative rank on the complex $N\times N$-matrices).

This noncommutative Edmonds' problem has become quite prominent in recent years and there are now a couple of deterministic algorithms, see \cite{GGOW16, GGOW19, ivanyos2018constructive, hamada2020computing,chatterjee2023noncommutative}.
We will provide here another analytic approach to decide the fullness of $A$, actually to calculate more generally the inner rank of $A$. Our main point is to give a different perspective on the problem, by relating it with recent progress in free probability theory. For general information on free probability we refer to \cite{VDN92,MS17} and \Cref{sect:intro_free_prob}; various numerical aspects of free probability were especially treated in \cite{edelman2005random,rao2008polynomial,nadakuditi2022free}.

We propose here a noncommutative probabilistic approach to the noncommutative Edmonds' problem, by replacing the formal variables $\fx_i$ by concrete operators on infinite-dimensional Hilbert spaces.
A particular nice choice for the analytic operators are freely independent semicircular variables $s_1,\dots,s_n$, which are the noncommutative analogues of independent Gaussian random variables. In the case where all the $a_i$ are selfadjoint, the matrix 
$S=a_1\otimes s_1+\ldots+a_n\otimes s_n$ is a matrix-valued semicircular element.
Since for arbitrary $A$ the inner rank of the $N\times N$-matrix $A$ is half the inner rank of the selfadjoint $2N\times 2N$-matrix 
$$\begin{pmatrix}
0& A\\
A^*& 0
\end{pmatrix},$$
as one can deduce from Lemma 5.5.3 and Theorem 5.5.4 in \cite{cohn_2006}, it suffices to consider in the following the selfadjoint situation. Then the corresponding $S$ is also a selfadjoint operator, hence its distribution $\mu_S$ (in the sense of \Cref{subsect:scalar-valued-case}) is a probability measure on $\R$.

By recent results of \cite{MSY2023} we know that
the invertibility of $A$ over the noncommutative rational functions is equivalent to the invertibility of $S$ as an unbounded operator. But this is equivalent to the question whether $S$ has a trivial kernel, which is equivalent to the question whether its distribution $\mu_S$ has no atom at zero.

The distribution $\mu_S$ of our matrix-valued semicircular element $S$ can, by results of free probability theory, be described as follows.
The Cauchy transform $g(z)$ of $\mu_S$, that is, the analytic function
\begin{equation}\label{eq:gisCauchytransform}
g(z)=\int_\R \frac 1{z-t}d\mu_S(t)
\end{equation}
on $\C^+$, is of the form
\begin{equation}\label{eq:gisTraceofG}
    g(z):=\frac 1N \Tr(G(z))
\end{equation}
where $G(z)$ is given as the unique solution of the matrix-valued equation
\begin{equation}\label{eq:operator-semi}
zG(z)=\1+\eta(G(z)) G(z),
\end{equation}
in the lower half-plane $\H^-(M_N(\C))$ of $M_N(\C)$,
where $\1\in M_N(\C)$ denotes the identity matrix.
One should note that for each $z\in\C^+$ there is (by results of \cite{HRFS2007}) exactly one solution $G(z)$ in the complex lower half-plane of $M_N(\C)$). 

One knows that $\mu_S$ can have an atom only at zero, and the inner rank of $A$ is related to the mass of this atom at zero; more precisely,
\begin{equation}\label{eq:rank-atom}
 \rank(A)=N(1-\mu_S(\{0\})).   
\end{equation}

All those statements are non-trivial and follow from works in the last decade or so on free probability and in particular on analytic realizations of noncommutative formal variables via free semicircular random variables; a precise formulation for this connection will be given in \Cref{thm:rank-atom}.

In order to make concrete use of this relation between the inner rank of $A$ and the size of the atom of $\mu_S$ at zero we need precise analytical and numerical control on the fixed point algorithm for solving the equation \eqref{eq:operator-semi} and we also have to deal with the de-convolution problem of extracting information about atoms of a probability measure from the knowledge of its Cauchy transform on the imaginary axis. In the next section we will give a precise high level description of our algorithm. The details, proofs and examples will then be filled in in later sections. 

Whereas the noncommutative Edmonds' problem can be stated over any field $\FF$ (that is, all $a_j\in M_N(\FF)$), our free probability and Banach space techniques are rooted in the field of complex numbers, thus we will in the following restrict to $\FF=\C$ (or subfields of $\C$).

The original problem of calculating the inner rank of our matrix $A$ can be stated as solving a system of quadratic equations, and the matrix equation \eqref{eq:operator-semi} above is also not more than a system of quadratic equations for the entries of $G$. So one might wonder what advantage it brings to trade in one system of quadratic equations for another one. The point is that our system \eqref{eq:operator-semi} has a lot of structure, especially positivity in the background, coming from the free probability interpretation of this setting, and thus we have in particular analytically controllable fixed point iterations to solve those systems. 

Apart from this Introduction the paper has three more sections and an appendix. In \Cref{sect:algorithm}, we present our algorithm, together with the high-level explanation why it works. \Cref{sect:examples} gives then two concrete numerical examples for the algorithm, one for the full case and one for the non-full case. In the latter example, we show also how one can actually calculate the inner rank in the non-full case by looking on enlarged matrices. \Cref{sect:free_prob_bridge} provides some basic introduction to the relevant theoretical background from free probability theory; in particular, we explain the relation between the algebraic problem of calculating the inner rank and the analytic problem of calculating atoms in the distribution of corresponding matrix-valued semicircular elements. The more advanced technical tools around operator-valued free probability are deferred to \Cref{sect:intro_free_prob}. In particular, our quite technical estimates and termination statements for the fixed point iteration of the key equation \eqref{eq:operator-semi} will appear in \Cref{sect:approximation-of-Cauchy-transforms}.

\section{The algorithm}\label{sect:algorithm}

Given $a_1,\dots,a_n\in M_N(\C)$ we want to calculate the inner rank of the formal matrix
$A=a_1\otimes \fx_1+\ldots+a_n\otimes \fx_n$
in noncommuting variables
$\fx_1,\dots,\fx_n$.
Since we can only encode rational entries on the computer and those can, by scaling of the matrices, be changed to integers (without affecting the inner rank), we consider matrices over $\Z[i]=\Z+i\Z$.
Such an assumption of integer-valued entries is also relevant for our key theoretical estimate in \Cref{cor:key}.
For simplicity, we restrict our presentation to matrices over $\Z$, that is, we assume $a_i\in M_N(\Z)$.

As mentioned before, the general case can be reduced to the selfadjoint case, and thus in the following we will also always assume that all $a_i$ are selfadjoint. In that case, we need the information about the size of the atom at zero of the probability measure $\mu_S$ of the corresponding operator $S=a_1\otimes s_1+\ldots+a_n\otimes s_n$. The Cauchy transform $g(z)$ of this measure is given by equations \eqref{eq:gisCauchytransform} and \eqref{eq:gisTraceofG}, by solving the equation \eqref{eq:operator-semi}, where $\eta$ is the completely positive map
$$\eta:M_N(\C)\to M_N(\C),\qquad \eta(b):=\sum_{i=1}^n a_iba_i.$$
We will show in \Cref{prop:theta_properties} that the function
$\theta_S(y):= - y \Im g(iy))$
on $\R^+ := (0,\infty)$ contains the information about the atom at zero. In particular, this function is monotonically increasing and converges, for $y\searrow 0$, from above to the size of the atom. 

Let us first restrict to the task of deciding whether an atom exists or not, that is, whether the rank is maximal or not. Since, by the relation \eqref{eq:rank-atom} (see also \Cref{thm:rank-atom}), the possible sizes for the atom can only be multiples of $1/N$, one can be sure that no atom exists if one has for some $y_0>0$ that $\theta_S(y_0)< 1/N$. On the other hand,
having $\theta_S(y_0)\geq 1/N$ does in general not allow to conclude anything, because we might fall below $1/N$ for some smaller $y$. However, for the distributions $\mu_S$ of our matrix-valued semicircular elements the situation is better, since we have in these cases, under the assumption of no atom at zero, some a priori information about the accumulation of the distribution $\mu_S$ about zero.

By \Cref{thm:rank-atom} we know that our measure is of regular type, that is, in some neighborhood of 0 one has $\mu_S([-r,r]))\leq c r^\beta$. 
Though we know by abstract arguments that all our $\mu_S$ are of regular type, we have no way of determining the values of $\beta$ or $c$ in concrete cases, nor do we have general estimates on them for all our $\mu_S$. Hence this regularity information can not be used for choosing a $y_0$ in our algorithm at the moment. However, we hope that in the future we will be able to derive more precise information about those regularity parameters.

This absence of general control on the regularity parameters $c$ and $\beta$ had the effect that in the first version of this paper we could not provide a certificate for the termination of our algorithm. Encouraged by the questions of two referees we looked for a more definitive result in this direction and could achieve a recent breakthrough on this in \cite{MaiSpeicher2024}. Namely, instead of relying on the regularity parameters $c$ and $\beta$, we relate the behaviour of our measure $\mu_S$ in the neighborhood of 0 with the so-called Fuglede-Kadison determinant $\Delta(S)$ of our matrix-valued semicircular operator $S$ (see \Cref{def:Fuglede-Kadison}). 
We will derive in \Cref{cor:FK-distribution_near_zero} that if $S$ is invertible as an unbounded operator (which means that the corresponding $A$ has full rank) then we have for any $\delta>0$:
$$\text{If}\qquad 0<y< \Vert S\Vert \left(\frac{\Delta(S)}{\Vert S\Vert}\right)^{2/\delta} \left(\frac\delta 2\right)^{1/2},\qquad\text{then}\qquad
    \theta_S(y)<\delta.$$
An upper bound for $\Vert S\Vert$ and a lower bound for $\Delta(S)$ will thus lead to a value for $y_0$. The upper bound 
$\Vert S\Vert\leq 2\Vert \eta(\1)\Vert^{1/2}$ for matrix-valued semicircular elements with covariance mapping $\eta$ is well-known (see the discussion around Theorem 9.2 in \cite{MS17}). In contrast to the situation with the regularity parameters, we also have a lower estimate for $\Delta$ for our matrix-valued semicircular elements; namely in \cite{MaiSpeicher2024} we have shown that in the case where all our coefficient matrices have integer valued entries -- that is, $a_j\in M_N(\Z)$ for all $j$ --  we have the uniform estimate $\Delta(S)\geq e^{-1/2}$. 
This allows us to prescribe a concrete value for $y_0$ in our algorithm, such that the value of $\theta_S(y_0)$ decides upon whether we have an atom or not. Namely, if we set 
$$y_0:=\bigl(4e\Vert\eta(\1)\Vert\bigr)^{1/2-1/\delta}\left(\frac\delta {2e}\right)^{1/2}$$ 
the estimates from above guarantee, in the case $a_j\in M_N(\Z)$ and for $\delta<2$, that $\theta_S(y_0)\leq \delta$ if $S$ is invertible; see also \Cref{cor:key}.

In order to get a certificate for the invertibility of $S$ we choose $\delta=\frac 1{2N}$ and the corresponding 
$$y_0:=\bigl(4e\Vert\eta(\1)\Vert\bigr)^{1/2-2N}\left(\frac 1 {4Ne}\right)^{1/2}.$$ 
If we then allow in our iteration algorithm for the calculation of $\theta_S(y_0)$ an error strictly less than $\frac 1{4N}$, we have the following two possibilities for the value of $\theta_S(y_0)$. 
\begin{enumerate}
    \item 
If $S$ is invertible then, by the arguments above, $\theta_S(y_0)\leq 1/{(2N)}$, and hence our calculated value $\tilde\theta$ must be strictly less than $3/{(4N)}$; 
\item 
if, on the other hand, $S$ is not invertible, then we must have an atom of size at least $1/N$, that is $\theta_S(y_0)\geq 1/N$, and thus in our calculation we must see a value $\tilde\theta$ which is strictly greater than $ 3/{(4N)}$.
\end{enumerate}
These observations prove the correctness of \Cref{alg}.

\begin{algorithm2e}
    \DontPrintSemicolon
    \caption{Deciding whether $A$ has full rank or not. Note that this choice of $\sigma$ leads to an error $|\tilde{\theta}-\theta_S(y_0)|<1/(4N)$ via \Cref{cor:theta_approximation} with $\varepsilon=1/(4N)$. Note that $\tr_N(b):=\Tr(b)/N$ is the normalized trace on $M_N(\C)$.}
    \label{alg}
    \KwIn{$(a_1,\ldots,a_n)$, where $a_j\in M_N(\Z)$ are selfadjoint}
    \KwOut{\textbf{true}, if $A=a_1\otimes \fx_1+\ldots+a_n\otimes \fx_n$ has full rank, \textbf{false} otherwise}
    \begin{enumerate}[label={\normalfont(\arabic*)}]
        \item
            Let $\eta(b):=\sum_{i=1}^{n}a_iba_i$ and compute
            \[
                y_0
                :=\bigl(4e\Vert\eta(\1)\Vert\bigr)^{1/2-2N}\left(\frac 1 {4Ne}\right)^{1/2}.
            \]
        \item
            Compute $\tilde{w}_*$ as the result of \Cref{alg_iteration} with parameters $\eta$, $b=iy_0\1$, and $\sigma=1/(4N+1)$.
        \item
            Compute $\tilde{\theta}:=-y_0\Im(\tr_N(\tilde{w}_*))$.
        \item
            If $\tilde{\theta}<{3}/{(4N)}$ output \textbf{true}, otherwise output \textbf{false}.
    \end{enumerate}
\end{algorithm2e}

\begin{algorithm2e}
    \DontPrintSemicolon
    \caption{Compute the approximate solution of $bw=\1+\eta(w)w$ in $\H^-(M_N(\C))$. The correctness of this algorithm is ensured by \Cref{prop:opval_semicircular_approximation}. Note that ``approximate solution'' is to be understood in the sense of \eqref{eq:opval_semicircular_approximation-1} and \eqref{eq:opval_semicircular_approximation-2}; in particular, if $w_* \in \H^-(M_N(\C))$ is the unique exact solution, then $\|\tilde{w}_* - w_*\| < \frac{\sigma}{1-\sigma}\|\Im(b)^{-1}\|$. The algorithm terminates due to \Cref{cor:iteration_estimate_Cauchy} and \Cref{lem:termination_condition_Delta}. For an alternative formulation of the error bound, see \eqref{eq:iteration_termination_strong}.}
    \label{alg_iteration}
    \KwIn{$\eta:M_N(\C)\rightarrow M_N(\C), b \mapsto \sum^n_{i=1} a_i b a_i$ with $a_j \in M_n(\C)$ selfadjoint, $b\in\H^+(M_N(\C))$, $\sigma\in(0,1)$}
    \KwOut{$\tilde{w}_*$, approximate solution of $bw=\1+\eta(w)w$ in $\H^-(M_N(\C))$}
    \begin{enumerate}[label={\normalfont(\arabic*)}]
        \item
            Set $w:=-i\1$ and $\delta:=\sigma\|\Im(b)^{-1}\|^{-1}$.
        \item
            Iterate $w:=(b-\eta(w))^{-1}$ until $\|b-w^{-1}-\eta(w)\| < \delta$.
        \item
            Return $w$.
    \end{enumerate}
\end{algorithm2e}

One should note that our results guarantee a termination of \Cref{alg_iteration} and hence of \Cref{alg}, but the number of steps in our fixed point algorithm \Cref{alg_iteration} for the solution of the Equation \eqref{eq:operator-semi} will in the worst case, by \Cref{alg_iteration} and the a priori estimates \eqref{eq:iteration_estimate_Cauchy_a-priori} in \Cref{cor:iteration_estimate_Cauchy}, be exponential in the matrix size $N$.
However, by \Cref{prop:opval_semicircular_approximation} and the corresponding \Cref{cor:theta_approximation} we also have a posteriori estimates, which usually lead to a much earlier termination of the fixed point algorithm.
One should also note that numerical instabilities are not an issue of our fixed point algorithm, since \eqref{eq:strict_inclusion} (see also \Cref{cor:iteration_estimate_Cauchy}) guarantees that our iteration map stays within a compact region and can be bounded away from singularities.

Improving our control of the regularity parameters might lead to better complexity behaviour of the algorithm.
The results in \cite{BM2020,KR2023} give us some hope that progress on the regularity parameters should be possible. Also the numerical simulations indicate that the control of $\mu_S$ about zero via the Fuglede-Kadison determinant is not optimal. In all our simulations $\theta_S(y)$ stabilizes to its limiting value for much bigger values of $y$ than the $y_0$ which is derived from our determinant estimate. 

Let us finally extend our task from just deciding whether the rank is maximal or not to the task of actually calculating the rank. Our numerical examples suggest again that the value of $\theta_S(y_0)$ should have stabilized at its limit value, not only in the case of full rank, but in general. For a rigorous proof of this we would need to extend our estimates of the Fuglede-Kadison determinant to the case where $S$ is not invertible. Of course, then $\Delta(S)=0$ and thus no direct information can be gotten from this. However, there exists a modified version of $\Delta$, which ignores the atomic part at zero of the distribution. If we could extend our lower bounds on the determinant to this setting, this would then certify that our calculation of $\theta_S(y_0)$ does not only distinguish between full and not-full, but does actually give the value of the noncommutative rank directly.

For the moment, however, we have to rely on the usual methods to reduce the calculation of the rank to the problem of deciding fullness. There are different possibilities to do so; we refer to the appendix of 
\cite{GGOW19} for a discussion of this. In the following section we will also present a concrete example how this works in our setting.

\section{Examples}\label{sect:examples}

All numerical examples in this section were computed using our library \texttt{NCDist.jl} (\cite{NCDist}) for the Julia programming language (\cite{julia}).

\subsection{Full matrix}

Consider the selfadjoint matrix
\begin{equation}\label{eq:matrix-full}
A=\begin{pmatrix}
0&2\fx_1+\fx_3&\fx_2\\
2\fx_1+\fx_3&0&\fx_3\\
\fx_2&\fx_3&0
\end{pmatrix}
\end{equation}
in three formal noncommuting variables $\fx_1,\fx_2,\fx_3$. This matrix $A$ has inner rank 3, i.e., it is full. This can be seen by our approach as follows.

According to \Cref{thm:rank-atom} we have to look for an atom at zero of the analytic distribution $\mu:=\mu_S$ of the operator-valued semicircular element
\begin{equation}\label{eq:matrix-full-semicircular}
S=\begin{pmatrix}
0&2s_1+s_3&s_2\\
2s_1+s_3&0&s_3\\
s_2&s_3&0
\end{pmatrix}
\end{equation}
The corresponding completely positive map
$\eta:M_3(\C)\to M_3(\C)$ is given by
$$
\eta
\begin{pmatrix}
b_{11}&b_{12}&b_{13}\\
b_{21}&b_{22}&b_{23}\\
b_{31}&b_{32}&b_{33}
\end{pmatrix}=
\begin{pmatrix}
5 b_{22}+b_{33}&5b_{21}+b_{23}&b_{22}+b_{31}\\
5b_{12}+b_{32}&5b_{11}+b_{31}+b_{13}+b_{33}&b_{12}+b_{32}\\
b_{22}+b_{13}&b_{21}+b_{23}&b_{22}+b_{11}
\end{pmatrix}$$
Then the operator-valued Cauchy-transform 
$$G:\C^+\to M_3(\C),\qquad
G(z)=\begin{pmatrix}
g_{11}(z)& g_{12}(z)&g_{13}(z)\\
g_{21}(z)&g_{22}(z)&g_{23}(z)\\
g_{31}(z)&g_{32}(z)&g_{33}(z)
\end{pmatrix}
$$
of $S$
satisfies the equation $zG(z)=\1+\eta(G(z))\cdot G(z)$, i.e., the system of quadratic equations

\[
z
\begin{pmatrix}
g_{11}&g_{12}&g_{13}\\
g_{21}&g_{22}&g_{23}\\
g_{31}&g_{32}&g_{33}
\end{pmatrix}=\begin{pmatrix}
1&0&0\\
0&1&0\\
0&0&1
\end{pmatrix}+\eta(G(z))\cdot\begin{pmatrix}
g_{11}&g_{12}&g_{13}\\
g_{21}&g_{22}&g_{23}\\
g_{31}&g_{32}&g_{33}
\end{pmatrix},
\]
where
\[
\eta(G(z))
=\begin{pmatrix}
5 g_{22}+g_{33}&5g_{21}+g_{23}&g_{22}+g_{31}\\
5g_{12}+g_{32}&5g_{11}+g_{31}+g_{13}+g_{33}&g_{12}+g_{32}\\
g_{22}+g_{13}&g_{21}+g_{23}&g_{22}+b_{11}
\end{pmatrix}.
\]
The corresponding function
$$g(z)=\frac 13 (g_{11}(z)+g_{22}(z)+g_{33}(z))$$
is then the Cauchy transform of the wanted analytic distribution $\mu$, which is depicted in \Cref{fig:full_matrix_dist}.
\begin{figure}[ht]
    \centering
    \includegraphics[width=8cm]{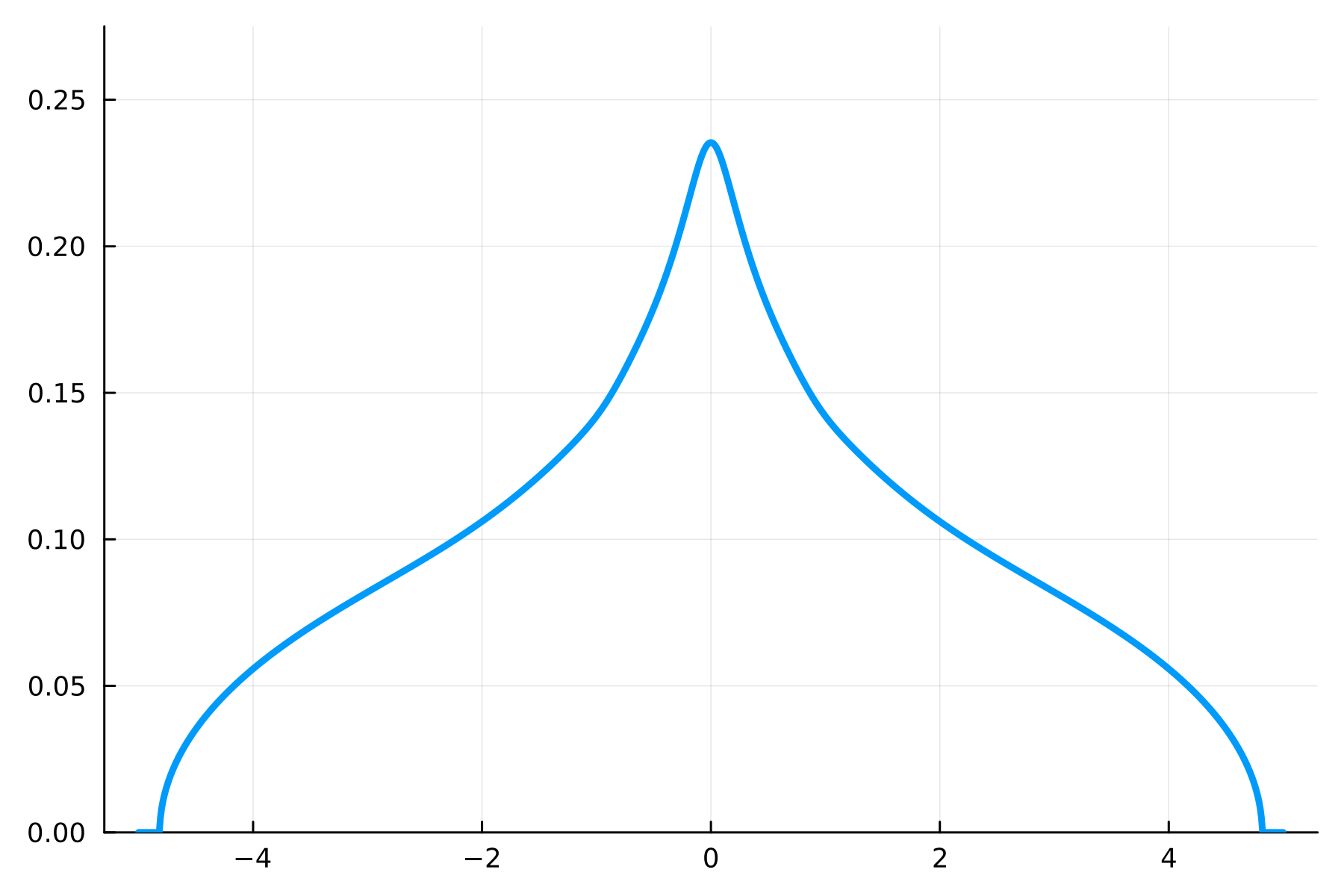}
    \caption{Analytic distribution $\mu_S$ of the matrix-valued semicircular element $S$ from \eqref{eq:matrix-full-semicircular}; computed numerically by applying the Stieltjes inversion formula to its Cauchy transform $g$.}
    \label{fig:full_matrix_dist}
\end{figure}

From this it is apparent that there is no atom at zero. In order to prove this rigorously we rely on \Cref{prop:theta_properties}.
\Cref{fig:full_matrix_shift} shows our calculated approximation $\tilde\theta$ of the function $\theta(y):=- y\Im g(iy)$ (with an error of strictly less than $1/12$) in logarithmic dependence of the distance $y$ from the real axis.

\begin{figure}[ht]
    \centering
    \includegraphics[width=8cm]{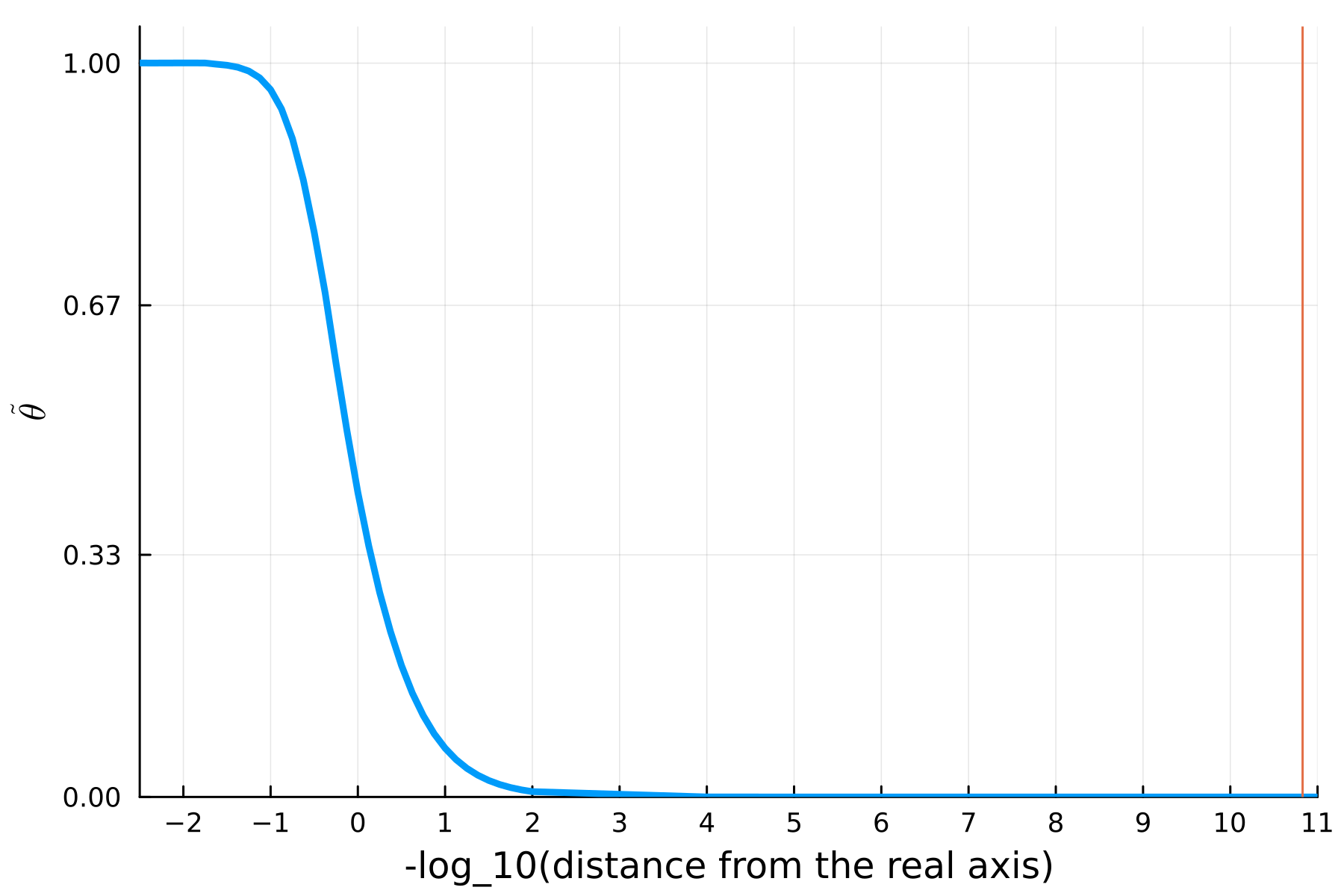}
    \caption{The calculated approximation $\tilde\theta$ of $\theta(y)$ for the matrix $A$ from \eqref{eq:matrix-full} with error $\vert\theta-\tilde\theta\vert<1/12$; the $y_0$ for which our \Cref{alg} calculates the value of $\tilde\theta$ is marked with the orange vertical line; the value there shows that the matrix $A$ is full; note that the horizontal axis shows $-\log_{10}(y)$ (thus $5$ represents a distance of $10^{-5}$ from $iy$ to the real axis).}
    \label{fig:full_matrix_shift}
\end{figure}
As soon as we fall with $\tilde\theta$ below $1/3-1/12=1/4$, we can be sure that there is no atom at zero, and thus the inner rank of $A$ is equal to 3. Note, however, that we do not have to calculate the whole plot as presented in \Cref{fig:full_matrix_shift}. Our algorithm tells us that, with $N=3$ and 
$$\eta(\1)=\begin{pmatrix}
    6 &0&1\\0&6&0\\1&0&2
\end{pmatrix},$$
we only have to do this for 
$$y_0:=\bigl(4e\Vert\eta(\1)\Vert\bigr)^{1/2-2N}\left(\frac 1 {4Ne}\right)^{1/2}\approx 10^{-11};$$
this position of $y_0$ is marked as the vertical line in orange in the plot.
We calculate the approximation $\tilde\theta$ of $\theta(y_0)$ at this position. This value $\tilde\theta$ is clearly below $1/4$ and thus this certifies that we have no atom and thus that the rank is 3. One should note, however, that this value of $y_0$ is actually much smaller than the value of $y\approx10^{-1}$, where we fall with $\tilde\theta$ below $1/4$. This suggests that our estimate for the behaviour of $\mu$ about zero, using the Fuglede-Kadison determinant, is much too conservative and has potential for improvements relying on control of the regularity parameters.

\subsection{An example of non-full rank}

We consider now the $4\times 4$-matrix
\begin{equation}\label{eq:matrix-non-full}
A=\begin{pmatrix}
 \fx_1 + 2\fx_4& \fx_1 + \fx_3 + \fx_4& \fx_1 - \fx_2 + \fx_4&          \fx_1\\
\fx_1 + \fx_3 + \fx_4&    \fx_1 + 2\fx_3& \fx_1 - \fx_2 + \fx_3& \fx_1 + \fx_3 - \fx_4\\
\fx_1 - \fx_2 + \fx_4& \fx_1 - \fx_2 + \fx_3&    \fx_1 - 2\fx_2& \fx_1 - \fx_2 - \fx_4\\
 \fx_1& \fx_1 + \fx_3 - \fx_4& \fx_1 - \fx_2 - \fx_4&    \fx_1 - 2\fx_4
\end{pmatrix}
\end{equation}
in four formal noncommuting variables $\fx_1,\fx_2,\fx_3,\fx_4$. This matrix $A$ is not full, but has inner rank 2. This can be seen by our approach as follows.

First, we have to decide whether $A$ is full or not. For this we have to decide whether the distribution $\mu$ of the corresponding matrix-valued semicircular element has an atom at zero or not. \Cref{fig:4x4_marked} shows the plot of our approximations $\tilde\theta$ for $\theta(y)$, with an error of strictly less than $1/16$. Again, we only have to calculate this for $y_0\approx 10^{-21}$. Since the value there is bigger than $3/16$, this certifies that there is an atom at zero. Since the value is also smaller than $3/4-1/16$, the size of the atom cannot be $3/4$ or bigger. Hence we are left with the two possibilities that the size of the atom is $1/4$ or $1/2$, that is that the rank of $A$ is either 3 or 2.

\begin{figure}[ht]
    \centering
    \includegraphics[width=8cm]{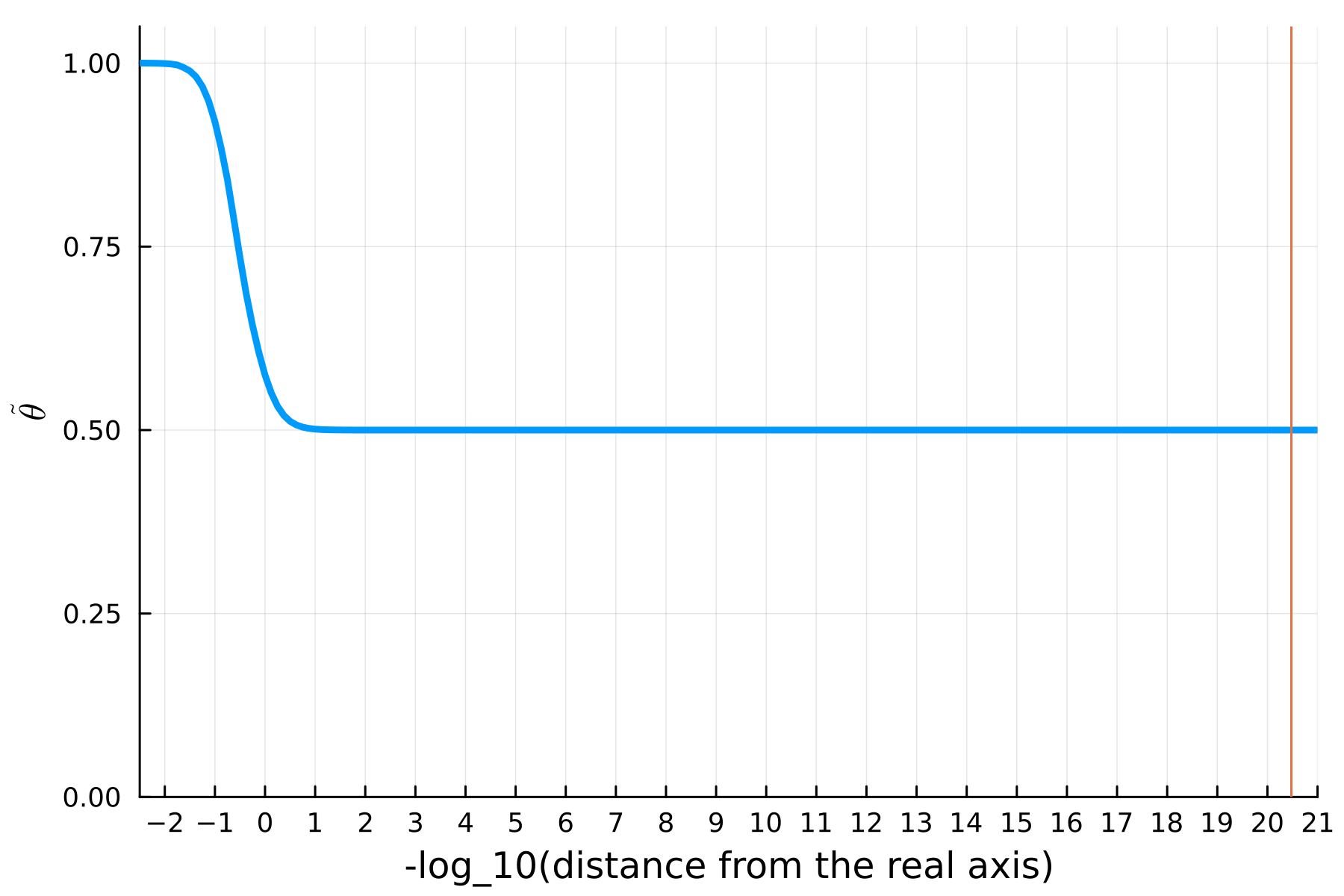}
    \caption{The calculated approximation $\tilde\theta$ of $\theta(y)$ for the matrix $A$ from \eqref{eq:matrix-non-full} with error $\vert\theta-\tilde\theta\vert<1/16$; the $y_0$ for which our \Cref{alg} calculates the value of $\tilde\theta$ is marked with the orange vertical line; the value there shows that the matrix $A$ is not full; more precisely, the only possible values for the rank, which are compatible with the value of $\tilde\theta$, are 3 or 2.}
    \label{fig:4x4_marked}
\end{figure}

Though the whole plot and also the value of $\tilde\theta$ at this $y_0$ suggest very convincingly that the size of the atom should be $1/2$, our present theoretical estimates do not allow us to distinguish between the two cases by just relying on this $\tilde\theta$.

In order to get a certificate for deciding between the two possibilities we now use the procedure as described in the appendix of \cite{GGOW19}. Namely, in order to check whether $A$ has rank 3 we should consider the $5\times 5$-matrix, where $A$ is enlarged with an extra row and column with new formal variables, that is we consider 
\begin{equation}
B=\begin{pmatrix}\label{eq:matrix-non-full-B}
\fx_5&\fx_6&\fx_7&\fx_8&\fx_9\\
 \fx_6&\fx_1 + 2\fx_4& \fx_1 + \fx_3 + \fx_4& \fx_1 - \fx_2 + \fx_4&          \fx_1\\
\fx_7&\fx_1 + \fx_3 + \fx_4&    \fx_1 + 2\fx_3& \fx_1 - \fx_2 + \fx_3& \fx_1 + \fx_3 - \fx_4\\
\fx_8&\fx_1 - \fx_2 + \fx_4& \fx_1 - \fx_2 + \fx_3&    \fx_1 - 2\fx_2& \fx_1 - \fx_2 - \fx_4\\
 \fx_9&\fx_1& \fx_1 + \fx_3 - \fx_4& \fx_1 - \fx_2 - \fx_4&    \fx_1 - 2\fx_4
\end{pmatrix}.
\end{equation}
One has that $A$ has rank 3 if and only if $B$ has full rank, that is rank 5. \Cref{fig:5x5_marked} shows $\tilde\theta$ for this $B$ with the marked value of $y_0\approx 10^{-26}$. From this we see that $B$ has not full rank (actually we see that it has rank 4), so this means that $A$ cannot have rank 3, and thus it must have rank 2.

\begin{figure}[ht]
    \centering
    \includegraphics[width=8cm]{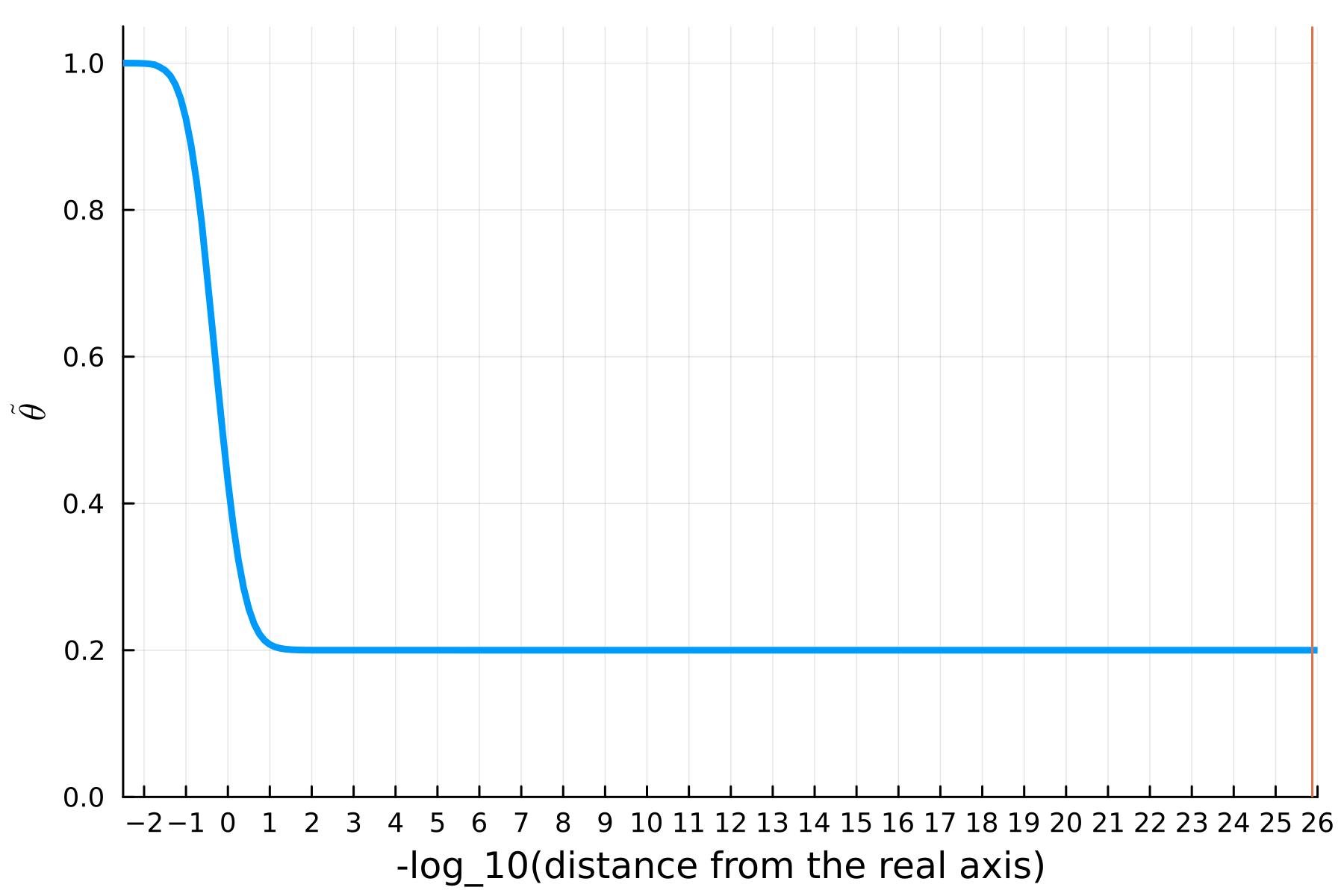}
    \caption{The calculated approximation $\tilde\theta$ of $\theta(y)$ for the matrix $B$ from \eqref{eq:matrix-non-full-B} with error $\vert\theta-\tilde\theta\vert<1/25$; the $y_0$ for which our \Cref{alg} calculates the value of $\tilde\theta$ is marked with the orange vertical line; the value there shows that the matrix $B$ is not full (more precisely, it shows that the rank of $B$ is 4); for the matrix $A$ from Equation \eqref{eq:matrix-non-full} this means that its rank cannot be 3, and hence must be 2.}
    \label{fig:5x5_marked}
\end{figure}

\section{Free probability as a bridge between the algebraic and the analytic problem}\label{sect:free_prob_bridge}

\subsection{Cauchy transform and Stieltjes inversion formula}\label{subsect:CauchyandStieltjes}

An important tool for the study of analytic distributions are \emph{Cauchy transforms} (also known under the name \emph{Stieltjes transforms}, especially in random matrix theory, but there usually with a different sign convention). For any finite Borel measure $\mu$ on the real line $\R$ (note that “measure” always means “positive measure”), the \emph{Cauchy transform of $\mu$} is defined as the holomorphic function
$$g_\mu:\ \C^+ \to \C^-,\quad z \mapsto \int_\R \frac{1}{z-t}\, d\mu(t),$$
where $\C^+$ and $\C^-$ denote the upper- and lower complex half-plane, respectively, that is, $\C^\pm := \{z\in\C \mid \pm \Im(z) > 0\}$.

Let $\mu$ be a finite Borel measure on the real line $\R$. It is well-known that one can recover $\mu$ with the help of the Stieltjes inversion formula from its Cauchy transform $g_\mu$.
More precisely, one has that the finite Borel measures $\mu_\epsilon$ defined by $d\mu_\epsilon(t) := -\frac{1}{\pi} \Im(g_\mu(t+i\epsilon))\, dt$ converge weakly to $\mu$ as $\epsilon\searrow 0$. Here, we aim at computing the value $\mu(\{0\})$ from the knowledge of (arbitrarily good approximations of) $g_\mu$; while we are mostly interested in the case of probability measures, we cover here the more general case of finite measures. To this end, we define the function
$$\theta_\mu:\ \R^+ \to \R^+,\quad y \mapsto - y \Im(g_\mu(iy))$$
on $\R^+ := (0,\infty)$. Note that $\theta_\mu(y) = \Re(iy g_\mu(iy))$ for all $y\in\R^+$. 

We start by collecting some properties of $\theta_\mu$:
\begin{proposition}\label{prop:theta_properties}
Let $\mu$ be a finite Borel measure on the real line $\R$. Then the following statements hold true:
\begin{enumerate}
 \item\label{prop:theta_properties-2} For each $y \in \R^+$ we have that $$\theta_\mu(y) = \int_\R \frac{y^2}{y^2 + t^2}\, d\mu(t) \quad\text{and}\quad \theta_\mu'(y) = \int_\R \frac{2 y t^2}{(y^2+t^2)^2}\, d\mu(t).$$
 In particular, the function $\theta_\mu$ is increasing.
 \item\label{prop:theta_properties-1} We have $\lim_{y \to \infty} \theta_\mu(y) = \mu(\R)$ and $\lim_{y \to 0} \theta_\mu(y) = \mu(\{0\})$.
 \item\label{prop:theta_properties-3} The function $\theta_\mu$ bounds $\mu(\{0\})$ from above, i.e., $\mu(\{0\})\leq\theta_\mu(y)$ for all $y\in\R^+$.
 \item We have that $\theta_\mu'(y) \leq \frac{\mu(\R)}{2y}$ for all $y \in \R^+$.
\end{enumerate}
\end{proposition}

\begin{proof}
\begin{enumerate}[wide]
    \item For each $y \in \R^+$, we have $g_\mu(iy) = \int_\R \frac{1}{iy-t}\, d\mu(t)$ and hence
    $$\Im(g_\mu(iy)) = \int_\R \Im\Big(\frac{1}{iy-t}\Big)\, d\mu(t) = -\int_\R \frac{y}{y^2 + t^2}\, d\mu(t).$$
    From this, we easily deduce the asserted integral representations of $\theta_\mu$ and $\theta_\mu'$; the latter tells us that $\theta_\mu$ is increasing.
    \item Since $\theta_\mu(y)=\Re(iy g_\mu(iy))$ for all $y\in \R^+$, these assertions are immediate consequences of the well-known facts that $\lim_{\sphericalangle z\to \infty} z g_\mu(z) = 1$ and $\lim_{\sphericalangle z \to 0} z g_\mu(z) = \mu(\{0\})$. More directly, these statements can be deduced from the integral representation of $\theta_\mu$ derived in \ref{prop:theta_properties-2} using Lebesgue's dominated convergence theorem; note that at each point $t \in \R$, the integrand satisfies $\frac{y^2}{y^2 + t^2} \to 1$ as $y\to\infty$ and $\frac{y^2}{y^2 + t^2} \to \1_{\{0\}}(t)$ as $y \searrow 0$.
    \item Follows from combining the monotonicity of $\theta_\mu$ established in \ref{prop:theta_properties-2} with the limit $\lim_{y \to 0} \theta_\mu(y) = \mu(\{0\})$ stated in \ref{prop:theta_properties-1}. Alternatively, one gets this by bounding the integral representation of $\theta_\mu$ from \ref{prop:theta_properties-2} using $\frac{y^2}{y^2 + t^2} \geq \1_{\{0\}}(t)$.
    \item By the inequality between the arithmetic and the geometric mean, we see that $\frac{2 y t^2}{(y^2+t^2)^2} \leq \frac{1}{2y}$ for each $y\in \R^+$ and all $t\in\R$. Thus, using the formula derived in \ref{prop:theta_properties-2} and since $\mu$ is a finite measure, we conclude that $\theta_\mu'(y) \leq \frac{\mu(\R)}{2y}$ for all $y \in \R^+$ as desired.
    \qedhere
\end{enumerate}
\end{proof}

In the applications we are interested in, $\mu$ is a Borel probability measure and we have access to $\theta_\mu(y)$ at any point $y\in\R^+$; we want to use this information to compute $\mu(\{0\})$. We will work in such situations where the possible values of $\mu(\{0\})$ are limited to a discrete subset of the interval $[0,1]$; then, thanks to \Cref{prop:theta_properties}, once we found $y\in \R^+$ for which $\theta_\mu(y)$ falls below the smallest positive value that $\mu(\{0\})$ can attain, we can conclude that $\mu(\{0\})$ must be zero. For this approach, however, we need some techniques to predict how small $y$ must be to decide reliably whether $\mu(\{0\})$ is zero or not.

\subsection{Distributions of regular type}
The problem is addressed in the following proposition for Borel probability measures $\mu$ of \emph{regular type}, i.e., those which are of the form $\mu = \mu(\{0\}) \delta_0 + \nu$ for some Borel measure $\nu$ satisfying
\begin{equation}\label{eq:nu_regularity}
\nu([-r,r]) \leq c r^\beta \qquad\text{for all $0<r<r_0$}
\end{equation}
with some $c \geq 0$, $\beta\in(0,1]$ and $r_0>0$; note that $\nu$ is necessarily finite with $\nu(\R) = 1-\mu(\{0\})$. We emphasize that \eqref{eq:nu_regularity} allows $c=0$; thus, the class of Borel probability measures of regular type includes those Borel probability measures whose support is contained in $\R \setminus (-r_0,r_0)$ for some $r_0>0$ except, possibly, an atom at zero. 

\begin{proposition}\label{prop:atom_vs_theta}
Let $\mu$ be a Borel probability measure on $\R$ which is of regular type such that \eqref{eq:nu_regularity} holds.
Let $\gamma:=\frac{2}{2+\beta}$ and $y_0:=r_0^{1/\gamma}$, then
$$\theta_\mu(y) - \mu(\{0\}) \leq (c+\nu(\R)) y^{\frac{2\beta}{2+\beta}} \qquad\text{for all $0<y<y_0$}.$$
\end{proposition}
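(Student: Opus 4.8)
The plan is to reduce everything to the integral representation $\theta_\mu(y) = \int_\R \frac{y^2}{y^2+t^2}\, d\mu(t)$ furnished by \Cref{prop:theta_properties} \ref{prop:theta_properties-2}, and then to exploit the decomposition $\mu = \mu(\{0\})\delta_0 + \nu$ together with a cut-off at a well-chosen scale.

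First I would plug $\mu = \mu(\{0\})\delta_0 + \nu$ into that integral representation. The $\delta_0$-part contributes $\mu(\{0\})$ (the integrand equals $1$ at $t=0$), so
$$\theta_\mu(y) - \mu(\{0\}) = \int_\R \frac{y^2}{y^2+t^2}\, d\nu(t),$$
and it remains to bound the right-hand side. (The regularity assumption \eqref{eq:nu_regularity} forces $\nu(\{0\}) = 0$, but this fact is not actually needed below.)

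Next, fix $y \in (0,y_0)$ and introduce the cut-off $\delta := y^\gamma$. Since $y < y_0 = r_0^{1/\gamma}$ we have $0 < \delta < r_0$, so \eqref{eq:nu_regularity} may be applied at radius $\delta$. I then split the integral at $|t| = \delta$. On $\{|t| \le \delta\}$ I bound the integrand crudely by $1$, which contributes at most $\nu([-\delta,\delta]) \le c\,\delta^\beta$. On $\{|t| > \delta\}$ I use $\frac{y^2}{y^2+t^2} \le \frac{y^2}{t^2} \le \frac{y^2}{\delta^2}$, which contributes at most $\frac{y^2}{\delta^2}\,\nu(\R)$. Altogether,
$$\theta_\mu(y) - \mu(\{0\}) \;\le\; c\,\delta^\beta + \frac{y^2}{\delta^2}\,\nu(\R) \;=\; c\,y^{\gamma\beta} + \nu(\R)\,y^{2-2\gamma}.$$

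Finally I would simplify the exponents using $\gamma = \frac{2}{2+\beta}$: one checks directly that $\gamma\beta = \frac{2\beta}{2+\beta}$ and $2 - 2\gamma = \frac{2\beta}{2+\beta}$, so both terms carry the same power $y^{2\beta/(2+\beta)}$ and the bound collapses to $(c+\nu(\R))\,y^{2\beta/(2+\beta)}$, as claimed. There is no genuine obstacle here; the only real design choice is the exponent $\gamma$ in the cut-off $\delta = y^\gamma$, and it is pinned down precisely by the requirement that the near-zero term $\delta^\beta$ and the tail term $y^2/\delta^2$ be of matching order in $y$, which is exactly the optimization $\beta\gamma = 2 - 2\gamma$.
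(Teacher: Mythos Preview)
Your proof is correct and follows essentially the same approach as the paper: both reduce to $\theta_\mu(y)-\mu(\{0\})=\theta_\nu(y)$, split the integral representation of $\theta_\nu(y)$ at the cut-off $y^\gamma$, bound the inner part by $\nu([-y^\gamma,y^\gamma])\le c\,y^{\gamma\beta}$ via \eqref{eq:nu_regularity} and the outer part by $(y/y^\gamma)^2\,\nu(\R)$, and then observe that $\gamma\beta=2-2\gamma=\frac{2\beta}{2+\beta}$. Your closing remark explaining that $\gamma$ is pinned down by matching the two exponents is a nice addition not made explicit in the paper.
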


\begin{proof}
First of all, we note that $g_\mu(z) = \mu(\{0\}) \frac{1}{z} + g_\nu(z)$ for all $z\in\C^+$ and hence $\theta_\mu(y) = \mu(\{0\}) + \theta_\nu(y)$ for all $y\in\R^+$.

Now for all $0<y<y_0$ we have $r:=y^\gamma<y_0^\gamma=r_0$, so we get
\begin{align*}
\int_{[-r,r]} \frac{y^2}{y^2+t^2}\, d\nu(t) &\leq \nu([-r,r])  \leq c r^\beta = c y^{\frac{2\beta}{2+\beta}},\\
\int_{\R \setminus [-r,r]} \frac{y^2}{y^2+t^2}\, d\nu(t) &\leq \nu(\R) \Big(\frac{y}{r}\Big)^2 = \nu(\R) y^{\frac{2\beta}{2+\beta}},
\end{align*}
and hence, using \Cref{prop:theta_properties} \ref{prop:theta_properties-2}, that $\theta_\nu(y) \leq (c+\nu(\R)) y^{\frac{2\beta}{2+\beta}}$.

Combining both results, we obtain the asserted bound.
\end{proof}

\subsection{Matrix-valued semicircular elements}

From \cite{SS2015,MSY2023}, we learn that Borel probability measures of regular type arise naturally as analytic distributions of matrix-valued semicircular elements.
This class of operators allows us to bridge the gap between the algebraic problem of determining the inner rank of linear matrix pencils over the ring of noncommutative polynomials and the analytic tools originating in free probability theory.

To this end, we need to work in the setting of tracial $W^\ast$-probability spaces. Therefore, we outline some basic facts about von Neumann algebras; fore more details, we refer to \cite{Takesaki1979,Blackadar2006}, for instance. Let us recall that a \emph{von Neumann algebra} $\M$ is a unital $\ast$-subalgebra of $B(H)$ which is closed with respect to the weak, or equivalently, with respect to the strong operator topology. Since these topologies are weaker than the topology induced by the norm $\|\cdot\|$ on $B(H)$, every von Neumann algebra $\M$ is in particular a unital $C^\ast$-algebra.
We call $(\M,\tau)$ a \emph{tracial $W^\ast$-probability space} if $\M \subseteq B(H)$ is a von Neumann algebra and $\tau: \M \to \C$ a state, which is moreover
\begin{itemize}
    \item
        \emph{normal} (i.e., according to the characterization of normality given in \cite[Theorem III.2.1.4]{Blackadar2006}, the restriction of $\tau$ to the unit ball $\{x\in \M:\|x\|\leq 1\}$ of $\M$ is continuous with respect to the weak, or equivalently, with respect to the strong operator topology),
    \item
        \emph{faithful} (i.e., if $x\in \M$ is such that $\tau(x^\ast x) = 0$, then $x=0$), and 
    \item
        \emph{tracial} (i.e., we have $\tau(xy) = \tau(yx)$ for all $x,y\in \M$).
\end{itemize}

To any selfadjoint noncommutative random variable $x\in\M$, we associate the Borel probability measure $\mu_x$ on the real line $\R$ which is uniquely determined by the requirement that $\mu$ encodes the moments of $x$, i.e., that $\tau(x^k) = \int_\R t^k\, d\mu(t)$ holds for all $k\in\N_0$; we call $\mu_x$ the \emph{analytic distribution of $x$}.

Of particular interest are \emph{(standard) semicircular elements} as they constitute the free counterpart of normally distributed random variables in classical probability theory; more precisely, those are selfadjoint noncommutative random variables $s\in \M$ whose analytic distribution is the \emph{semicircular distribution}, i.e., we have $d\mu_s(t) = \frac{1}{2\pi} \sqrt{4-t^2} \1_{[-2,2]}(t)\, dt$. We will in the following consider $n$ freely independent semicircular elements $s_1,\dots,s_n$. For the definition of free independence  (which is the noncommutative analogue of the notion of independence), see \Cref{subsect:scalar-valued-case}.

In the next theorem, we summarize the crucial results from \cite{SS2015,MSY2023}.

\begin{theorem}\label{thm:rank-atom}
Let $s_1,\dots,s_n$ be freely independent standard semicircular elements in some tracial $W^\ast$-probability space $(\M,\tau)$. Consider further some selfadjoint matrices $a_1,\dots,a_n$ in $M_N(\C)$. We define the operator $$S = a_1 \otimes s_1 + \ldots + a_n \otimes s_n$$ which lives in the tracial $W^\ast$-probability space $(M_N(\C) \otimes \M, \tr_N \otimes \tau)$. Then the following statements hold true:
\begin{enumerate}
 \item The analytic distribution $\mu_S$ of $S$ is of regular type.
 \item The only possible values of $\mu_S(\{0\})$ are $\{\frac{k}{N} \mid k=0,1,\dots,N\}$. 
 \item\label{it:thm-rank-atom-iii} The inner rank $\rank(A)$ of the linear pencil $$A := a_1 \otimes \fx_1 + \ldots + a_n \otimes \fx_n$$ over the ring $\C\langle \fx_1,\dots,\fx_n\rangle$ of noncommutative polynomials in $n$ formal noncommuting variables $\fx_1,\dots,\fx_n$ is given by $$\rank(A) = N\big(1-\mu_S(\{0\})\big).$$
\end{enumerate}
\end{theorem}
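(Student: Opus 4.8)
The plan is to identify three a priori different quantities: the algebraic inner rank of $A$ over $\C\langle\fx_1,\dots,\fx_n\rangle$, the inner rank of $A$ over the free skew field, and an analytically defined rank of the operator $S$ sitting inside the $\ast$-algebra of operators affiliated with $\M$. These get glued together by the observation that, once $M_N(\C)\otimes\M$ carries the normal faithful trace $\tr_N\otimes\tau$, the selfadjoint operator $S$ has analytic distribution $\mu_S$ with $\mu_S(\{0\})=(\tr_N\otimes\tau)\big(\chi_{\{0\}}(S)\big)$; so computing $\mu_S(\{0\})$ amounts to computing the trace of the kernel projection of $S$. (For context, with $\E:=\id_{M_N(\C)}\otimes\tau$ the operator $S$ is in fact a centered $M_N(\C)$-valued semicircular element with covariance $\eta(b)=\sum_{i=1}^n a_i b a_i$, which is what links $\mu_S$ to the equation \eqref{eq:operator-semi}; this is not needed for the present theorem but motivates the algorithm.)

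For part (i), I would appeal to the regularity theory for analytic distributions of matrix-valued semicircular elements from \cite{SS2015,MSY2023}. Since $A$ is already a linear pencil, $S$ is a genuine matrix semicircular element and no linearization is needed; its scalar distribution splits as $\mu_S=\mu_S(\{0\})\,\delta_0+\nu$, where the continuous remainder $\nu$ has locally bounded density near the origin (at worst a square-root type singularity), hence a power estimate $\nu([-r,r])\le c\,r^\beta$ for all small $r$ with some $c\ge0$ and $\beta\in(0,1]$. This is exactly condition \eqref{eq:nu_regularity}, so $\mu_S$ is of regular type. This is the one step where I would lean on the cited literature rather than give a self-contained argument.

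For part (iii), and hence (ii): by the nontrivial fact recalled in the introduction (which rests on Cohn's theory of the universal field of fractions of $\C\langle\fx_1,\dots,\fx_n\rangle$), the inner rank of $A$ over the polynomial ring equals its inner rank $r$ over the free skew field $\mathcal{U}$. The crucial input, again from \cite{SS2015,MSY2023}, is that the evaluation $\fx_i\mapsto s_i$ extends to a unital ring embedding $\Phi$ of $\mathcal{U}$ into the $\ast$-algebra $\mathcal{U}(\M)$ of operators affiliated with $\M$ — free semicirculars satisfy no nontrivial rational relations, every nonzero noncommutative rational function in them is an invertible affiliated operator, and their division closure is all of $\mathcal{U}$. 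Over the skew field $\mathcal{U}$, Gaussian elimination provides $U,V\in\mathrm{GL}_N(\mathcal{U})$ with $UAV=\operatorname{diag}(\1_r,\0_{N-r})$; applying $\Phi$ entrywise — it is automatically injective and maps $\mathrm{GL}_N(\mathcal{U})$ into $\mathrm{GL}_N(\mathcal{U}(\M))$, since a unital homomorphism out of a skew field sends nonzero elements to units — and using $\Phi(A)=S$, we get invertibles $\widetilde U,\widetilde V$ over $\mathcal{U}(\M)$ with $\widetilde U\,S\,\widetilde V=\operatorname{diag}(\1_r,\0_{N-r})$ in $M_N(\C)\otimes\mathcal{U}(\M)$. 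The Sylvester rank function on matrices over $\mathcal{U}(\M)$ induced by the faithful trace $\tr_N\otimes\tau$ (normalized so that $\1_N$ has rank $N$) is invariant under multiplication by invertibles, so the rank of $S$ equals that of $\operatorname{diag}(\1_r,\0_{N-r})$, namely $r$. On the other hand, since $S=S^\ast$, its support projection is $\1-\chi_{\{0\}}(S)$, so that same rank is $N\big(1-(\tr_N\otimes\tau)(\chi_{\{0\}}(S))\big)=N\big(1-\mu_S(\{0\})\big)$. Equating the two gives $\rank(A)=r=N\big(1-\mu_S(\{0\})\big)$, which is (iii); and since $r$ is an integer with $0\le r\le N$, assertion (ii) follows.

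The main obstacle is exactly the step that forces one to invoke \cite{SS2015,MSY2023}: the embedding $\mathcal{U}\hookrightarrow\mathcal{U}(\M)$ together with the compatibility of rank functions — one needs both that free semicirculars generate a copy of the free skew field inside the affiliated operators (absence of zero divisors / rational relations) and that the rank this copy inherits from the trace is precisely the linear-algebra rank over $\mathcal{U}$. A secondary subtlety is that inner rank is only sub-multiplicative under ring homomorphisms in general, which is why one transports the full diagonalization over $\mathcal{U}$, not merely a factorization of $A$, so as to pin down the exact value $r$ on the analytic side. The remaining ingredients (recognizing $S$ as a matrix semicircular element, the spectral-calculus identity for $\mu_S(\{0\})$, and the rank normal form over a skew field) are routine.
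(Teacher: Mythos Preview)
Your overall strategy matches the paper's: both parts (ii) and (iii) are delegated to \cite{SS2015,MSY2023}, and your expanded sketch of (iii) --- pass to the free skew field $\mathcal{U}$, use that free semicirculars realize $\mathcal{U}$ inside the affiliated operators, transport a rank-normal form through the embedding, and read off the rank as the trace of the support projection --- is a faithful unpacking of what \cite[Theorem~5.21]{MSY2023} says. The paper itself simply cites that theorem for (iii) and \cite[Theorem~1.1(2)]{SS2015} for (ii).

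The one place your write-up diverges from the paper is part (i), and there your justification is not what the cited sources actually deliver. You assert that the continuous remainder $\nu$ ``has locally bounded density near the origin (at worst a square-root type singularity)''. Neither \cite{SS2015} nor \cite{MSY2023} proves a density statement of this form at an arbitrary interior point, and in general the density of a matrix-valued semicircular can exhibit worse local behavior than a square-root edge. The paper's argument avoids density altogether: it invokes \cite[Theorem~5.4]{SS2015} to get that the Cauchy transform of $\mu_{S^2}$ is \emph{algebraic}, then \cite[Lemma~5.14]{SS2015} to conclude that the Novikov--Shubin invariant $\alpha(S^2)$ is either a positive rational or $\infty^+$. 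In the first case one reads off directly from the definition of $\alpha(S^2)$ a bound $\mu_{S^2}((0,t])\le t^\alpha$ for small $t$, which translates to $\nu([-r,r])\le r^{2\alpha}$; in the second case $0$ is isolated in the spectrum. Either way $\mu_S$ is of regular type. So your conclusion is right, but the mechanism you propose (density bounds) should be replaced by the algebraicity/Novikov--Shubin route if you want the argument to rest on what is actually proved in the references.
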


Before giving the proof of \Cref{thm:rank-atom}, we recall that the \emph{Novikov-Shubin invariant} $\alpha(x) \in [0,\infty] \cup \{\infty^+\}$ of a positive operator $x$ in some tracial $W^\ast$-probability space $(\M,\tau)$ is defined as
$$\alpha(x) := \liminf_{t\searrow 0} \frac{\log\big(\mu_x([0,t]) - \mu_x(\{0\})\big)}{\log(t)} \in [0,\infty]$$
if $\mu_x([0,t]) > \mu_x(\{0\})$ is satisfied for all $t>0$ and as $\alpha(x) := \infty^+$ otherwise. We emphasize that $\infty^+$ is nothing but a (reasonable) notation which is used to distinguish the case of an isolated atom at $0$.

\begin{proof}
\begin{enumerate}[wide]
    \item It follows from \cite[Theorem 5.4]{SS2015} that the Cauchy transform of $\mu_{S^2}$, the analytic distribution of the positive operator $S^2$, is algebraic
    (which also follows from Anderson’s result on preservation of algebraicity \cite{anderson2014preservation}). 
    Hence, \cite[Lemma 5.14]{SS2015} yields that the Novikov-Shubin invariant $\alpha(S^2)$ of $S^2$ is either a non-zero rational number or $\infty^+$.

    Let us consider the case $\alpha(S^2)=\infty^+$ first. Then $0$ is an isolated point of the spectrum of $S^2$. Using spectral mapping, we infer that $0$ is also an isolated point in the spectrum of $S$. Thus, $\mu_S$ decomposes as $\mu_S = \mu_S(\{0\}) \delta_0 + \nu$, where $\nu$ is supported on $\R\setminus (-r_0,r_0)$ for some $r_0>0$ and we conclude that $\mu_S$ is of regular type.
    
    Now, we consider the case $\alpha(S^2) \in \Q \cap (0,\infty)$. Choose any $0 < \alpha < \alpha(S^2)$. From the definition of $\alpha(S^2)$, we infer that $\mu_{S^2}$ satisfies $\mu_{S^2}((0,t]) = \mu_{S^2}([0,t]) - \mu_{S^2}(\{0\}) \leq t^\alpha$ for sufficiently small $t$, say for $0<t<t_0$. Since $\mu_S([-r,r]) = \mu_{S^2}([0,r^2])$ and $\mu_S(\{0\}) = \mu_{S^2}(\{0\})$, we infer that $\mu_S$ satisfies $\mu_S([-r,r]) - \mu_S(\{0\}) \leq r^{2\alpha}$ for all $0<r<r_0$ with $r_0 := t_0^{1/2}$. If we set $\nu := \mu_S - \mu_S(\{0\}) \delta_0$, then $\nu([-r,r]) \leq r^\beta$ for all $0<r<r_0$ with $\beta := 2\alpha$; thus, $\mu_S$ again is of regular type.
    \item This follows by an application of Theorem 1.1 (2) in \cite{SS2015}.
    \item The formula for the inner rank can be deduced from Theorem 5.21 in \cite{MSY2023}; see also \Cref{rem:rank-atom_generalization} below.\qedhere
\end{enumerate}
\end{proof}

\begin{remark}\label{rem:rank-atom_generalization}
From \cite{MSY2023}, we learn that the conclusion of \Cref{thm:rank-atom} \ref{it:thm-rank-atom-iii} is not at all limited to $n$-tuples of freely independent standard semicircular elements. Actually, we may replace $(s_1,\dots,s_n)$ by any $n$-tuple $(x_1,\dots,x_n)$ of selfadjoint operators in a tracial $W^\ast$-probability space $(\M,\tau)$ which satisfy the condition $\Delta(x_1,\dots,x_n) = n$, where $\Delta(x_1,\dots,x_n)$ is the quantity introduced by Connes and Shlyakhtenko in \cite[Section 3.1.2]{CS2005}. For any such $n$-tuple $(x_1,\dots,x_n)$, \cite[Theorem 5.21]{MSY2023} yields that
$$\rank(A) = N\big(1-\mu_X(\{0\})\big),$$
where $A=a_1 \otimes \fx_1 + \ldots + a_n \otimes \fx_n$ is a linear pencil over $\C\langle \fx_1,\dots,\fx_n\rangle$ with selfadjoint coefficients $a_1,\dots,a_n \in M_N(\C)$ and $X:=A(x_1,\dots,x_n)$ is a selfadjoint noncommutative random variable in $(M_N(\C) \otimes \M, \tr_N \otimes \tau)$. For the sake of completeness, though this is not relevant for our purpose, we stress that even the restriction to linear pencils is not necessary as the result remains true for all selfadjoint $A\in M_N(\C) \otimes \C\langle \fx_1,\dots,\fx_n\rangle$; see \cite[Corollary 5.15]{MSY2023}.
\end{remark}

It might appear now that the fact that our matrix-valued semicircular elements $S$ are of regular type solves the problem of determining the mass of the atom of $\mu_S$ at zero.
And indeed, if we know all of $c$, $\beta$, and $r_0$, then we can compute $\mu_S(\{0\})$ exactly: choose a $y$ such that
\[
    0
    <y
    <\min\left\{r_0^{\frac{2+\beta}{2}},\left(\frac{1}{4N(c+1)}\right)^{\frac{2+\beta}{2\beta}}\right\}.
\]
Since $y<y_0:=r_0^{\frac{2+\beta}{2}}$, we can apply \Cref{prop:atom_vs_theta} and get
\[
    \theta_{\mu_S}(y)-\mu_S(\{0\})
    \leq(c+\nu(\R))y^{\frac{2\beta}{2+\beta}}
    \leq(c+1)y^{\frac{2\beta}{2+\beta}}
    <\frac{1}{4N}.
\]
With this $y$ and $\epsilon:=\frac{1}{4N}$ we can now compute $\tilde{\theta}$ such that by \Cref{cor:theta_approximation} we have
\[
    |\theta_{\mu_S}(y)-\tilde{\theta}|
    <\frac{1}{4N}.
\]
Combining these two inequalities via the triangle inequality, we get
\[
    |\mu_S(\{0\})-\tilde{\theta}|
    <\frac{1}{2N}.
\]
Calling on \Cref{thm:rank-atom} again, we know that $\mu_S(\{0\})$ has to be a value from the discrete set $\{\frac{k}{N}\mid k=0,1,\ldots,N\}$.
In particular, $N\tilde{\theta}$ has distance less than $\frac{1}{2}$ to the integer $N\mu_S(\{0\})$.
But then $\mu_S(\{0\})=\frac{1}{N}[N\tilde{\theta}]$, where $[m]$ denotes the integer closest to $m$.

Applying \Cref{thm:rank-atom} one last time, we finally get
\[
    \rank(A)
    =N(1-\mu_S(\{0\}))
    =N-[N\tilde{\theta}].
\]

However, even though we know that our relevant distributions $\mu_S$ are all of regular type, we have no general control on their parameters.
We hope to achieve further progress on these questions in the future.
But right now, \Cref{prop:atom_vs_theta} cannot be used for the determination of a value $y_0$ such that the calculation of $\theta(y_0)$ would give a certificate for the size of the atom at zero, or equivalently, for the rank of $A$. In the next section we will thus try to use another regularity quantity.

\subsection{Fuglede-Kadison determinant}

To begin with, let us give the following definition taken from \cite{FK1952}.

\begin{definition}\label{def:Fuglede-Kadison}
Let $(\M,\tau)$ be a tracial $W^*$-probability space. For a (not necessarily selfadjoint) invertible operator $x\in\M$ its \emph{Fuglede-Kadison determinant} is defined by
$$\Delta(x):=\exp\bigg(\int_{(0,\|x\|]} \log(t) d\mu_{|x|}(t)\bigg)\in [0,\infty),$$
where $|x|:=(x^\ast x)^{1/2}$ and $\mu_{|x|}$ denotes the analytic distribution of $|x|$.
\end{definition}

\begin{lemma}\label{lem:FK-distribution_near_zero}
Let $x=x^\ast\in \M$ be invertible. For all $0<\epsilon<\|x\|$, we have
$$\mu_x([-\epsilon,\epsilon]) \leq \frac{\log(\|x\|) - \log(\Delta(x))}{\log(\|x\|) - \log(\epsilon)}.$$
\end{lemma}

\begin{proof}
First, we assume that $x > 0$. Take any $0<\epsilon<\|x\|$. We have
\begin{align*}
\log (\Delta(x)) &= \int_{(0,\epsilon)} \log(t) d\mu_{|x|}(t) + \int_{[\epsilon,\|x\|]} \log(t) d\mu_{|x|}(t)\\
&\leq \mu_{|x|}((0,\epsilon]) \log(\epsilon) + \big(1-\mu_{|x|}((0,\epsilon])\big) \log(\|x\|).
\end{align*}
By solving the latter for $\mu_{|x|}((0,\epsilon])$, we get the asserted estimate in this case.

For invertible $x=x^\ast$, we apply the previous result to $x^2 > 0$; since $\mu_x([-\epsilon,\epsilon]) = \mu_{x^2}((0,\epsilon^2])$ for all $\epsilon>0$, $\|x^2\| = \|x\|^2$, and $\Delta(x^2) = \Delta(x)^2$, we see that the asserted estimate holds true in full generality. 
\end{proof}

\begin{corollary}\label{cor:FK-distribution_near_zero}
Let $x=x^\ast\in \M$ be invertible and $\delta>0$.
\begin{enumerate}
 \item\label{FK-distribution_near_zero-i} If $0 < \epsilon < \|x\|\Big(\frac{\Delta(x)}{\|x\|}\Big)^{1/\delta}$, then $\mu_x([-\epsilon,\epsilon]) < \delta$.
 \item\label{FK-distribution_near_zero-ii} If $0 < y < \|x\| \Big(\frac{\Delta(x)}{\|x\|}\Big)^{2/\delta} \Big(\frac{\delta}{2}\Big)^{1/2}$, then $\theta_{\mu_x}(y) < \delta$.
\end{enumerate}
\end{corollary}

\begin{proof}
Note that $\Delta(x) \leq \|x\|$; thus, \ref{FK-distribution_near_zero-i} is an immediate consequence of the estimate provided in \Cref{lem:FK-distribution_near_zero}. For the proof of \ref{FK-distribution_near_zero-ii}, we proceed as follows. For $0 < y < \|x\| \Big(\frac{\Delta(x)}{\|x\|}\Big)^{2/\delta} \Big(\frac{\delta}{2}\Big)^{1/2}$ and $0 < \epsilon < \epsilon_0 := \|x\|\Big(\frac{\Delta(x)}{\|x\|}\Big)^{2/\delta}$; by \ref{FK-distribution_near_zero-i}, we get like in the proof of \Cref{prop:atom_vs_theta} that
\begin{align*}
\int_{[-\epsilon,\epsilon]} \frac{y^2}{y^2+t^2}\, d\mu_x(t) &\leq \mu_x([-\epsilon,\epsilon]) < \frac{\delta}{2},\\
\int_{\R \setminus [-\epsilon,\epsilon]} \frac{y^2}{y^2+t^2}\, d\mu_x(t) &\leq \Big(\frac{y}{\epsilon}\Big)^2, 
\end{align*}
and hence, by using \Cref{prop:theta_properties} \ref{prop:theta_properties-2}, that $\theta_{\mu_x}(y) \leq \frac{\delta}{2} + \big(\frac{y}{\epsilon}\big)^2$. By letting $\epsilon\nearrow \varepsilon_0$, we obtain $\theta_{\mu_x}(y) \leq \frac{\delta}{2} + \big(\frac{y}{\epsilon_0}\big)^2 < \delta$, as asserted.
\end{proof}

The usefulness of the Fuglede-Kadison determinant in our context comes from the following uniform lower estimate for $\Delta(S)$ from \cite{MaiSpeicher2024}:

\begin{theorem}[Corollary 1.4, \cite{MaiSpeicher2024}]\label{thm:estimate-on-FKdet}
For integer matrices $a_j\in M_N(\Z)$, consider the matrix-valued semicircular operator
$S=\sum_{i=1}^n a_i\otimes s_i$.
If $S$ is invertible as an unbounded operator, then we have for its Fuglede-Kadison determinant that
$\Delta(S)\geq e^{-\frac{1}{2}}$.
\end{theorem}

Combining \Cref{cor:FK-distribution_near_zero} with \Cref{thm:estimate-on-FKdet} and the fact \cite{MS17} that $\Vert S\Vert \leq 2\Vert\eta\Vert^{1/2}$ (and that $\Vert\eta\Vert=\Vert\eta(\1)\Vert$ for the positive map $\eta$) gives then our following key estimate.

\begin{corollary}\label{cor:key}
Consider, for selfadjoint integer matrices $a_j\in M_N(\Z)$, the matrix-valued semicircular operator
$S=\sum_{i=1}^n a_i\otimes s_i$. Consider $0<\delta<2$ and put
   $$y_0:=\bigl(4e\Vert\eta(\1)\Vert\bigr)^{1/2-1/\delta}\left(\frac\delta {2e}\right)^{1/2}.$$ 
   If $S$ is invertible as an unbounded operator, then
 $\theta_S(y_0)\leq \delta$.
\end{corollary}

Let us summarize our observations. By \Cref{thm:rank-atom}, the problem of determining the inner $\rank(A)$ of a selfadjoint linear pencil $A = a_1 \otimes \fx_1 + \ldots + a_n \otimes \fx_n$ in $M_N(\C) \otimes \C\langle \fx_1,\dots,\fx_n\rangle$ has been reduced to the computation of $\mu_S(\{0\}) \in \{\frac{k}{N} \mid k=0,1,\dots,N\}$ for the matrix-valued semicircular element $S = a_1 \otimes s_1 + \ldots + a_n \otimes s_n$. \Cref{cor:FK-distribution_near_zero} yields upper bounds for $\mu_S(\{0\})$ in terms of $\theta_{\mu_S}(y)$ for sufficiently small $y>0$. By definition of $\theta_{\mu_S}$, we thus have to compute $g_{S}$ --- or at least approximations thereof with good control on the approximation error. This goal is achieved in the \Cref{sect:approximation-of-Cauchy-transforms} for general operator-valued semicircular elements by making use of results of \cite{HRFS2007}. Note that in the particular case of matrix-valued semicircular elements $S$ which we addressed above, one could alternatively use the more general operator-valued subordination techniques from \cite{BMS2017,HMS2018,Mai2017} since $(a_1\otimes s_1,\dots,a_n \otimes s_n)$ are freely independent with amalgamation over $M_N(\C)$; this approach, however, becomes computationally much more expensive as $n$ grows.

\appendix

\section{A brief introduction to operator-valued free probability theory}\label{sect:intro_free_prob}

Free probability theory, both in the scalar- and the operator-valued case, uses the language of noncommutative probability theory. While many fundamental concepts can be discussed already in some purely algebraic framework, it is more appropriate for our purposes to work in the setting of $C^\ast$-probability spaces. Thus, we stick right from the beginning to this framework. For a comprehensive introduction to the subject of $C^\ast$-algebras, we refer for instance to \cite[Chapter II]{Blackadar2006} and the references listed therein; for the readers' convenience, we recall the terminology used in the sequel.

By a \emph{$C^\ast$-algebra}, we mean a (complex) Banach $\ast$-algebra $\A$ in which the identity $\|x^\ast x\| = \|x\|^2$ holds for all $x\in\A$; if $\A$ has a unit element $\1$, we call $\A$ a \emph{unital $C^\ast$-algebra}. A continuous linear functional $\phi: \A \to \C$ which is \emph{positive} (meaning that $\phi(x^\ast x) \geq 0$ holds for all $x\in\A$) and satisfies $\|\phi\| = 1$ is called a \emph{state}.
According to the Gelfand-Naimark theorem (see \cite[Corollary II.6.4.10]{Blackadar2006}), every $C^\ast$-algebra $\A$ admits an isometric $\ast$-representation on some associated Hilbert space $(H,\langle\cdot,\cdot\rangle)$; hence $\A$ can be identified with a norm-closed $\ast$-subalgebra of $B(H)$. Each vector $\xi \in H$ of length $\|\xi\|=1$ induces then a state $\phi: \A \to \C, x\mapsto \langle x \xi,\xi\rangle$; such prototypical states are called \emph{vector states}.

\subsection{The scalar-valued case}\label{subsect:scalar-valued-case}

A \emph{$C^\ast$-probability space} is a tuple $(\A,\phi)$ consisting of a unital $C^\ast$-algebra $\A$ and some distinguished state $\phi: \A \to \C$ to which we shall refer as the \emph{expectation} on $\A$; elements of $\A$ will be called \emph{noncommutative random variables}.

Let $(\A_i)_{i\in I}$ be a family of unital subalgebras of $\A$. We say that $(\A_i)_{i\in I}$ are \emph{freely independent} if $\phi(x_1 \cdots x_n)=0$ holds for every choice of a finite number $n\in \N$ of elements $x_1,\dots,x_n$ with $x_j \in \A_{i_j}$ and $\phi(x_j)=0$ for $j=1,\dots,n$, where $i_1,\dots,i_n \in I$ are indices satisfying $i_1 \neq i_2, i_2 \neq i_3, \ldots, i_{n-1} \neq i_n$. A family $(x_i)_{i\in I}$ of noncommutative random variables in $\A$ is said to be \emph{freely independent} if the unital subalgebras generated by the $x_i$'s are freely independent in the aforementioned sense.

To any selfadjoint noncommutative random variable $x\in\A$, we associate the Borel probability measure $\mu_x$ on the real line $\R$ which is uniquely determined by the requirement that $\mu_x$ encodes the \emph{moments of $x$}, meaning that $\phi(x^k) = \int_\R t^k\, d\mu_x(t)$ holds true for all $k\in\N_0$; we call $\mu_x$ the \emph{(analytic) distribution of $x$}.

For the Cauchy transform $g_{\mu_x}$ of $\mu_x$ as defined in \Cref{subsect:CauchyandStieltjes}, we have $g_{\mu_x}(z) = \phi((z\1 - x)^{-1})$ for all $z\in\C^+$. Therefore, we shall write $g_x$ for $g_{\mu_x}$ and refer to it as the \emph{Cauchy transform of $x$}.

Of particular interest are \emph{(standard) semicircular elements} as they constitute the free counterpart of normally distributed random variables in classical probability theory; more precisely, those are selfadjoint noncommutative random variables $s\in \A$ whose analytic distribution is the \emph{semicircular distribution}, i.e., we have $d\mu_s(t) = \frac{1}{2\pi} \sqrt{4-t^2} \1_{[-2,2]}(t)\, dt$.

\subsection{The operator-valued case}

Roughly speaking, the step from the scalar- to the operator-valued case is done by allowing an arbitrary unital $C^\ast$-algebra $\B$ of $\A$ which is unitally embedded in $\A$ to take over the role of the complex numbers. Formally, an \emph{operator-valued $C^\ast$-probability space} is a triple $(\A,\E,\B)$ consisting of a unital $C^\ast$-algebra $\A$, a $C^\ast$-subalgebra $\B$ of $\A$ containing the unit element $\1$ of $\A$, and a \emph{conditional expectation} $\E: \A \to \B$; the latter means that $\E$ is \emph{positive} (in the sense that $\E$ maps positive elements in $\A$ to positive elements in $\B$) and further satisfies $\E[b] = b$ for all $b\in\B$ and $\E[b_1 x b_2] = b_1 \E[x] b_2$ for each $x\in\A$ and all $b_1,b_2\in\B$.

Also the notion of free independence admits a natural extension to the operator-valued setting. Let $(\A_i)_{i\in I}$ be a family of unital subalgebras of $\A$ with $\B \subseteq \A_i$ for $i\in I$. We say that $(\A_i)_{i\in I}$ are \emph{freely independent with amalgamation over $\B$} if $\E[x_1 \cdots x_n]=0$ holds for every choice of a finite number $n\in \N$ of elements $x_1,\dots,x_n$ with $x_j \in \A_{i_j}$ and $\E[x_j]=0$ for $j=1,\dots,n$, where $i_1,\dots,i_n \in I$ are indices satisfying $i_1 \neq i_2$, $i_2 \neq i_3$, \dots,$i_{n-1} \neq i_n$. A family $(x_i)_{i\in I}$ of noncommutative random variables in $\A$ is said to be \emph{freely independent with amalgamation over $\B$} if the unital subalgebras generated by $\B$ and the $x_i$'s are freely independent in the aforementioned sense.

In contrast to the scalar-valued case where it was possible to capture the moments of a single selfadjoint noncommutative random variables by a Borel probability measure on $\R$, such a handy description fails in the generality of operator-valued $C^\ast$-probability spaces.
Therefore, we take a more algebraic position. Let $\B\langle \fx \rangle$ be the $\ast$-algebra freely generated by $\B$ and a formal selfadjoint variable $\fx$. We shall refer to the $\B$-bimodule map $\mu_x: \B\langle \fx \rangle \to \B$ determined by 
$$\mu_x(\fx b_1 \fx \cdots \fx b_{k-1} \fx) := \E[x b_1 x \cdots x b_{k-1} x]$$
as the \emph{$\B$-valued noncommutative distribution of $x$}.

Let $\eta: \B \to \B$ be a positive linear map. A selfadjoint noncommutative random variable $s$ in $(\A,\E,\B)$ is called \emph{centered $\B$-valued semicircular element with covariance $\eta$} if
\begin{equation}\label{eq:opval_semicirculars_moments}
\mu_s(b_0 \fx b_1 \fx \cdots \fx b_{k-1} \fx b_k) = \sum_{\pi\in\NC_2(k)} \eta_\pi(b_0,b_1,\dots,b_{k-1},b_k)
\end{equation}
for all $k\in\N$ and $b_0,b_1,\dots,b_k\in\B$, where $\NC_2(k)$ denotes the set of all non-crossing pairings on $\{1,\dots,k\}$ and $\eta_\pi: \B^{k+1} \to \B$ is given by applying $\eta$ to its arguments according to the block structure of $\pi\in\NC_2(k)$. Note that $\NC_2(k)$ is empty if $k$ is odd; consequently, $\mu_s(b_1 \fx b_1 \fx \cdots \fx b_{k-1} \fx b_k) = 0$ whenever $k$ is odd. For even $k$, we can compute $\eta_\pi$ in a recursive way: every $\pi \in \NC_2(k)$ contains a block of the form $(p,p+1)$ for $1\leq p \leq k-1$; if $\pi' \in \NC_2(k-2)$ is obtained from $\pi$ by removing the block $(p,p+1)$, then
$$\eta_{\pi}(b_0,b_1,\dots,b_{k-1},b_k) = \eta_{\pi'}(b_0,\dots, b_{p-2}, b_{p-1} \eta(b_p) b_{p+1}, b_{p+2}, \dots, b_k).$$
Using this recursion, we obtain for example
$$\eta_{\{(1,4)(2,3),(5,6)\}}(b_0,b_1,b_2,b_3,b_4,b_5,b_6)=b_0\eta(b_1 \eta(b_2) b_3) b_4 \eta(b_5)b_6.$$

More generally, a \emph{$\B$-valued semicircular element with covariance $\eta$} means a selfadjoint noncommutative random variable $s$ in $(\A,\E,\B)$ for which its centered version $s^\circ := s - \E[s]$ gives a centered $\B$-valued semicircular element with covariance $\eta$.

\begin{example}
Let $s_1,\dots,s_n$ be freely independent standard semicircular elements living in some $C^\ast$-probability space $(\A,\phi)$. Furthermore, let $a_1,\dots,a_n \in M_N(\C)$ be selfadjoint. In the operator-valued $C^\ast$-probability space $(M_N(\C) \otimes \A, \E, M_N(\C))$, where $\E$ denotes the conditional expectation from $M_N(\C) \otimes \A$ to $M_N(\C) \subseteq M_N(\C) \otimes \A$ which is given by $\E := \id_{M_N(\C)} \otimes \phi$, we consider the noncommutative random variable
$$S = a_0 \otimes \1 + a_1 \otimes s_1 + \dots + a_n \otimes s_n.$$
We find that $S$ is an $M_N(\C)$-valued semicircular element with covariance $\eta: M_N(\C) \to M_N(\C)$ given by $\eta(b) = \sum^n_{i=1} a_i b a_i$ and mean $\E[S] = a_0$.
\end{example}

Notice that if $s$ is a (centered) $\B$-valued semicircular element in $(\A,\E,\B)$ with covariance $\eta$, then we have in particular $\eta(b) = \mu_s(\fx b \fx) = \E[sbs]$ for every $b\in\B$. This shows that $\eta: \B \to \B$ must be \emph{completely positive}, that is, not only $\eta$ but all its ampliations $\id_{M_n(\C)} \otimes \eta: M_n(\C) \otimes \B \to M_n(\C) \otimes \B$ for $n\in\N$ are positive linear maps.

Cauchy transforms can also be generalized to the operator-valued realm where they provide an enormously useful analytic tool. For any selfadjoint noncommutative random variable $x\in\A$, we define the \emph{$\B$-valued Cauchy transform of $x$} by
$$G_x^\B:\ \H^+(\B) \to \H^-(\B),\quad b \mapsto \E\big[(b-x)^{-1}\big],$$
where $\H^+(\B)$ and $\H^-(\B)$ are the upper and lower half-plane in $\B$, respectively, i.e., $\H^\pm(\B) := \{b\in \B \mid \exists \epsilon>0: \pm \Im(b) \geq \epsilon \1\}$ where we set $\Im(b) := \frac{1}{2i} (b - b^\ast)$; notice that $b\in \B$ belongs to $\H^\pm(\B)$ if and only if $\pm \Im(b)$ is an invertible positive element in $\B$. We recall that we have
\begin{equation}\label{eq:resolvent_bound}
\|(b-x)^{-1}\| \leq \|\Im(b)^{-1}\|
\end{equation}
and hence
\begin{equation}\label{eq:Cauchy_bound-1}
\|G_x^\B(b)\| \leq \|\Im(b)^{-1}\|
\end{equation}
for all $b\in \H^+(\B)$. Whenever the underlying $C^\ast$-algebra $\B$ is clear from the context, we will simply write $G_x$ instead of $G_x^\B$.

If $\phi$ is any state on $\B$, then $(\A,\E,\B)$ induces the scalar-valued $C^\ast$-probability space $(\A,\phi \circ \E)$. Accordingly, each operator-valued noncommutative random variable $x=x^\ast \in \A$ can also be viewed as a scalar-valued noncommutative random variable; its scalar-valued Cauchy transform $g_x: \C^+ \to \C^-$ is determined by its $\B$-valued Cauchy transform $G_x: \H^+(\B) \to \H^-(\B)$ through $g_x(z) = \phi(G_x(z\1))$ for all $z\in \C^+$.

We know from \cite[Theorem 4.1.12]{S1998} that the $\B$-valued Cauchy transform $G_s: \H^+(\B) \to \H^-(\B)$ of a centered operator-valued semicircular element with covariance $\eta$ solves the equation
\begin{equation}\label{eq:opval_semicircular_Cauchy}
b G_s(b) = \1 + \eta(G_s(b)) G_s(b) \qquad \text{for $b\in\H^+(\B)$}.
\end{equation}
From \cite{HRFS2007}, we further learn that this equation uniquely determines $G_s$ among all functions defined on $\H^+(\B)$ and taking values in $\H^-(\B)$.

Even tough we are mostly interested in computing the analytic distribution $\mu_s$ of $s$, the equation \eqref{eq:opval_semicircular} indicates that it is beneficial to approach this scalar-valued problem through operator-valued free probability theory.

\section{Approximations of Cauchy transforms for operator-valued semicircular elements}\label{sect:approximation-of-Cauchy-transforms}

Let $s$ be a (not necessarily centered) $\B$-valued semicircular element in some operator-valued $C^\ast$-probability space $(\A,\E,\B)$ with covariance $\eta: \B \to \B$ and mean $\E[s]$.

Note that for the purpose of this paper, it would be sufficient to treat the particular case of operator-valued $W^\ast$-probability spaces of the form $(M_N(\C) \otimes \M, \mathbb{E}, M_N(\C))$ with $\mathbb{E}:=\id_{M_M(\C)} \otimes \tau$ for a tracial $W^\ast$-probability space $(\M,\tau)$ and semicircular elements $S=a_1 \otimes s_1 + \dots + a_n \otimes s_n$ like in \Cref{thm:rank-atom}. However, the much more general case of operator-valued $C^\ast$-probability spaces can be treated without any additional effort and we believe that these results are of interest beyond the concrete application in the context of this paper.

We aim at finding a way to numerically approximate the $\B$-valued Cauchy transform $G_s$ of $s$ with good control on the approximation error.

To this end, we build upon the iteration scheme presented in \cite{HRFS2007}, the details of which we shall recall in \Cref{subsec:fixed_point_iteration_Gs}. In \Cref{subsec:number_of_iterations}, we extract from \cite{HRFS2007} and the proof of the Earle-Hamilton Theorem \cite{EH1970} (see also \cite{Harris2003}) on which their approach relies an \emph{a priori bound} allowing us to estimate the number of iteration steps needed to reach the desired accuracy. In \Cref{subsec:termination_condition}, we prove an \emph{a posteriori bound} for the approximation error providing a termination condition which turns out to be much more appropriate for practical purposes; to this end, we study how the defining equation \eqref{eq:opval_semicircular_Cauchy} behaves under ``sufficiently small'' perturbations.

Note that it suffices to consider the case of centered $\B$-valued semicircular elements; indeed, the given $s$ yields a centered $\B$-valued semicircular element $s^\circ := s - \E[s]$ with the same covariance $\eta$ whose $\B$-valued Cauchy transforms is related to that of $s$ by $G_s(b) = G_{s^\circ}(b-\E[s])$ for all $b\in\H^+(\B)$.
Throughout the following subsections, we thus suppose that $\E[s]=0$; only in the very last \Cref{subsec:application_theta}, where the results derived in the the preceding subsections are getting combined to estimate $\mu_s(\{0\})$, we return to the general case.

\subsection{A fixed point iteration for \texorpdfstring{$G_s$}{Gs}}\label{subsec:fixed_point_iteration_Gs}

In \cite{HRFS2007} it was shown (actually under some weaker hypothesis regarding $\eta$) that approximations of $G_s$ can be obtained via a certain fixed point iteration (for a slightly modified function in place of $G_s$) which is built upon the characterizing equation \eqref{eq:opval_semicircular_Cauchy}. In this way, the function $G_s$ becomes easily accessible to numerical computations.

We recall the result which was achieved in \cite{HRFS2007}.

\begin{theorem}[Proposition 3.2, \cite{HRFS2007}]\label{thm:iteration}
Let $\B$ be a unital $C^\ast$-algebra and let $\eta: \B \to \B$ be a positive linear map (not necessarily completely positive). Fix $b\in \B$. We define a holomorphic function $h_b: \H^-(\B) \to \H^-(\B)$ by $h_b(w) := (b - \eta(w))^{-1}$ for every $w\in \H^-(\B)$. Then, $h_b$ has a unique fixed point $w_\ast$ in $\H^-(\B)$ and, for every $w_0\in\H^-(\B)$, the sequence $(h_b^n(w_0))_{n=0}^\infty$ of iterates converges to $w_\ast$.
\end{theorem}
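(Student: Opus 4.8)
The plan is to deduce all three assertions from the Earle--Hamilton fixed point theorem \cite{EH1970}: a holomorphic self-map of a bounded domain in a complex Banach space whose image lies \emph{strictly inside} the domain (i.e.\ has positive distance to the boundary) has a unique fixed point, and every orbit converges to it. So the whole game is to exhibit a bounded domain $D$ on which $h_b$ restricts to such a ``strictly-inside'' map, and then to transfer the conclusions from $D$ to all of $\H^-(\B)$.

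I would begin with two preliminary estimates. Since $\eta$ is positive it is $\ast$-preserving, so $\Im(\eta(w)) = \eta(\Im(w))$; and for $w\in\H^-(\B)$ we have $\Im(w)\leq 0$, hence $\eta(\Im(w))\leq 0$ and therefore $\Im(b-\eta(w)) = \Im(b) - \eta(\Im(w)) \geq \Im(b)$. In particular $b-\eta(w)\in\H^+(\B)$ is invertible, so $h_b$ is well defined (and holomorphic, as the composite of the affine map $w\mapsto b-\eta(w)$ with the holomorphic inversion on the open set of invertibles); note this also forces $\Im(b)$ to be an invertible positive element, with least spectral value $\epsilon := \|\Im(b)^{-1}\|^{-1}>0$. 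Moreover, applying \eqref{eq:resolvent_bound} with $b-\eta(w)$ in place of $b$ and $x=0$, together with operator antimonotonicity of inversion on positive invertibles, gives $\|h_b(w)\| = \|(b-\eta(w))^{-1}\| \leq \|\Im(b-\eta(w))^{-1}\| \leq \|\Im(b)^{-1}\| =: R$ for \emph{every} $w\in\H^-(\B)$; thus $h_b$ maps $\H^-(\B)$ into the norm-ball of radius $R$.

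Now set $D := \{w\in\H^-(\B) : \|w\|<R+1\}$. It is open, bounded, and convex (both $\H^-(\B)$ and the open ball are convex), hence a bounded domain; and by the above $h_b(\H^-(\B))$, in particular $h_b(D)$, is contained in $\{w\in\H^-(\B):\|w\|\leq R\}\subseteq D$. The key step --- and the one I expect to be the main obstacle --- is to show that $h_b(D)$ lies \emph{strictly} inside $D$. The distance from $h_b(D)$ to the part of $\B\setminus D$ outside the open ball is at least $1$, by $\|h_b(w)\|\leq R$. For the distance to $\B\setminus\H^-(\B)$ I would use the identity $\Im(c^{-1}) = -(c^\ast)^{-1}\Im(c)\,c^{-1}$ for invertible $c$; with $c=b-\eta(w)$ and $\Im(c)\geq\Im(b)\geq\epsilon\1$ this yields $\Im(h_b(w)) \leq -\epsilon\,(cc^\ast)^{-1} \leq -\epsilon\|c\|^{-2}\1$, and for $w\in D$ we have $\|c\|\leq\|b\|+\|\eta\|(R+1)$, so $\Im(h_b(w))\leq -\delta\1$ with $\delta:=\epsilon(\|b\|+\|\eta\|(R+1))^{-2}>0$ \emph{uniformly} over $D$. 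Since $\|\Im(v)\|\leq\|v\|$, any element within distance $\delta/2$ of $h_b(D)$ still has strictly negative imaginary part, so $\dist(h_b(D),\B\setminus\H^-(\B))\geq\delta/2$. Hence $\dist(h_b(D),\B\setminus D)\geq\min(1,\delta/2)>0$, and Earle--Hamilton applies to $h_b|_D\colon D\to D$: it has a unique fixed point $w_\ast\in D$ and $h_b^n(w_0)\to w_\ast$ for all $w_0\in D$.

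It remains to promote these statements to all of $\H^-(\B)$, which is now routine. Any fixed point $w'$ of $h_b$ in $\H^-(\B)$ satisfies $\|w'\|=\|h_b(w')\|\leq R<R+1$, so $w'\in D$ and hence $w'=w_\ast$ by uniqueness on $D$. And for an arbitrary starting point $w_0\in\H^-(\B)$ we have $h_b(w_0)\in\H^-(\B)$ with $\|h_b(w_0)\|\leq R$, i.e.\ $h_b(w_0)\in D$, so applying the convergence statement on $D$ to the orbit begun at $h_b(w_0)$ gives $h_b^n(w_0) = h_b^{\,n-1}(h_b(w_0))\to w_\ast$. The only genuinely delicate point in all of this is the uniform lower bound $\delta$ on $-\Im(h_b(w))$ over $D$: that is precisely what upgrades the soft self-map property into the ``strictly inside'' hypothesis required by Earle--Hamilton; everything else is bookkeeping with the resolvent identity and operator monotonicity.
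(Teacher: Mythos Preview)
Your proof is correct and follows essentially the same route as the paper (which in turn recapitulates \cite{HRFS2007}): restrict $h_b$ to the bounded convex domain $\D_r=\{w\in\H^-(\B):\|w\|<r\}$ for some $r>\|\Im(b)^{-1}\|$, use the resolvent bound to get $\|h_b(w)\|\leq\|\Im(b)^{-1}\|$, establish the uniform estimate $\Im(h_b(w))\leq -m_r^{-2}\|\Im(b)^{-1}\|^{-1}\1$ on $\D_r$ (your $\delta$, with $m_r=\|b\|+r\|\eta\|$), and invoke Earle--Hamilton. Your specific choice $r=\|\Im(b)^{-1}\|+1$ and your slightly conservative distance estimate $\delta/2$ (the paper records $\delta$ in \eqref{eq:strict_inclusion}) are immaterial, and your final paragraph promoting uniqueness and convergence from $\D_r$ to all of $\H^-(\B)$ just makes explicit what the paper leaves tacit.
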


Note that the formulation above differs from that in \cite{HRFS2007} as we prefer to perform the iteration in $\H^-(\B)$ and not in the right half-plane of $\B$, i.e., $\{b\in\B \mid \exists \epsilon>0: \Re(b) \geq \epsilon\1\}$, where $\Re(b) := \frac{1}{2}(b+b^\ast)$.

Clearly, $w\in \H^-(\B)$ is a fixed point of $h_b$ precisely when it solves
\begin{equation}\label{eq:opval_semicircular}
b w = \1 + \eta(w) w.
\end{equation}
Thus, for every fixed $b\in\H^+(\B)$, we obtain from \Cref{thm:iteration} that the Cauchy transform $G_s$ yields by $w=G_s(b)$ the unique solution of \eqref{eq:opval_semicircular} and that $G_s(b) = \lim_{n\to \infty} h_b^n(w_0)$ for any $w_0 \in \H^-(\B)$. In particular, as asserted above, the $\B$-valued Cauchy transform $G_s: \H^+(\B) \to \H^-(\B)$ is uniquely determined by \eqref{eq:opval_semicircular_Cauchy} among all functions on $\H^+(\B)$ with values in $\H^-(\B)$.

Since our goal is to quantitatively control the approximation error for the iteration scheme presented in \Cref{thm:iteration}, we must take a closer look at its proof as given in \cite{HRFS2007}. The key ingredient is an important result about fixed points of holomorphic functions between subsets of complex Banach spaces. Before giving the precise statement, let us introduce some terminology. Let $(E,\|\cdot\|)$ be a (complex) Banach space. A non-empty subset $S$ of $E$ is said to be \emph{bounded} if there exists $r>0$ such that $\|x\| \leq r$ for all $x\in S$. A subset $\D$ of $E$ is called \emph{domain} if it is open and connected. Further, if $\D$ is an open subset of $E$ and $S$ any non-empty subset of $\D$, we say that \emph{$S$ lies strictly inside $\D$} if $\dist(S,E \setminus \D) > 0$.

\begin{theorem}[Earle-Hamilton Theorem, \cite{EH1970}]\label{thm:Earle-Hamilton}
Let $\D$ be a non-empty domain in some complex Banach space $(E,\|\cdot\|)$. Suppose that $h: \D \to \D$ is a holomorphic function for which $h(\D)$ is bounded and lies strictly inside $\D$. Then $h$ has in $\D$ a unique fixed point $w_\ast \in \D$ and, for every initial point $w_0\in\D$, the sequence $(h^n(w_0))_{n=1}^\infty$ of iterates converges to $w_\ast$.
\end{theorem}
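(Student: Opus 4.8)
The plan is to equip $\D$ with a biholomorphically invariant (``hyperbolic'') distance for which holomorphic self-maps are automatically non-expansive, to upgrade the two hypotheses on $h(\D)$ into a genuine \emph{strict} contraction of that distance on the norm-closure of $h(\D)$ --- which will turn out to be a complete metric space preserved by $h$ --- and then to conclude by Banach's fixed point theorem; a harmless preliminary reduction makes everything happen inside a bounded domain.

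First I would pass to a bounded domain. Set $\D_0:=h(\D)$ and $t:=\dist(\D_0,E\setminus\D)>0$. The set $\D_1:=\{\,w\in E:\dist(w,\D_0)<t/3\,\}$ is a bounded domain (an open neighbourhood of the connected set $\overline{\D_0}$ is again connected) with $\D_0\subseteq\D_1\subseteq\D$, so $h(\D_1)\subseteq h(\D)=\D_0\subseteq\D_1$, and the image of $h|_{\D_1}$ is still bounded and still lies strictly inside $\D_1$. Since every fixed point of $h$ in $\D$ lies in $h(\D)=\D_0\subseteq\D_1$, and since $h(w_0)\in\D_1$ for every $w_0\in\D$, it suffices to prove the theorem with $\D_1$ in place of $\D$; so from now on I assume $\D\subseteq B(0,R)$ for some $R>0$. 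On such a $\D$ I would work with the Carath\'eodory--Finsler pseudometric $\alpha_\D(w;v):=\sup\{\,|Df(w)v|:f\in\operatorname{Hol}(\D,\bD)\,\}$, where $\bD\subseteq\C$ is the open unit disc, and with its length distance $C_\D(x,y):=\inf_\gamma\int_0^1\alpha_\D(\gamma(\tau);\gamma'(\tau))\,d\tau$ over paths $\gamma$ in $\D$ from $x$ to $y$. The properties I would invoke are standard and elementary: composing with functions in $\operatorname{Hol}(\D,\bD)$ shows that every holomorphic $g:\D\to\D$ is non-expansive for $\alpha_\D$ and hence for $C_\D$; a Cauchy estimate gives $\alpha_\D(w;v)\le\|v\|/\dist(w,E\setminus\D)$ while rescaled continuous linear functionals give $\alpha_\D(w;v)\ge\|v\|/(2R)$, so $C_\D$ is a genuine, norm-continuous metric bounded below by a multiple of the norm; consequently $\overline{\D_0}$ --- which lies in $\D$ because $\dist(\overline{\D_0},E\setminus\D)=t>0$ --- is a \emph{complete} metric space for $C_\D$, and $h$ maps it into itself.

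The crux is the strict contraction. Put $s:=\diam\D_0<\infty$; boundedness of $h(\D)$ is what makes $s$ finite, and ``lying strictly inside'' is what makes $t$ positive, and both are used. The key step is the pointwise inequality
\[
\alpha_\D(p;v)\ \le\ \frac{s}{s+t}\,\alpha_{\D_0}(p;v)\qquad(p\in\D_0,\ v\in E),
\]
where one first replaces $\D_0$ by a small open neighbourhood of $\overline{\D_0}$ still inside $\D$ so that $\alpha_{\D_0}$ is defined, at the price of perturbing $s,t$ by an arbitrarily small amount. I would prove it by \emph{dilating the test function}: for $f\in\operatorname{Hol}(\D,\bD)$ and $p\in\D_0$, the map $\hat f(w):=f\!\bigl(p+\tfrac{s+t}{s}(w-p)\bigr)$ lies in $\operatorname{Hol}(\D_0,\bD)$, because for $w\in\D_0$ one has $\bigl\|\bigl(p+\tfrac{s+t}{s}(w-p)\bigr)-w\bigr\|=\tfrac{t}{s}\|w-p\|\le t$, so the argument of $f$ stays within distance $t$ of $p\in\D_0$ and hence in $\D$; since moreover $D\hat f(p)v=\tfrac{s+t}{s}Df(p)v$, taking the supremum over $f$ yields the inequality. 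Feeding this into the infinitesimal Schwarz--Pick inequality $\alpha_{\D_0}(h(x);Dh(x)v)\le\alpha_\D(x;v)$ (again just composition) and integrating along the curves $h\circ\gamma$, which stay inside $\D_0$, I would obtain
\[
C_\D(h(x),h(y))\ \le\ \kappa\,C_\D(x,y),\qquad \kappa:=\frac{s}{s+t}\in(0,1),
\]
for all $x,y\in\D$. Banach's fixed point theorem, applied to the $\kappa$-contraction $h|_{\overline{\D_0}}$ of the complete space $(\overline{\D_0},C_\D)$ into itself, then gives a unique fixed point $w_\ast\in\overline{\D_0}$ with $C_\D(h^n(w),w_\ast)\to0$ for every $w\in\overline{\D_0}$, hence with $\|h^n(w)-w_\ast\|\to0$. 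Finally a fixed point of $h$ anywhere in $\D$ lies in $\D_0\subseteq\overline{\D_0}$ and so coincides with $w_\ast$, and for arbitrary $w_0\in\D$ one has $h(w_0)\in\overline{\D_0}$ and therefore $h^n(w_0)=h^{n-1}(h(w_0))\to w_\ast$; undoing the initial reduction yields the statement for the original $\D$.

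I expect the strict contraction estimate to be the main obstacle: the plain Schwarz--Pick inequality only gives non-expansiveness ($\kappa=1$), and squeezing out a factor $<1$ forces one to play the finite diameter of $h(\D)$ off against its positive distance to the boundary, so that ``bounded'' and ``strictly inside'' are both used essentially; moreover the dilation trick that achieves this is clean only at the infinitesimal level, and in a careful write-up one must track the constants (so the honest contraction constant is a little larger than $\tfrac{s}{s+t}$, but still below $1$) and replace $h(\D)$ by an open neighbourhood so that the length-metric estimates are legitimate.
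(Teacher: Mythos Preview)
Your proposal is correct and follows essentially the same route as the paper's discussion of the Earle--Hamilton proof (which the paper does not prove from scratch but sketches following \cite{Harris2003} in \Cref{subsec:number_of_iterations}): reduce to a bounded domain containing $h(\D)$, equip it with the Carath\'eodory--Riffen--Finsler length metric, establish that $h$ is a strict contraction with constant $\frac{s}{s+t}=(1+\epsilon/\diam(h(\D)))^{-1}$, and conclude via Banach's fixed point theorem together with the comparison $C_\D \gtrsim \|\cdot\|$. Your explicit ``dilation of the test function'' argument is precisely the mechanism behind the contraction inequality that the paper merely states; your acknowledged need to fatten $\D_0$ slightly and accept a marginally worse constant is the correct way to make that step rigorous.
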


In order to apply \Cref{thm:Earle-Hamilton}, the authors of \cite{HRFS2007} established, for any fixed $b\in\B$ and for each $r > \|\Im(b)^{-1}\|$, that the holomorphic mapping $h_b: \H^-(\B) \to \H^-(\B)$ restricts to a mapping $h_b: \D_r \to \D_r$ of the bounded domain $\D_r := \{ w \in \H^-(\B) \mid \|w\| < r\}$ and that $h_b(\D_r)$ lies strictly inside $\D_r$. More precisely, they have shown that
\begin{equation}\label{eq:strict_inclusion_imaginary_part}
\Im(h_b(w)) \leq - m_r^{-2}\|\Im(b)^{-1}\|^{-1} \1 \quad\text{for every $w\in\D_r$},
\end{equation}
where $m_r := \|b\| + r \|\eta\|$; together with \eqref{eq:Cauchy_bound-1}, this implies that
\begin{equation}\label{eq:strict_inclusion}
\dist(h_b(\D_r), \B \setminus \D_r) \geq \min\big\{r-\|\Im(b)^{-1}\|, m_r^{-2}\|\Im(b)^{-1}\|^{-1}\big\} > 0.
\end{equation}
We will come back to this fact later.

\subsection{Controlling the number of iterations}\label{subsec:number_of_iterations}

When performing the fixed point iteration in \Cref{thm:iteration} as proposed in \cite{HRFS2007}, it clearly is desirable to know \textit{a priori} how many iteration steps are needed in order to reach an approximation with an error lying below some prescribed threshold. To settle this question, we have to delve into the details of the proof of \Cref{thm:Earle-Hamilton} on which \Cref{thm:iteration} relies. In doing so, we follow the excellent exposition given in \cite{Harris2003}.

Let $\D$ be a non-empty domain in a complex Banach space $(E,\|\cdot\|)$ and let $h: \D \to \D$ be a bounded holomorphic map for which $h(\D)$ lies strictly inside $\D$, say $\dist(h(\D), E \setminus \D) \geq \epsilon$ for some $\epsilon > 0$. The core idea in the proof of \Cref{thm:Earle-Hamilton} is to show that $h$ forms a strict contraction with respect to the \emph{Carath\'eodory-Riffen-Finsler pseudometric} (for short, \emph{CRF-pseudometric}) $\rho$ on $\D$, provided that $\D$ is bounded. The latter restriction, however, is not problematic since one can always replace $\D$ by the domain $\widetilde{\D} := \bigcup_{x\in\D} B_\epsilon(h(x))$, which is bounded due to the boundedness of $h$. (For bounded $\D$, the CRF-pseudometric $\rho$ is even a metric; see \eqref{eq:CRF_dominates_norm} below.) Thus, we will assume from now on that $\D$ is bounded.
With no loss of generality, we may also suppose that the initial point $w_0$ for the iteration lies in $\widetilde{\D}$; otherwise, we just replace $w_0$ by $h(w_0)$ and consider the truncated sequence of iterates.

We recall that the CRF-pseudometric $\rho$ is defined as
$$\rho(x,y) := \inf\{L(\gamma) \mid \gamma \in \Gamma(x,y)\},$$
where $\Gamma(x,y)$ stands for the set of all piecewise continuously differentiable curves $\gamma: [0,1] \to \D$ with $\gamma(0)=x$ and $\gamma(1)=y$ and where $L(\gamma)$ denotes the length of $\gamma \in \Gamma(x,y)$; the latter is defined as
$$L(\gamma) := \int^1_0 \alpha(\gamma(t),\gamma'(t))\, dt,$$
where $\alpha: \D \times E \to [0,\infty)$ is defined by
$$\alpha(x,v) := \sup\{|(Dg)(x)v| \mid \text{$g: \D \to \bD$ holomorphic}\}$$
with $\bD := \{z\in \C \mid |z| < 1\}$.

Since we assumed that $\D$ is bounded, both $\D$ and $h(\D)$ have finite diameter and one finds that $h$ satisfies, for any fixed $0 < t \leq \diam(h(\D))^{-1} \epsilon$,
$$\rho(h(x),h(y)) \leq \frac{1}{1+t} \rho(x,y) \qquad\text{for all $x,y\in\D$}.$$
(The strongest bound among these is obtained, of course, for the particular choice $t=\diam(h(\D))^{-1} \epsilon$, but for later use, we prefer to keep this flexibility.) With $q := \frac{1}{1+t}$, we thus get by Banach's contraction mapping theorem the \emph{a priori bound}
\begin{equation}\label{eq:CRF_a-priori}
\rho(h^n(w_0),w_\ast) \leq \frac{q^n}{1-q}\rho(h(w_0),w_0) \qquad\text{for all $n\geq 0$}
\end{equation}
as well as the \emph{a posteriori bound}
\begin{equation}\label{eq:CRF_a-posteriori}
\rho(h^{n+1}(w_0),w_\ast) \leq \frac{q}{1-q}\rho(h^{n+1}(w_0),h^n(w_0)) \qquad\text{for all $n\geq 0$}.
\end{equation}

The final step in the proof of \Cref{thm:Earle-Hamilton} consists in the observation that $\rho$ compares to the metric induced by the norm $\|\cdot\|$ like
\begin{equation}\label{eq:CRF_dominates_norm}
\rho(x,y) \geq \diam(\D)^{-1} \|x-y\| \qquad\text{for all $x,y\in\D$}.
\end{equation}
Indeed, combining \eqref{eq:CRF_dominates_norm} with \eqref{eq:CRF_a-priori}, one concludes that the sequence of iterates $(h^n(w_0))_{n=0}^\infty$ must converge to $w_\ast$ with respect to $\|\cdot\|$.

In the same way as \eqref{eq:CRF_a-priori} yields in combination with \eqref{eq:CRF_dominates_norm} a bound on $\|h^n(w_0)-w_\ast\|$ in terms of $\rho(h(w_0),w_0)$, we may derive from \eqref{eq:CRF_a-posteriori} with the help of \eqref{eq:CRF_dominates_norm} a bound on $\|h^{n+1}(w_0)-w_\ast\|$ in terms of $\rho(h^{n+1}(w_0),h^n(w_0))$. From our practical point of view, the involvement of $\rho$ is somewhat unfavorable; we aim at making these bounds explicit in the sense that they only depend on controllable quantities. This is achieved by the following lemma, which takes its simplest form in the particular case of convex domains $\D$.

\begin{lemma}\label{lem:CRF_bounded_by_norm}
Let $\D$ be a non-empty domain in some complex Banach space $(E,\|\cdot\|)$ and let $h: \D \to \D$ be a holomorphic map for which $h(\D)$ lies strictly inside $\D$, say $\dist(h(\D), E \setminus \D) \geq \epsilon > 0$, and which has the property that $\sup_{w\in\D} \|(Dh)(w)\| \leq M < \infty$. Then, for all $x,y\in \D$, we have that
$$\rho(h(x),h(y)) \leq \frac{M}{\epsilon} \inf_{\gamma\in\Gamma(x,y)} \int^1_0 \|\gamma'(t)\|\, dt.$$
In particular, if $\D$ is convex, then it holds true that
$$\rho(h(x),h(y)) \leq \frac{M}{\epsilon} \|x - y\|.$$
\end{lemma}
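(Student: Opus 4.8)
The plan is to unravel the definition of the CRF-pseudometric $\rho$ and bound the Finsler pseudo-norm $\alpha$ at the relevant points. Recall that for $x\in\D$ and $v\in E$ one has $\alpha(x,v) = \sup\{|(Dg)(x)v| \mid g\colon \D\to\bD \text{ holomorphic}\}$. The key observation is that if $g\colon\D\to\bD$ is holomorphic and $h\colon\D\to\D$ is holomorphic with $h(\D)$ lying strictly inside $\D$ at distance at least $\epsilon$, then the composition behaves well: for any $w\in\D$, the ball $B_\epsilon(h(w))$ is contained in $\D$, so $g\circ h$ maps that ball into $\bD$. Applying the Cauchy estimate (Schwarz–Pick-type bound) to the holomorphic map $g\circ h$ restricted to $B_\epsilon(h(w))\to\bD$, we get $\|(D(g\circ h))(w)\| \leq \frac{1}{\epsilon}$. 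Hence for any tangent vector $u\in E$,
$$|(Dg)(h(w))\,(Dh)(w)u| = |(D(g\circ h))(w)u| \leq \frac{1}{\epsilon}\|u\| \leq \frac{M}{\epsilon}\,\|u\|$$
whenever $\|u\|\le M$... more carefully: taking the supremum over all such $g$ gives $\alpha(h(w),(Dh)(w)u) \leq \frac{1}{\epsilon}\|u\|$.

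With this pointwise estimate in hand, the proof proceeds by a change-of-variables/chain-rule argument along curves. Given $x,y\in\D$ and a curve $\gamma\in\Gamma(x,y)$, the composite $h\circ\gamma$ is a piecewise $C^1$ curve in $\D$ joining $h(x)$ to $h(y)$, i.e.\ $h\circ\gamma\in\Gamma(h(x),h(y))$, and its CRF-length is
$$L(h\circ\gamma) = \int_0^1 \alpha\big((h\circ\gamma)(t),\,(h\circ\gamma)'(t)\big)\,dt = \int_0^1 \alpha\big(h(\gamma(t)),\,(Dh)(\gamma(t))\gamma'(t)\big)\,dt.$$
Applying the pointwise bound from the first paragraph with $w=\gamma(t)$ and $u = \gamma'(t)$, together with $\|(Dh)(\gamma(t))\|\le M$, we obtain $\alpha\big(h(\gamma(t)),(Dh)(\gamma(t))\gamma'(t)\big) \leq \frac{1}{\epsilon}\|(Dh)(\gamma(t))\gamma'(t)\| \leq \frac{M}{\epsilon}\|\gamma'(t)\|$, so $L(h\circ\gamma) \leq \frac{M}{\epsilon}\int_0^1\|\gamma'(t)\|\,dt$. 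Since $\rho(h(x),h(y)) = \inf_{\delta\in\Gamma(h(x),h(y))} L(\delta) \leq L(h\circ\gamma)$ for this particular $\delta = h\circ\gamma$, taking the infimum over all $\gamma\in\Gamma(x,y)$ yields
$$\rho(h(x),h(y)) \leq \frac{M}{\epsilon}\inf_{\gamma\in\Gamma(x,y)}\int_0^1\|\gamma'(t)\|\,dt,$$
which is the first assertion. For the second assertion, if $\D$ is convex then the straight-line segment $\gamma(t) = (1-t)x + ty$ lies in $\D$ and satisfies $\int_0^1\|\gamma'(t)\|\,dt = \|x-y\|$, so the infimum is at most $\|x-y\|$, giving $\rho(h(x),h(y)) \leq \frac{M}{\epsilon}\|x-y\|$.

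The main obstacle is making precise the Cauchy/Schwarz–Pick estimate $\|(D(g\circ h))(w)\| \leq 1/\epsilon$ in the Banach-space setting: one needs that a holomorphic function from an $\epsilon$-ball in $E$ into the unit disc $\bD$ has derivative bounded by $1/\epsilon$ at the center. This is standard — it follows by restricting to complex lines through $w$, reducing to the classical one-variable Schwarz lemma (or Cauchy's inequality on $\{|\zeta|<\epsilon\}\subseteq\C$), and taking the supremum over unit directions — but it should be invoked carefully, perhaps citing \cite{Harris2003}. A secondary technical point is the verification that $h\circ\gamma$ is again piecewise continuously differentiable and thus admissible in $\Gamma(h(x),h(y))$, which is immediate from the chain rule since $h$ is holomorphic hence $C^1$. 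Everything else is routine bookkeeping with the definitions of $\alpha$, $L$, and $\rho$.
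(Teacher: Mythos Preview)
Your approach is correct and is essentially the same as the paper's: establish the pointwise bound $\alpha(p,v)\le \epsilon^{-1}\|v\|$ for $p\in h(\D)$ via the one-variable Cauchy estimate on complex lines, then push a curve $\gamma\in\Gamma(x,y)$ through $h$, use the chain rule, and bound $\|(Dh)(\gamma(t))\gamma'(t)\|\le M\|\gamma'(t)\|$ before taking the infimum; the convex case follows by inserting the straight segment. One wrinkle to clean up in your first paragraph: the Cauchy estimate should be applied to $g$ (not $g\circ h$) on the ball $B_\epsilon(h(w))\subset\D$, yielding $\|(Dg)(h(w))\|\le 1/\epsilon$ and hence $\alpha(h(w),v)\le \|v\|/\epsilon$; the displayed inequality $|(D(g\circ h))(w)u|\le \|u\|/\epsilon$ is not valid without the factor $M$, and your ``more carefully'' line should read $\alpha(h(w),(Dh)(w)u)\le \frac{1}{\epsilon}\|(Dh)(w)u\|$---which is exactly what you then use (correctly) in the second paragraph.
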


\begin{proof}
\begin{enumerate}[label={(\arabic*)}, wide]
    \item\label{lem:CRF_bounded_by_norm-1}
    We claim that $\alpha(x,v) \leq \dist(x,E \setminus \D)^{-1} \|v\|$ for  each $(x,v) \in \D \times E$. Obviously, it suffices to treat the case $v\neq 0$. For each holomorphic $g: \D \to \bD$, we define a holomorphic function $g_{x,v}: D(0,r) \to \bD$ on the open disc $D(0,r) = \{z\in \C \mid |z| < r\}$ by $g_{x,v}(z) := g(x+zv)$, where $r$ is chosen such that $\{x+zv \mid z\in D(0,r)\} \subseteq \D$. Note that we may take $r = \dist(x,E \setminus \D) \|v\|^{-1}$; by the Cauchy estimates, we thus find that
    $$|(Dg)(x)v| = |g_{x,v}'(0)| \leq \frac{1}{r} = \dist(x,E \setminus \D)^{-1} \|v\|.$$
    The asserted bound for $\alpha(x,v)$ follows from the latter by taking the supremum over all holomorphic functions $g: \D \to \bD$. (We point out that the proof actually shows that $\|(Dg)(x)\| \leq \dist(x,E \setminus \D)^{-1}$ for all $x\in \D$.)
    \item\label{lem:CRF_bounded_by_norm-2}
    Let $x,y\in \D$ be given. For every $\gamma \in \Gamma(x,y)$, we put $\gamma^\ast := \gamma([0,1])$ and infer from the bound derived in \ref{lem:CRF_bounded_by_norm-1} that
    $$L(\gamma) \leq \dist(\gamma^\ast,E \setminus \D)^{-1} \int^1_0 \|\gamma'(t)\|\, dt.$$
    \item\label{lem:CRF_bounded_by_norm-3}
    Again, let $x,y\in \D$ be given and take any $\gamma_0 \in \Gamma(x,y)$. Since $h$ is a smooth self-map of $\D$, the curve $\gamma := h \circ \gamma_0$ belongs to $\Gamma(h(x),h(y))$, and because we have $\gamma^\ast \subset h(\D)$, the assumption of strict inclusion of $h(\D)$ in $\D$ guarantees that $\dist(\gamma^\ast,E \setminus \D) \geq \epsilon$. We conclude with the help of \ref{lem:CRF_bounded_by_norm-2} that
    $$\rho(h(x),h(y)) \leq L(\gamma) \leq \frac{1}{\epsilon} \int^1_0 \|\gamma'(t)\|\, dt.$$
    By the chain rule, we have $\gamma'(t) = (D h)(\gamma_0(t)) \gamma_0'(t)$, and by using the assumption of boundedness of $Dh$, we infer that $\|\gamma'(t)\| \leq M \|\gamma_0'(t)\|$ for all $t\in [0,1]$. Therefore, we may deduce from the previous bound that
    $$\rho(h(x),h(y)) \leq \frac{M}{\epsilon} \int^1_0 \|\gamma_0'(t)\|\, dt.$$
    As $\gamma_0\in\Gamma(x,y)$ was arbitrary, taking the infimum over all $\gamma_0$ yields the first bound asserted in the lemma. 
    \item
    Suppose that $\D$ is convex. For $x,y\in\D$, the curve $\gamma_0: [0,1] \to E$ given by $\gamma_0(t) := ty + (1-t)x$ belongs then to $\Gamma(x,y)$. Since $\int^1_0 \|\gamma_0'(t)\|\, dt = \|x-y\|$, the additional assertion follows from the bound established in \ref{lem:CRF_bounded_by_norm-3}.\qedhere
\end{enumerate}
\end{proof}

By combining \Cref{lem:CRF_bounded_by_norm} with \eqref{eq:CRF_dominates_norm} and the bounds \eqref{eq:CRF_a-priori} and \eqref{eq:CRF_a-posteriori} (noting that $\diam(h(\D)) \leq \diam(\D)$), we immediately get the following result.

\begin{proposition}\label{prop:iteration_estimate_general}
Let $\D$ be a non-empty bounded convex domain in a complex Banach space $(E,\|\cdot\|)$ and let $h: \D \to \D$ be a holomorphic map for which $h(\D)$ lies strictly inside $\D$, say $\dist(h(\D), E \setminus \D) \geq \epsilon > 0$, and which has the property that $\sup_{w\in\D} \|(Dh)(w)\| \leq M < \infty$. Put $q := (1+\frac{\epsilon}{\diam(\D)})^{-1}$ and denote by $w_\ast$ the unique fixed point of $h$ on $\D$. Then, for each $w_0\in h(\D)$, we have that
$$\|h^n(w_0)-w_\ast\| \leq \frac{M}{\epsilon^2} \diam(\D)^2 \|h(w_0)-w_0\| q^{n-1} \qquad\text{for all $n\geq 1$}.$$
and furthermore
$$\|h^n(w_0)-w_\ast\| \leq \frac{M}{\epsilon^2} \diam(\D)^2 \|h^n(w_0)-h^{n-1}(w_0)\| \qquad\text{for all $n\geq 1$}.$$
\end{proposition}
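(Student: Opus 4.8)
The plan is to feed into one another the three ingredients already isolated in the preceding discussion: the comparison $\rho(x,y)\ge\diam(\D)^{-1}\|x-y\|$ from \eqref{eq:CRF_dominates_norm}, the a priori and a posteriori CRF estimates \eqref{eq:CRF_a-priori} and \eqref{eq:CRF_a-posteriori}, and the convex case of \Cref{lem:CRF_bounded_by_norm}. First I would pin down the contraction ratio: since $\diam(h(\D))\le\diam(\D)$, the value $t:=\diam(\D)^{-1}\epsilon$ is an admissible choice in the contraction estimate recalled just before \eqref{eq:CRF_a-priori}, so $h$ contracts $\rho$ by $q=(1+t)^{-1}=(1+\epsilon/\diam(\D))^{-1}$, exactly the $q$ in the statement; moreover $1-q=\epsilon/(\diam(\D)+\epsilon)$, hence $q/(1-q)=\diam(\D)/\epsilon$, which will furnish one of the two factors $\diam(\D)/\epsilon$. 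Second, I would record the elementary geometric fact that underlies the role of the hypothesis $w_0\in h(\D)$: since $\D$ is convex, the set $\{x\in E\mid \dist(x,E\setminus\D)\ge\epsilon\}$ (which equals $\{x\mid B_\epsilon(x)\subseteq\D\}$) is convex, and $h(\D)$ is contained in it; therefore, whenever two points lie in $h(\D)$, the straight segment joining them stays at distance $\ge\epsilon$ from $E\setminus\D$, and the bound $\alpha(x,v)\le\dist(x,E\setminus\D)^{-1}\|v\|$ from part \ref{lem:CRF_bounded_by_norm-1} of the proof of \Cref{lem:CRF_bounded_by_norm} yields $\rho(a,b)\le\epsilon^{-1}\|a-b\|$ for all $a,b\in h(\D)$.

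For the a priori bound I would start from $\|h^n(w_0)-w_\ast\|\le\diam(\D)\,\rho(h^n(w_0),w_\ast)$ by \eqref{eq:CRF_dominates_norm}, insert \eqref{eq:CRF_a-priori} to get $\le\diam(\D)\,\frac{q^n}{1-q}\,\rho(h(w_0),w_0)=\diam(\D)\cdot\frac{\diam(\D)}{\epsilon}\,q^{\,n-1}\,\rho(h(w_0),w_0)$, and then bound $\rho(h(w_0),w_0)$ using the geometric fact above: both $w_0$ (by hypothesis) and $h(w_0)$ lie in $h(\D)$, so $\rho(h(w_0),w_0)\le\epsilon^{-1}\|h(w_0)-w_0\|$. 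Multiplying out gives $\|h^n(w_0)-w_\ast\|\le\frac{\diam(\D)^2}{\epsilon^2}\,q^{\,n-1}\,\|h(w_0)-w_0\|$, which implies the stated first inequality (alternatively, estimating $\rho(h(w_0),w_0)$ through the convex case of \Cref{lem:CRF_bounded_by_norm} instead of the direct segment bound produces precisely the factor $M/\epsilon$, hence the $M$ appearing in the statement).

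For the a posteriori bound I would proceed identically, using \eqref{eq:CRF_a-posteriori} in the form $\rho(h^n(w_0),w_\ast)\le\frac{q}{1-q}\,\rho(h^n(w_0),h^{n-1}(w_0))$ for $n\ge1$, together with $\|h^n(w_0)-w_\ast\|\le\diam(\D)\,\rho(h^n(w_0),w_\ast)$ and $q/(1-q)=\diam(\D)/\epsilon$; the remaining CRF distance $\rho(h^n(w_0),h^{n-1}(w_0))$ is controlled by $\epsilon^{-1}\|h^n(w_0)-h^{n-1}(w_0)\|$ once one checks that for $n\ge1$ both $h^n(w_0)$ and $h^{n-1}(w_0)$ lie in $h(\D)$ — which is exactly where $w_0\in h(\D)$ is used, since it guarantees $h^{n-1}(w_0)\in h(\D)$ already for $n=1$. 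The main thing to be careful about is precisely this bookkeeping: \Cref{lem:CRF_bounded_by_norm} and the segment argument only control $\rho$ between points known to sit in $h(\D)$ (respectively known to be images under $h$), so one must track which iterates are in $h(\D)$ rather than merely in $\D$, and carry out the corresponding index shift; everything else is a routine substitution of the constants computed in the first paragraph.
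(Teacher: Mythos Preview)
Your approach is the paper's own: feed \eqref{eq:CRF_dominates_norm}, the contraction estimates \eqref{eq:CRF_a-priori}--\eqref{eq:CRF_a-posteriori}, and \Cref{lem:CRF_bounded_by_norm} into one another, using $\diam(h(\D))\le\diam(\D)$ to pin down $q$. Your direct segment argument---the $\epsilon$-interior of a convex $\D$ is convex, hence $\rho(a,b)\le\epsilon^{-1}\|a-b\|$ for all $a,b\in h(\D)$---is a clean refinement that actually yields the tighter constant $\diam(\D)^2/\epsilon^2$ without the factor $M$.

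One minor bookkeeping point deserves mention. The bound you derive does not literally imply the stated form when $M<1$, and your parenthetical alternative via \Cref{lem:CRF_bounded_by_norm} does not recover the factor $M/\epsilon$ as claimed: that lemma controls $\rho(h(x),h(y))$ in terms of $\|x-y\|$, i.e.\ the distance of the \emph{preimages}, so applying it to $\rho(h(w_0),w_0)=\rho(h(w_0),h(v_0))$ with $h(v_0)=w_0$ yields $\frac{M}{\epsilon}\|w_0-v_0\|$ rather than $\frac{M}{\epsilon}\|h(w_0)-w_0\|$. This is harmless for every downstream use in the paper, and your constant is in fact sharper whenever $M\ge 1$.
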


We apply this result in the particular setting of \Cref{thm:iteration}.

\begin{corollary}\label{cor:iteration_estimate_Cauchy}
Let $\B$ be a unital $C^\ast$-algebra and let $\eta: \B \to \B$ be a positive linear map (not necessarily completely positive). Fix $b\in \B$ and let $w_\ast$ be the unique fixed point of the holomorphic function $h_b: \H^-(\B) \to \H^-(\B)$ which is defined by $h_b(w) := (b - \eta(w))^{-1}$ for every $w\in \H^-(\B)$ (equivalently, $w_\ast$ is the unique solution of \eqref{eq:opval_semicircular} in $\H^-(\B)$, or in other words, $w_\ast=G_s(b)$ for any centered $\B$-valued semicircular element $s$ with covariance $\eta$). Finally, choose any $r > \|\Im(b)^{-1}\|$ and set
$$\epsilon := \min\big\{r-\|\Im(b)^{-1}\|, (\|b\| + r \|\eta\|)^{-2} \|\Im(b)^{-1}\|^{-1}\big\}$$
and $q := (1+\frac{\epsilon}{2r})^{-1}$. Then, for each initial point $w_0\in h_b(\D_r)$ and all $n\geq 1$, with $\D_r = \{w \in \H^-(\B) \mid \|w\| < r\}$ as introduced in \Cref{subsec:fixed_point_iteration_Gs}, we have
\begin{equation}\label{eq:iteration_estimate_Cauchy_a-priori}
\|h_b^n(w_0)-w_\ast\| \leq \Big(\|\Im(b)^{-1}\| \frac{2r}{\epsilon}\Big)^2 \|\eta\| \|h_b(w_0)-w_0\| q^{n-1}
\end{equation}
and furthermore
\begin{equation}\label{eq:iteration_estimate_Cauchy_a-posteriori}
\|h_b^n(w_0)-w_\ast\| \leq \Big(\|\Im(b)^{-1}\| \frac{2r}{\epsilon}\Big)^2 \|\eta\| \|h_b^n(w_0)-h_b^{n-1}(w_0)\|.
\end{equation}
\end{corollary}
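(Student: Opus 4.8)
The plan is to check that the holomorphic map $h_b$ and the domain $\D_r$ meet all hypotheses of \Cref{prop:iteration_estimate_general}, and then to read off the two claimed inequalities after two elementary estimates. Concretely, I need to establish three things: that $\D_r = \{w\in\H^-(\B) \mid \|w\| < r\}$ is a non-empty bounded convex domain; that $h_b$ restricts to a self-map of $\D_r$ whose image lies strictly inside $\D_r$ with $\dist(h_b(\D_r),\B\setminus\D_r) \geq \epsilon$ for exactly the $\epsilon$ displayed in the statement; and that $\sup_{w\in\D_r}\|(Dh_b)(w)\| \leq M$ for a suitable finite $M$.

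The first two points are essentially already on the table. For convexity, $\D_r$ is the intersection of the open ball of radius $r$ with $\H^-(\B)$, and both of these are open and convex, hence so is their intersection; it is bounded by $r$ and non-empty because $r > \|\Im(b)^{-1}\| > 0$ (for instance $-\tfrac{r}{2}i\1\in\D_r$, or note that $w_\ast = G_s(b)\in\D_r$ by \eqref{eq:Cauchy_bound-1}). That $h_b(\D_r)\subseteq\D_r$ and that $h_b(\D_r)$ lies strictly inside $\D_r$ with the stated lower bound on the distance is precisely what was recalled in \Cref{subsec:fixed_point_iteration_Gs} from \cite{HRFS2007}, namely \eqref{eq:strict_inclusion_imaginary_part}--\eqref{eq:strict_inclusion} with $m_r = \|b\| + r\|\eta\|$, which gives exactly the $\epsilon$ from the corollary; in particular $\epsilon>0$.

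The one genuine computation — and the step I expect to be the main obstacle, if only mildly — is the uniform bound on $Dh_b$. Writing $h_b = \iota\circ\phi_b$ with $\phi_b(w) := b-\eta(w)$ and $\iota(x) := x^{-1}$, the chain rule together with $(D\iota)(x)[u] = -x^{-1}ux^{-1}$ yields $(Dh_b)(w)[v] = (b-\eta(w))^{-1}\,\eta(v)\,(b-\eta(w))^{-1}$ for $w\in\D_r$, $v\in\B$. For $w\in\D_r\subseteq\H^-(\B)$ we have $\Im(w)\leq 0$, so by positivity of $\eta$ (positive linear maps are $\ast$-preserving and order-preserving on self-adjoint elements) $\eta(\Im(w))\leq 0$, whence $\Im(b-\eta(w)) = \Im(b) - \eta(\Im(w)) \geq \Im(b)$; in particular $b-\eta(w)\in\H^+(\B)$, and \eqref{eq:resolvent_bound} (which holds, more generally, for any element of $\H^+(\B)$) together with the order-reversing property of inversion on positive invertible elements gives $\|(b-\eta(w))^{-1}\| \leq \|\Im(b-\eta(w))^{-1}\| \leq \|\Im(b)^{-1}\|$. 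Hence $\|(Dh_b)(w)\| \leq \|\Im(b)^{-1}\|^2\|\eta\|$ for every $w\in\D_r$, so \Cref{prop:iteration_estimate_general} applies with $M := \|\Im(b)^{-1}\|^2\|\eta\|$.

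Finally I would feed these data into \Cref{prop:iteration_estimate_general}. Since $\D_r$ sits inside a ball of radius $r$ we have $\diam(\D_r)\leq 2r$, so the contraction factor $(1+\epsilon/\diam(\D_r))^{-1}$ produced by the proposition is at most $q = (1+\epsilon/(2r))^{-1} < 1$; replacing it by $q$ only enlarges the right-hand side, so the a priori bound still holds with $q$. Substituting $M\leq\|\Im(b)^{-1}\|^2\|\eta\|$ and $\diam(\D_r)^2\leq 4r^2$ into the two estimates of \Cref{prop:iteration_estimate_general} and regrouping $\frac{M}{\epsilon^2}\diam(\D_r)^2 \leq \big(\|\Im(b)^{-1}\|\tfrac{2r}{\epsilon}\big)^2\|\eta\|$ yields exactly \eqref{eq:iteration_estimate_Cauchy_a-priori} and \eqref{eq:iteration_estimate_Cauchy_a-posteriori}; everything beyond the derivative bound is bookkeeping.
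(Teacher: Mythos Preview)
Your proof is correct and follows essentially the same route as the paper: verify the hypotheses of \Cref{prop:iteration_estimate_general} for $h_b$ on $\D_r$, compute $(Dh_b)(w)v = (b-\eta(w))^{-1}\eta(v)(b-\eta(w))^{-1}$ and bound it by $M = \|\Im(b)^{-1}\|^2\|\eta\|$, invoke \eqref{eq:strict_inclusion} for the $\epsilon$, and plug in $\diam(\D_r)\leq 2r$. You are in fact more explicit than the paper on two points it leaves tacit --- the convexity of $\D_r$ and the reason why $\|(b-\eta(w))^{-1}\|\leq\|\Im(b)^{-1}\|$ --- and your care in replacing $\diam(\D_r)$ by the upper bound $2r$ in both the contraction factor and the prefactor is exactly right (the paper simply asserts $\diam(\D_r)=2r$).
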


\begin{proof}
We know that $h_b$ maps $\D_r$ strictly into itself; in fact, due to \eqref{eq:strict_inclusion}, we have that $\dist(h_b(\D_r), \B \setminus \D_r) \geq \epsilon$. Further, we see that, for $w\in\H^-(\B)$,
$$(D h_b)(w) v = (b-\eta(w))^{-1} \eta(v) (b-\eta(w))^{-1} \qquad\text{for all $v\in\B$},$$
and hence $\|(D h_b)(w)\| \leq \|\Im(b)^{-1}\|^2 \|\eta\|$; thus, with $M := \|\Im(b)^{-1}\|^2 \|\eta\|$, it holds true that $\sup_{w\in\D_r} \|(D h_b)(w)\| \leq M$. Finally, we note that obviously $\diam(\D_r) = 2r$. Therefore, by applying \Cref{prop:iteration_estimate_general} to $h_b: \D_r \to \D_r$, we arrive at the asserted bounds.
\end{proof}

\subsection{A termination condition for the fixed point iteration}\label{subsec:termination_condition}

The a priori bound \eqref{eq:iteration_estimate_Cauchy_a-priori} for the approximation error which was obtained in \Cref{cor:iteration_estimate_Cauchy} may not be useful for practical purposes as the required number of iterations is simply too high; see \Cref{subsubsec:iteration_estimate_Cauchy_apriori}.
Fortunately, the speed of convergence is typically much better than predicted by \eqref{eq:iteration_estimate_Cauchy_a-priori}. In order to control how much the found approximations deviate from $G_s$, it is thus more appropriate to use an a posteriori estimate instead. In contrast to the bound \eqref{eq:iteration_estimate_Cauchy_a-priori} which comes for free from Banach's contraction theorem, \Cref{prop:opval_semicircular_approximation} which is given below exploits the special structure of our situation and provides a significantly improved version of this termination condition; we shall substantiate this claim in \Cref{subsubsec:iteration_estimate_Cauchy_aposteriori}. We point out that the result of \Cref{prop:opval_semicircular_approximation} has been taken up and was generalized in \cite{Mai2022}.

Before giving the precise statement, we introduce some further notation. Consider an operator-valued $C^\ast$-probability space $(\A,\E,\B)$ and let $\phi$ be a state on $\A$. To every selfadjoint noncommutative random variable $x$ which lives in $\A$, we associate the function $\Theta_x: \H^+(\B) \to \R^+$ which is defined by $\Theta_x(b) := - \|\Im(b)^{-1}\|^{-1} \Im(\phi(G_x(b)))$ for every $b\in \H^+(\B)$. Further, if $\eta: \B \to \B$ is completely positive and $b\in\H^+(\B)$, we put $\Delta_b(w) := b - w^{-1} - \eta(w)$ for every $w\in \H^-(\B)$.

\begin{proposition}\label{prop:opval_semicircular_approximation}
Let $(\A,\E,\B)$ be an operator-valued $C^\ast$-probability space and let $s$ be a $\B$-valued semicircular element in $\A$ with covariance $\eta: \B \to \B$. Fix $b\in\H^+(\B)$ and let $w_\ast := G_s(b)\in\H^-(\B)$ be the unique solution of the equation \eqref{eq:opval_semicircular} on $\H^-(\B)$. Suppose that $\tilde{w}_\ast \in \H^-(\B)$ is an approximate solution of \eqref{eq:opval_semicircular} in the sense that
\begin{equation}\label{eq:approximate_solution}
\|\Delta_b(\tilde{w}_\ast)\| \leq \sigma \|\Im(b)^{-1}\|^{-1}
\end{equation}
holds for some $\sigma \in (0,1)$. Then
\begin{equation}\label{eq:opval_semicircular_approximation-1}
\|\tilde{w}_\ast - w_\ast\| \leq \frac{1}{1-\sigma} \|\Im(b)^{-1}\|^2 \|\Delta_b(\tilde{w}_\ast)\| \leq \frac{\sigma}{1-\sigma} \|\Im(b)^{-1}\|.
\end{equation}
Further, if $\phi$ is a state on $\B$, then
\begin{align}\label{eq:opval_semicircular_approximation-2}
|\phi(\tilde{w}_\ast)-\phi(w_\ast)| &\leq \frac{1}{1-\sigma} \sqrt{\Theta_s(b)} \|\Im(b)^{-1}\|^2 \|\Delta_b(\tilde{w}_\ast)\|\\
&\leq \frac{\sigma}{1-\sigma} \sqrt{\Theta_s(b)} \|\Im(b)^{-1}\|.\nonumber
\end{align}
\end{proposition}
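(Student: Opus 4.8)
The plan is to exploit the resolvent identity together with the defining equation \eqref{eq:opval_semicircular}. First I would rewrite the equation $bw = \1 + \eta(w)w$ in the equivalent form $w = (b - \eta(w))^{-1}$, which is legitimate because $w\in\H^-(\B)$ forces $b - \eta(w)\in\H^+(\B)$ and hence invertible, with $\|(b-\eta(w))^{-1}\| \le \|\Im(b)^{-1}\|$ by \eqref{eq:resolvent_bound}. In particular $w_\ast = (b - \eta(w_\ast))^{-1}$. The quantity $\Delta_b(\tilde w_\ast) = b - \tilde w_\ast^{-1} - \eta(\tilde w_\ast)$ measures exactly the failure of $\tilde w_\ast$ to satisfy this: multiplying by $\tilde w_\ast$ on the right gives $\Delta_b(\tilde w_\ast)\tilde w_\ast = b\tilde w_\ast - \1 - \eta(\tilde w_\ast)\tilde w_\ast$, so $\Delta_b$ is the natural residual for \eqref{eq:opval_semicircular}. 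The key algebraic step will be to write $\tilde w_\ast - w_\ast$ in terms of $\Delta_b(\tilde w_\ast)$ via a resolvent-type identity.

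For the first bound \eqref{eq:opval_semicircular_approximation-1}, I would start from $\tilde w_\ast^{-1} = b - \eta(\tilde w_\ast) - \Delta_b(\tilde w_\ast)$ and $w_\ast^{-1} = b - \eta(w_\ast)$, subtract to get $\tilde w_\ast^{-1} - w_\ast^{-1} = -\eta(\tilde w_\ast - w_\ast) - \Delta_b(\tilde w_\ast)$, and then use $\tilde w_\ast - w_\ast = -\tilde w_\ast(\tilde w_\ast^{-1} - w_\ast^{-1})w_\ast = \tilde w_\ast\big(\eta(\tilde w_\ast - w_\ast) + \Delta_b(\tilde w_\ast)\big)w_\ast$. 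Taking norms and using $\|\tilde w_\ast\|, \|w_\ast\| \le \|\Im(b)^{-1}\|$ (both lie in $\H^-(\B)$ and solve/approximately solve the fixed point equation, so the resolvent bound applies to $\tilde w_\ast = (b - \eta(\tilde w_\ast) - \Delta_b(\tilde w_\ast))^{-1}$ — here I need to check that the perturbed denominator still lies in $\H^+(\B)$, which follows from \eqref{eq:approximate_solution} with $\sigma<1$) gives $\|\tilde w_\ast - w_\ast\| \le \|\Im(b)^{-1}\|^2\big(\|\eta\|\,\|\tilde w_\ast - w_\ast\| + \|\Delta_b(\tilde w_\ast)\|\big)$. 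Rearranging yields the claim provided $\|\Im(b)^{-1}\|^2\|\eta\| < 1$ — but that may not hold, so instead I would absorb the $\eta$ term more carefully: since $\eta(\tilde w_\ast - w_\ast)$ is sandwiched and $w_\ast^{-1} = b - \eta(w_\ast)$, a cleaner route is to write $\tilde w_\ast - w_\ast = \tilde w_\ast \Delta_b(\tilde w_\ast) w_\ast + \tilde w_\ast\,\eta(\tilde w_\ast - w_\ast)\,w_\ast$ and observe that $\tilde w_\ast\,\eta(\cdot)\,w_\ast$ has operator norm bounded by $\sigma$ when acting appropriately — this is where \eqref{eq:approximate_solution} enters, giving $\|\1 - \tilde w_\ast^{-1} w_\ast\| = \|\tilde w_\ast^{-1}(\tilde w_\ast - w_\ast)\| \le \sigma$-type control. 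I expect the precise bookkeeping of which factor the $\sigma$ from \eqref{eq:approximate_solution} attaches to is the main subtlety; the cleanest formulation is probably $w_\ast = (\1 - \tilde w_\ast\,\eta(\cdot))^{-1}$-style Neumann series applied to $\tilde w_\ast\Delta_b(\tilde w_\ast)w_\ast$, where convergence is guaranteed by $\sigma<1$. The second inequality in \eqref{eq:opval_semicircular_approximation-1} is then immediate from \eqref{eq:approximate_solution}.

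For \eqref{eq:opval_semicircular_approximation-2}, the improvement over naively applying $\|\phi\|=1$ to \eqref{eq:opval_semicircular_approximation-1} comes from a Cauchy–Schwarz argument using the positivity built into the free probability setup. I would apply $\phi$ to the identity $\tilde w_\ast - w_\ast = \tilde w_\ast\big(\Delta_b(\tilde w_\ast) + \eta(\tilde w_\ast - w_\ast)\big)w_\ast$ and use that $\phi$ is a state, so $|\phi(a^\ast c b)| \le \|a^\ast a\|^{1/2}\cdots$; the point is that $\phi(\tilde w_\ast \,\cdot\, w_\ast)$ can be estimated by $\phi(\tilde w_\ast \tilde w_\ast^\ast)^{1/2}$ or $\phi(w_\ast^\ast w_\ast)^{1/2}$, and $\Im(\phi(w_\ast)) = \Im(\phi(G_s(b)))$ relates to $\|w_\ast^\ast w_\ast\|$-type quantities through the semicircular structure — precisely, $\Theta_s(b) = -\|\Im(b)^{-1}\|^{-1}\Im(\phi(G_s(b)))$ should emerge as the relevant bound on $\phi(\Im(-w_\ast))$ or on a quadratic form in $w_\ast$. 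The key identity I would look for is something like $\Im(w_\ast) = -w_\ast^\ast\big(\Im(b) - \Im(\eta(w_\ast))\big)w_\ast$ from the defining equation, whose trace/state against $\phi$ controls $\phi(w_\ast^\ast w_\ast)$ from above in terms of $\Theta_s(b)$ and $\|\Im(b)^{-1}\|$. The hard part will be identifying exactly which positive quadratic expression $\Theta_s(b)$ bounds and threading it through the Cauchy–Schwarz step so that the factor $\sqrt{\Theta_s(b)}$ (rather than $\|\Im(b)^{-1}\|$) appears, replacing one power of the resolvent bound; the second inequality in \eqref{eq:opval_semicircular_approximation-2} then again follows directly from \eqref{eq:approximate_solution}.
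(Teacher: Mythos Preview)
Your approach has a genuine gap in the core step for \eqref{eq:opval_semicircular_approximation-1}. Starting from the identity
\[
\tilde w_\ast - w_\ast = \tilde w_\ast\,\eta(\tilde w_\ast - w_\ast)\,w_\ast + \tilde w_\ast\,\Delta_b(\tilde w_\ast)\,w_\ast
\]
is fine, but to close it you need the linear map $L(x) = \tilde w_\ast\,\eta(x)\,w_\ast$ to be a strict contraction. The only control you have is $\|L\| \le \|\tilde w_\ast\|\,\|w_\ast\|\,\|\eta\|$, and nothing in the hypotheses bounds $\|\eta\|\,\|\Im(b)^{-1}\|^2$ below $1$. Your suggestion that \eqref{eq:approximate_solution} yields ``$\|L\|\le \sigma$-type control'' is not justified: the assumption bounds $\|\Delta_b(\tilde w_\ast)\|$, not $\|\eta\|$, and there is no mechanism here by which the smallness of the residual forces $L$ to contract. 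The Neumann series you propose therefore need not converge, and the argument does not go through as written. (A side remark: your bound $\|\tilde w_\ast\|\le\|\Im(b)^{-1}\|$ is also slightly off; the correct bound is $\frac{1}{1-\sigma}\|\Im(b)^{-1}\|$, but that is not the main problem.)

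The paper avoids this obstruction entirely by a change of viewpoint you are missing: set $\Lambda := b - \Delta_b(\tilde w_\ast)$. Then $\Lambda \tilde w_\ast = \1 + \eta(\tilde w_\ast)\tilde w_\ast$, so $\tilde w_\ast$ is the \emph{exact} solution of \eqref{eq:opval_semicircular} at the shifted point $\Lambda$, i.e.\ $\tilde w_\ast = G_s(\Lambda)$. One checks from \eqref{eq:approximate_solution} that $\Lambda\in\H^+(\B)$ with $\|\Im(\Lambda)^{-1}\|\le\frac{1}{1-\sigma}\|\Im(b)^{-1}\|$. Now the comparison $\tilde w_\ast - w_\ast = G_s(\Lambda) - G_s(b)$ is handled by the resolvent identity \emph{in $\A$}, namely $G_s(\Lambda) - G_s(b) = \E\big[(\Lambda-s)^{-1}(b-\Lambda)(b-s)^{-1}\big]$, where each resolvent factor is bounded via \eqref{eq:resolvent_bound}. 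The map $\eta$ never appears, so no contraction hypothesis is needed. For \eqref{eq:opval_semicircular_approximation-2} the same representation is combined with Cauchy--Schwarz for the state $\phi\circ\E$ on $\A$ (this is the content of the paper's Lemma~\ref{lem:Cauchy_Lipschitz}), which is where the factor $\sqrt{\Theta_s(b)}$ enters. Your intuition that Cauchy--Schwarz and a quadratic identity for $\Im(w_\ast)$ should produce $\sqrt{\Theta_s(b)}$ is on the right track, but without first passing to the $\A$-level resolvent picture you again face the uncontrolled $\eta$ term.
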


The proof of \Cref{prop:opval_semicircular_approximation} requires preparation. The following lemma gives some Lipschitz bounds for operator-valued Cauchy transforms.

\begin{lemma}\label{lem:Cauchy_Lipschitz}
Let $(\A,\E,\B)$ be an operator-valued $C^\ast$-probability space and consider $x=x^\ast \in \A$. Then, for all $b_1,b_2\in \H^+(\B)$, we have that
\begin{equation}\label{eq:Cauchy_bound-2}
\|G_x(b_1) - G_x(b_2)\| \leq \|\Im(b_1)^{-1}\| \|\Im(b_2)^{-1}\| \|b_1 - b_2\|
\end{equation}
and moreover, if $\phi$ is a state on $\B$,
\begin{multline}\label{eq:Cauchy_bound-3}
|\phi(G_x(b_1)) - \phi(G_x(b_2))|\\ \leq \sqrt{\Theta_x(b_1) \Theta_x(b_2)} \|\Im(b_1)^{-1}\| \|\Im(b_2)^{-1}\| \|b_1 - b_2\|.
\end{multline}
\end{lemma}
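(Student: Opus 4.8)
\textbf{Proof plan for Lemma \ref{lem:Cauchy_Lipschitz}.}

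The plan is to exploit the resolvent identity. Writing $R_j := (b_j - x)^{-1}$ for $j=1,2$, we have the standard algebraic identity
\begin{equation*}
R_1 - R_2 = R_1 (b_2 - b_1) R_2 = R_2 (b_2 - b_1) R_1,
\end{equation*}
valid because $b_1,b_2 \in \H^+(\B)$ guarantees that $b_j - x$ is invertible in $\A$ (the resolvent bound \eqref{eq:resolvent_bound} gives $\|R_j\| \leq \|\Im(b_j)^{-1}\|$). Applying the conditional expectation $\E$ and using that $G_x(b_j) = \E[R_j]$, we get $G_x(b_1) - G_x(b_2) = \E[R_1(b_1-b_2)R_2]$ up to a sign. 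The first bound \eqref{eq:Cauchy_bound-2} then follows immediately: $\E$ is contractive (being a conditional expectation onto a $C^\ast$-subalgebra, hence of norm one), so
\begin{equation*}
\|G_x(b_1) - G_x(b_2)\| \leq \|R_1\| \, \|b_1 - b_2\| \, \|R_2\| \leq \|\Im(b_1)^{-1}\| \, \|\Im(b_2)^{-1}\| \, \|b_1 - b_2\|.
\end{equation*}

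For the second bound \eqref{eq:Cauchy_bound-3}, I would compose with the state $\phi$ to obtain a state $\psi := \phi \circ \E$ on $\A$ and apply the Cauchy--Schwarz inequality for $\psi$ to the factorization $\phi(G_x(b_1)) - \phi(G_x(b_2)) = \psi\big(R_1 (b_1 - b_2) R_2\big)$. The natural grouping is
\begin{equation*}
\big|\psi\big(R_1 (b_1 - b_2) R_2\big)\big| \leq \|b_1 - b_2\| \cdot \psi(R_1 R_1^\ast)^{1/2} \, \psi(R_2^\ast R_2)^{1/2},
\end{equation*}
so the task reduces to showing $\psi(R_j R_j^\ast) \leq \Theta_x(b_j) \|\Im(b_j)^{-1}\|^2$ (and the analogous bound for $\psi(R_j^\ast R_j)$, which follows by the same reasoning applied to $x$ and $\overline{b_j}$, or by noting $R_j^\ast = (b_j^\ast - x)^{-1}$). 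The key computational point is the identity $R_j R_j^\ast = (b_j - x)^{-1}(b_j^\ast - x)^{-1}$, together with the elementary operator identity
\begin{equation*}
(b_j - x)^{-1}(b_j^\ast - x)^{-1} = \frac{1}{2i}\Big((b_j^\ast - x)^{-1} - (b_j - x)^{-1}\Big) (\Im(b_j))^{-1} \cdot (\text{something}),
\end{equation*}
which I would make precise as follows: since $(b_j^\ast - x) - (b_j - x) = b_j^\ast - b_j = -2i\Im(b_j)$, the resolvent identity gives $R_j^\ast - R_j = R_j(b_j - b_j^\ast)R_j^\ast = 2i\, R_j \Im(b_j) R_j^\ast$, whence $R_j \Im(b_j) R_j^\ast = \frac{1}{2i}(R_j^\ast - R_j) = \Im(R_j^\ast)$; conjugating by $\Im(b_j)^{-1/2}$ and using $\|\Im(b_j)^{-1}\|$ to control the inserted factors yields $R_j R_j^\ast \leq \|\Im(b_j)^{-1}\| \cdot \Im(R_j^\ast) \cdot (\text{rescaled})$. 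Applying $\psi$ and recalling $\Theta_x(b_j) = -\|\Im(b_j)^{-1}\|^{-1}\Im(\phi(G_x(b_j)))$ (so that $\psi(\Im(R_j^\ast)) = \Im(\phi(\E[R_j^\ast])) = -\Im(\phi(G_x(b_j)))^{-}$... up to the sign bookkeeping, this equals $\|\Im(b_j)^{-1}\|\,\Theta_x(b_j)$) gives the desired $\psi(R_j R_j^\ast) \leq \|\Im(b_j)^{-1}\|^2 \Theta_x(b_j)$.

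The main obstacle I anticipate is the careful sign and ordering bookkeeping in this last step: one must track that $\Im(b_j)^{-1}$ is a \emph{positive} invertible element, that $R_j \Im(b_j) R_j^\ast$ is positive with $R_j \Im(b_j) R_j^\ast \leq \|\Im(b_j)^{-1}\|\, R_j R_j^\ast$ in the wrong direction for what we want --- so instead one should write $R_j R_j^\ast = R_j \Im(b_j)^{1/2} \Im(b_j)^{-1} \Im(b_j)^{1/2} R_j^\ast \leq \|\Im(b_j)^{-1}\|\, R_j \Im(b_j) R_j^\ast = \|\Im(b_j)^{-1}\|\, \Im(R_j^\ast)$, and then verify that $\phi(\Im(R_j^\ast)) = \Im\big(\overline{\phi(\E[R_j])}\big) = -\Im(\phi(G_x(b_j)))$, matching $\|\Im(b_j)^{-1}\|\,\Theta_x(b_j)$ exactly. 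Everything else --- the resolvent identity, contractivity of $\E$, Cauchy--Schwarz for the state $\psi$ --- is routine. The symmetric estimate for $\psi(R_j^\ast R_j)$ needed to bound the other factor is obtained identically, using $R_j^\ast R_j = \Im(b_j)^{-1/2}$-conjugation on the other side, or simply by symmetry since $\Theta_x(b_j)$ is the same quantity.
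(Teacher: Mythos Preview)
Your approach is correct and essentially identical to the paper's: both use the resolvent identity and contractivity of $\E$ for \eqref{eq:Cauchy_bound-2}, and for \eqref{eq:Cauchy_bound-3} apply Cauchy--Schwarz to the state $\phi\circ\E$ together with the operator inequality $R_j R_j^\ast \leq \|\Im(b_j)^{-1}\|\, R_j \Im(b_j) R_j^\ast = -\|\Im(b_j)^{-1}\|\,\Im(R_j)$, obtained by inserting $\1 \leq \|\Im(b_j)^{-1}\|\,\Im(b_j)$. The exploratory detours in your write-up (the ``$(\text{something})$'' placeholder and the ``wrong direction'' paragraph) can simply be deleted --- the clean version you arrive at in the final paragraph is exactly the paper's argument.
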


\begin{proof}
Let $b_1,b_2\in\H^+(\B)$ be given. First of all, we check with the help of the resolvent identity that
$$G_x(b_1) - G_x(b_2) = \E\big[(b_2-x)^{-1}(b_2-b_1)(b_1-x)^{-1}\big].$$
Using the standard bound \eqref{eq:resolvent_bound} and the fact that $\E$ is a contraction, we obtain \eqref{eq:Cauchy_bound-2}. In order to prove the bound \eqref{eq:Cauchy_bound-3}, we proceed as follows. First of all, we apply $\phi$ to both sides of the latter identity and involve the Cauchy-Schwarz inequality for the state $\phi\circ\E$ on $\A$, which yields
\begin{align*}
\lefteqn{|\phi(G_x(b_1)) - \phi(G_x(b_2))|}\\
&\quad\leq (\phi \circ \E)\big((b_2-x)^{-1} (b_2^\ast-x)^{-1}\big)^{1/2}\\
&\quad\qquad\quad (\phi \circ \E)\big((b_1^\ast - x)^{-1} (b_1 - b_2)^\ast (b_1 - b_2) (b_1 - x)^{-1}\big)^{1/2}\\
&\quad\leq (\phi \circ \E)\big((b_2-x)^{-1} (b_2^\ast-x)^{-1}\big)^{1/2}\\
&\quad\qquad\quad (\phi \circ \E)\big((b_1^\ast-x)^{-1} (b_1-x)^{-1}\big)^{1/2} \|b_1 - b_2\|.
\end{align*}
Next, we notice that $\Im(b)^{-1} \leq \|\Im(b)^{-1}\| \1$ and thus $\Im(b) \geq \|\Im(b)^{-1}\|^{-1} \1$ for every $b\in\H^+(\B)$, which allows us to bound
\begin{align*}
\lefteqn{(\phi \circ \E)\big((b_1^\ast-x)^{-1} (b_1-x)^{-1}\big)}\\
&\quad\leq (\phi \circ \E)\big((b_1^\ast-x)^{-1} \Im(b_1) (b_1-x)^{-1}\big) \|\Im(b_1)^{-1}\|\\
&\quad\leq - \Im(\phi(G_x(b_1))) \|\Im(b_1)^{-1}\| = \Theta_x(b_1) \|\Im(b_1)^{-1}\|^2
\end{align*}
and in the same way
$$(\phi \circ \E)\big((b_2-x)^{-1} (b_2^\ast-x)^{-1}\big) \leq \Theta_x(b_2) \|\Im(b_2)^{-1}\|^2.$$
By putting these three bounds together, we arrive at \eqref{eq:Cauchy_bound-3}.
\end{proof}

Now, having \Cref{lem:Cauchy_Lipschitz} at our disposal, we are prepared to return to our actual goal.

\begin{proof}[Proof of \Cref{prop:opval_semicircular_approximation}]
Let us define $\Lambda := b - \Delta_b(\tilde{w}_\ast)$. The first step is to prove that $\Lambda\in \H^+(\B)$ and
\begin{equation}\label{eq:Lambda_bound}
\|\Im(\Lambda)^{-1}\| \leq \frac{1}{1-\sigma} \|\Im(b)^{-1}\|.
\end{equation}
These claims can be verified as follows. From \eqref{eq:approximate_solution}, we infer that
$$\|\Im(\Lambda)-\Im(b)\| \leq \|\Delta_b(\tilde{w}_\ast)\| \leq \sigma \|\Im(b)^{-1}\|^{-1}.$$
Using this as well as the bound $\Im(b) \geq \|\Im(b)^{-1}\|^{-1} \1$ which was already applied in the proof of \Cref{lem:Cauchy_Lipschitz}, we get for every state $\phi$ on $\B$ that
$$\phi(\Im(\Lambda)) = \phi(\Im(b)) + \phi(\Im(\Lambda)-\Im(b)) \geq (1-\sigma)\|\Im(b)^{-1}\|^{-1} > 0.$$
This proves $\Lambda\in \H^+(\B)$; in fact, we see that $\Im(\Lambda) \geq (1-\sigma)\|\Im(b)^{-1}\|^{-1} \1$, which yields the desired bound \eqref{eq:Lambda_bound}.

Next, we observe that
$$\Lambda \tilde{w}_\ast = \big(b-\Delta_b(\tilde{w}_\ast)\big) \tilde{w}_\ast = \big(\tilde{w}_\ast^{-1} + \eta(\tilde{w}_\ast)\big) \tilde{w}_\ast = 1 + \eta(\tilde{w}_\ast) \tilde{w}_\ast,$$
which tells us that $\tilde{w}_\ast$ is the unique solution of \eqref{eq:approximate_solution} for $\Lambda$ instead of $b$.

For the $\B$-valued Cauchy transform $G_s: \H^+(\B) \to \H^-(\B)$ of $s$, the latter observation tells us that $G_s(\Lambda) = \tilde{w}_\ast$; recall that by definition $G_s(b) = w_\ast$. With the help of the bounds \eqref{eq:Cauchy_bound-2}, \eqref{eq:Lambda_bound}, and \eqref{eq:approximate_solution}, we thus obtain that
\begin{align*}
\|\tilde{w}_\ast - w_\ast\| &= \|G_s(\Lambda)-G_s(b)\|\\
                      &\leq \|\Im(\Lambda)^{-1}\| \|\Im(b)^{-1}\| \|\Delta_b(\tilde{w}_\ast)\|\\
											&\leq \frac{1}{1-\sigma} \|\Im(b)^{-1}\|^2 \|\Delta_b(\tilde{w}_\ast)\|\\
											&\leq \frac{\sigma}{1-\sigma} \|\Im(b)^{-1}\|,
\end{align*}
as stated in \eqref{eq:opval_semicircular_approximation-1}. Using \eqref{eq:Cauchy_bound-3} instead of \eqref{eq:Cauchy_bound-2}, we obtain
\begin{align*}
|\phi(\tilde{w}_\ast)-\phi(w_\ast)|
&= |\phi(G_s(\Lambda))-\phi(G_s(b))|\\
&\leq \sqrt{\Theta_s(\Lambda) \Theta_s(b)} \|\Im(\Lambda)^{-1}\| \|\Im(b)^{-1}\| \|\Delta_b(\tilde{w}_\ast)\|\\
&\leq \frac{1}{1-\sigma} \sqrt{\Theta_s(b)} \|\Im(b)^{-1}\|^2 \|\Delta_b(\tilde{w}_\ast)\|\\
&\leq \frac{\sigma}{1-\sigma} \sqrt{\Theta_s(b)} \|\Im(b)^{-1}\|,
\end{align*}
as asserted in \eqref{eq:opval_semicircular_approximation-2}; note that $\Theta_s(\Lambda) \leq 1$ was used in the third step.
\end{proof}

When performing the iteration $(h_b^n(w_0))_{n=1}^\infty$ for fixed $b \in \H^+(\B)$ and any initial point $w_0 \in \H^-(\B)$, the results which have been collected in \Cref{cor:iteration_estimate_Cauchy} guarantee that $h^n_b(w_0)$, as $n\to \infty$, eventually comes arbitrarily close (with respect to the norm $\|\cdot\|$ of $\B$) to the unique solution $w_\ast$ of \eqref{eq:opval_semicircular} in $\H^-(\B)$. At first sight, however, it is not clear whether this can always be detected by the termination condition \eqref{eq:approximate_solution} formulated in \Cref{prop:opval_semicircular_approximation}. The bounds provided by the following lemma prove that this is indeed the case: the sequence $(h_b^n(w_0))_{n=1}^\infty$ converges to the (unique) fixed point $w_\ast \in \H^-(\B)$ of $h_b$ if and only if $\|\Delta_b(h^n_b(w_0))\| \to 0$ as $n \to \infty$.

\begin{lemma}\label{lem:termination_condition_Delta}
In the situation of \Cref{cor:iteration_estimate_Cauchy}, if $r>\|\Im(b)^{-1}\|$ is given, then we have for all $w \in h_b(\D_r)$ that
$$\frac{1}{r^2} \|h_b(w)-w\| \leq \|\Delta_b(w)\| \leq m_r^4 \|\Im(b)^{-1}\|^2 \|h_b(w)-w\|.$$
\end{lemma}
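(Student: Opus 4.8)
The whole lemma will drop out of the single operator identity
$$h_b(w)\,\Delta_b(w)\,w \;=\; w - h_b(w) \qquad\text{for every } w\in\H^-(\B),$$
which I would check by a one-line computation: since $\big(b-w^{-1}-\eta(w)\big)w = (b-\eta(w))w - \1$, multiplying on the left by $h_b(w) = (b-\eta(w))^{-1}$ gives $w - (b-\eta(w))^{-1} = w - h_b(w)$. Because $h_b(w)$ is invertible by construction and every $w\in h_b(\D_r)$ is invertible with $w^{-1}\in\B$, the identity may equivalently be recast as $\Delta_b(w) = h_b(w)^{-1}\big(w-h_b(w)\big)w^{-1}$. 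Both estimates then follow by applying submultiplicativity of the norm to the two forms of this identity and bounding the various factors.

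\textbf{Lower bound.} From the first form, $\|h_b(w)-w\| \le \|h_b(w)\|\,\|\Delta_b(w)\|\,\|w\|$. Here $w\in h_b(\D_r)\subseteq\D_r$ gives $\|w\|<r$, and since $h_b$ maps $\D_r$ into $\D_r$ (part of the setup of \Cref{cor:iteration_estimate_Cauchy}) we also get $h_b(w)\in h_b(\D_r)\subseteq\D_r$, hence $\|h_b(w)\|<r$. Dividing by $r^2$ yields $\tfrac{1}{r^2}\|h_b(w)-w\| \le \|\Delta_b(w)\|$.

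\textbf{Upper bound.} From the second form, $\|\Delta_b(w)\| \le \|h_b(w)^{-1}\|\,\|h_b(w)-w\|\,\|w^{-1}\|$, and I would control the two resolvent-type factors via \eqref{eq:strict_inclusion_imaginary_part}. Applied to $w\in\D_r$ it reads $\Im(h_b(w)) \le -m_r^{-2}\|\Im(b)^{-1}\|^{-1}\1$, so $-h_b(w)\in\H^+(\B)$ with $\Im(-h_b(w)) \ge m_r^{-2}\|\Im(b)^{-1}\|^{-1}\1$; hence \eqref{eq:resolvent_bound} (with self-adjoint part $0$) gives $\|h_b(w)^{-1}\| = \|(-h_b(w))^{-1}\| \le \|\Im(-h_b(w))^{-1}\| \le m_r^2\|\Im(b)^{-1}\|$. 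Writing $w = h_b(v)$ with $v\in\D_r$, which is possible precisely because $w\in h_b(\D_r)$, and applying \eqref{eq:strict_inclusion_imaginary_part} now to $v$ gives in the same way $\|w^{-1}\| \le m_r^2\|\Im(b)^{-1}\|$. Substituting these two bounds produces $\|\Delta_b(w)\| \le m_r^4\|\Im(b)^{-1}\|^2\|h_b(w)-w\|$.

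\textbf{Main point to watch.} There is no genuine obstacle; the only care needed is in the bookkeeping of membership in $\D_r$ versus $h_b(\D_r)$, so that \eqref{eq:strict_inclusion_imaginary_part} and the containment $h_b(\D_r)\subseteq\D_r$ are invoked at legitimate points — in particular, the preimage $v$ with $w=h_b(v)$ is used only to know that $w^{-1}=b-\eta(v)$ exists and that the hypothesis $w\in\D_r$ feeds into \eqref{eq:strict_inclusion_imaginary_part}. Incidentally, one could instead bound $\|h_b(w)^{-1}\| = \|b-\eta(w)\| \le m_r$ and $\|w^{-1}\| = \|b-\eta(v)\| \le m_r$ directly, giving the sharper constant $m_r^2$; since $m_r\|\Im(b)^{-1}\| \ge \|\Im(b)\|\,\|\Im(b)^{-1}\| \ge 1$ this only strengthens the stated inequality, so the weaker form above is sufficient for our purposes.
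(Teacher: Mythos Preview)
Your proof is correct and follows essentially the same route as the paper: both hinge on the identity $\Delta_b(w) = h_b(w)^{-1}\big(w-h_b(w)\big)w^{-1}$ (equivalently $w-h_b(w)=h_b(w)\,\Delta_b(w)\,w$), bound the factors by $r$ for the lower inequality, and invoke \eqref{eq:strict_inclusion_imaginary_part} to control $\|h_b(w)^{-1}\|$ and then $\|w^{-1}\|$ via $w\in h_b(\D_r)$ for the upper one. Your closing remark that $\|h_b(w)^{-1}\|=\|b-\eta(w)\|\le m_r$ yields the sharper constant $m_r^2$ is a correct bonus not recorded in the paper.
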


\begin{proof}
First of all, we observe that for all $w\in \H^-(\B)$
\begin{equation}\label{eq:Delta_identity}
\Delta_b(w) = h_b(w)^{-1} - w^{-1} = h_b(w)^{-1} \big(w - h_b(w)\big) w^{-1}.
\end{equation}
The latter identity can be rewritten as $w - h_b(w) = h_b(w) \Delta_b(w) w$, from which it follows that $\|h_b(w) - w\| \leq r^2 \|\Delta_b(w)\|$ for all $w \in \D_r$ and thus, in particular, for $w \in h_b(\D_r)$. From this, the first of the two inequalities stated in the lemma follows.

For the second one, we proceed as follows. From the bound \eqref{eq:strict_inclusion_imaginary_part}, we get
$$\|h_b(w)^{-1}\| \leq \|\Im(h_b(w))^{-1}\| \leq m_r^2 \|\Im(b)^{-1}\| \quad\text{for all $w \in \D_r$}.$$
Hence, we see that $\|w^{-1}\| \leq  m_r^2 \|\Im(b)^{-1}\|$ for all $w \in h_b(\D_r)$. In summary, we may derive from \eqref{eq:Delta_identity} that
$$\|\Delta_b(w)\| \leq \|h_b(w)^{-1}\| \|h_b(w)-w\| \|w^{-1}\| \leq m_r^4 \|\Im(b)^{-1}\|^2 \|h_b(w)-w\|$$
for all $w \in h_b(\D_r)$, which is the second inequality stated in the lemma.
\end{proof}

\begin{remark}\label{rem:Lipschitz_bound}
Note that $h_b(w_2) - h_b(w_1) = h_b(w_2) \eta(w_2-w_1) h_b(w_1)$ for $w_1,w_2 \in \H^-(\B)$. Thus, for $r>\|\Im(b)^{-1}\|$, it follows that
$$\|h_b(w_2) - h_b(w_1)\| \leq r^2 \|\eta\| \|w_2-w_1\| \quad\text{for all $w_1,w_2 \in \D_r$},$$
i.e., the restriction of $h_b$ to $\D_r$ is Lipschitz continuous.
\end{remark}

\Cref{lem:termination_condition_Delta} ensures that \Cref{prop:opval_semicircular_approximation} can be used as an alternate termination condition in place of \Cref{cor:iteration_estimate_Cauchy}. More precisely, in order to find an approximation $\tilde{w}_\ast$ of $w_\ast$ which deviates from $w_\ast$ with respect to$\|\cdot\|$ at most by $\delta>0$ by using the sequence $(h^n_b(w_0))_{n=1}^\infty$ of iterates of $h_b$, we may proceed as follows: compute $w_0, h_b(w_0), \ldots, h^n_b(w_0)$ iteratively until
\begin{equation}\label{eq:iteration_termination_strong}
\|\Delta_b(h^n_b(w_0))\| \leq \sigma \|\Im(b)^{-1}\|^{-1} \quad\text{for}\quad \sigma := \frac{\delta \|\Im(b)^{-1}\|^{-1}}{1 + \delta \|\Im(b)^{-1}\|^{-1}}
\end{equation}
is satisfied; then $\tilde{w}_\ast := h^n_b(w_0) \in \H^-(\B)$ is the desired approximation of $w_\ast$, i.e., we have that $\|\tilde{w}_\ast - w_\ast\| \leq \delta$.

The analogous procedure based on the bound \eqref{eq:iteration_estimate_Cauchy_a-posteriori} stated in \Cref{cor:iteration_estimate_Cauchy} works as follows: compute $w_0, h_b(w_0), \ldots, h^n_b(w_0)$ iteratively until
\begin{equation}\label{eq:iteration_termination_weak}
\|h^n_b(w_0) - h^{n-1}_b(w_0)\| \leq \frac{\epsilon^2\delta}{4r^2\|\eta\|\|\Im(b)^{-1}\|^2}
\end{equation}
is satisfied; then $\tilde{w}_\ast := h^n_b(w_0) \in \H^-(\B)$ satisfies $\|\tilde{w}_\ast - w_\ast\| \leq \delta$. This procedure, however, looses against the one based on \Cref{prop:opval_semicircular_approximation}; this can be seen as follows. For $b\in \H^+(\B)$ and the given initial point $w_0 \in \H^-(\B)$, we choose $r > \max\{\|\Im(b)^{-1}\|, \|w_0\|\}$. By applying \Cref{lem:termination_condition_Delta} to $h^n_b(w_0)$, we get that
$$\|\Delta_b(h^n_b(w_0))\| \leq m_r^4 \|\Im(b)^{-1}\|^2 \|h_b^{n+1}(w_0)-h_b^n(w_0)\| \quad\text{for all $n\geq 1$}.$$
From \Cref{rem:Lipschitz_bound}, we derive that
$$\|h_b^{n+1}(w_0)-h_b^n(w_0)\| \leq r^2 \|\eta\| \|h_b^n(w_0) - h_b^{n-1}(w_0)\| \quad\text{for all $n\geq 2$}.$$
In summary, we obtain that
$$\|\Delta_b(h^n_b(w_0))\| \leq r^2 m_r^4 \|\Im(b)^{-1}\|^2 \|\eta\| \|h_b^n(w_0) - h_b^{n-1}(w_0)\| \quad\text{for all $n\geq 2$}.$$
Therefore, if $n\geq 2$ is such that the condition \eqref{eq:iteration_termination_weak} is satisfied, then $\|\Delta_b(h^n_b(w_0))\| \leq \frac{1}{4} m_r^4 \epsilon^2\delta$. By definition of $\epsilon$, we have $\epsilon \leq m_r^{-2} \|\Im(b)^{-1}\|^{-1}$; thus, $\|\Delta_b(h^n_b(w_0))\| \leq \frac{1}{4} \|\Im(b)^{-1}\|^{-2} \delta$. Suppose that $\frac{1}{4} \|\Im(b)^{-1}\|^{-1} \delta < \frac{3}{4}$; then the latter can be rewritten as $\|\Delta_b(h^n_b(w_0))\| \leq \tilde{\sigma} \|\Im(b)^{-1}\|^{-1}$ for $\tilde{\sigma} := \frac{1}{4}\|\Im(b)^{-1}\| \delta \in (0,\frac{3}{4})$; thus, $\|\tilde{w}_\ast - w_\ast\| \leq \frac{\tilde{\sigma}}{1-\tilde{\sigma}} \|\Im(b)^{-1}\| < \delta$ by \eqref{eq:opval_semicircular_approximation-1} in \Cref{prop:opval_semicircular_approximation}. This means that for fixed $b\in \H^+(\B)$ and each sufficiently small $\delta>0$, the condition \eqref{eq:iteration_termination_weak} stemming from \Cref{cor:iteration_estimate_Cauchy} breaks off the iteration later than the condition \eqref{eq:iteration_termination_strong} derived from \Cref{prop:opval_semicircular_approximation} does.

\subsection{Estimating the size of atoms}\label{subsec:application_theta}

The following corollary combines the previously obtained results, yielding a procedure by which one can compute approximations $\theta_{\mu_s}$ for operator-valued semicircular elements $s$; as announced at the beginning of this section, the corollary is formulated without the restriction to centered $s$.

\begin{corollary}\label{cor:theta_approximation}
Let $(\A,\E,\B)$ be an operator-valued $C^\ast$-probability space, $\phi$ a state on $\B$, and let $s$ be a (not necessarily centered) $\B$-valued semicircular element in $\A$ with covariance $\eta: \B \to \B$. Denote by $\mu_s$ the analytic distribution of $s$, seen as a noncommutative random variable in the $C^\ast$-probability space $(\A,\phi\circ\E)$.
Then, for every $y\in\R^+$ and $\epsilon>0$, we have that
$$|\theta_{\mu_s}(y) - \tilde{\theta}| < \epsilon\sqrt{\theta_{\mu_s}(y)} \leq \epsilon \qquad\text{with}\qquad \tilde{\theta} := -y\Im(\phi(\tilde{w}_\ast)),$$
where $\tilde{w}_\ast \in \H^-(\B)$ is an approximate solution of \eqref{eq:opval_semicircular} at the point $b=iy\1-\E[s]$ in the sense that $\|\Delta_b(\tilde{w}_\ast)\| < \frac{y\epsilon}{1+\epsilon}$ holds.
\end{corollary}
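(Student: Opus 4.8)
The plan is to reduce to the centered case and then apply \Cref{prop:opval_semicircular_approximation} essentially verbatim; the whole argument is a translation of that proposition through two elementary identities. First I would put $s^\circ := s - \E[s]$, a centered $\B$-valued semicircular element with covariance $\eta$, and $b := iy\1 - \E[s]$. Since $s$ is selfadjoint, so is $\E[s]$, hence $\Im(b) = y\1$; thus $b\in\H^+(\B)$ and $\|\Im(b)^{-1}\| = y^{-1}$. From $G_s^\B(iy\1) = G_{s^\circ}^\B(iy\1 - \E[s]) = G_{s^\circ}^\B(b)$ and \Cref{thm:iteration}, the point $w_\ast := G_{s^\circ}^\B(b)$ is the unique solution of \eqref{eq:opval_semicircular} in $\H^-(\B)$, which is precisely the reference point of \Cref{prop:opval_semicircular_approximation} applied to $s^\circ$ at $b$.

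Next I would record the two bookkeeping identities that make all the factors of $y$ line up. Since $\phi\circ\E$ is a state on $\A$, the distribution $\mu_s$ is a Borel probability measure, and its scalar Cauchy transform satisfies $G_{\mu_s}(iy) = (\phi\circ\E)((iy\1-s)^{-1}) = \phi(\E[(iy\1-s)^{-1}]) = \phi(G_s^\B(iy\1)) = \phi(w_\ast)$, so $\theta_{\mu_s}(y) = -y\,\Im(\phi(w_\ast))$. On the other hand, straight from the definition, $\Theta_{s^\circ}(b) = -\|\Im(b)^{-1}\|^{-1}\,\Im(\phi(G_{s^\circ}^\B(b))) = -y\,\Im(\phi(w_\ast))$. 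Hence $\Theta_{s^\circ}(b) = \theta_{\mu_s}(y)$, and this is the identity that turns the factor $\sqrt{\Theta_s(b)}$ in \eqref{eq:opval_semicircular_approximation-2} into the desired $\sqrt{\theta_{\mu_s}(y)}$.

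With this in hand I would fix $\sigma := \frac{\epsilon}{1+\epsilon}\in(0,1)$. The hypothesis $\|\Delta_b(\tilde{w}_\ast)\| < \frac{y\epsilon}{1+\epsilon} = \sigma\,\|\Im(b)^{-1}\|^{-1}$ places us in the setting of \Cref{prop:opval_semicircular_approximation} for $s^\circ$, the point $b$, the state $\phi$ on $\B$, and the approximate solution $\tilde{w}_\ast$. The first bound in \eqref{eq:opval_semicircular_approximation-2} then reads $|\phi(\tilde{w}_\ast)-\phi(w_\ast)| \leq \frac{1}{1-\sigma}\sqrt{\theta_{\mu_s}(y)}\,y^{-2}\,\|\Delta_b(\tilde{w}_\ast)\|$, and since $\tilde\theta = -y\,\Im(\phi(\tilde{w}_\ast))$ we get $|\theta_{\mu_s}(y)-\tilde\theta| = y\,|\Im(\phi(w_\ast-\tilde{w}_\ast))| \leq y\,|\phi(\tilde{w}_\ast)-\phi(w_\ast)| \leq \frac{1}{1-\sigma}\sqrt{\theta_{\mu_s}(y)}\,y^{-1}\,\|\Delta_b(\tilde{w}_\ast)\|$. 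Now $y^{-1}\|\Delta_b(\tilde{w}_\ast)\| < \sigma$ and $\theta_{\mu_s}(y) > 0$ (the integral in \Cref{prop:theta_properties} \ref{prop:theta_properties-2} being strictly positive since $\mu_s$ is a probability measure), so the right-hand side is strictly below $\frac{\sigma}{1-\sigma}\sqrt{\theta_{\mu_s}(y)} = \epsilon\sqrt{\theta_{\mu_s}(y)}$. Finally $\theta_{\mu_s}(y) = \int_\R \frac{y^2}{y^2+t^2}\,d\mu_s(t) \leq \mu_s(\R) = 1$ by \Cref{prop:theta_properties} \ref{prop:theta_properties-2}, giving $\epsilon\sqrt{\theta_{\mu_s}(y)} \leq \epsilon$.

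I do not expect a real obstacle, since all the analytic work is already contained in \Cref{prop:opval_semicircular_approximation}. The only points requiring care are: checking that the shift by $-\E[s]$ keeps $b$ in $\H^+(\B)$ with $\|\Im(b)^{-1}\| = y^{-1}$ so that the constants cancel correctly; establishing the identity $\Theta_{s^\circ}(b) = \theta_{\mu_s}(y)$; and propagating the strict inequality --- the hypothesis is a strict bound on $\|\Delta_b(\tilde{w}_\ast)\|$, which must be carried through (via multiplication by the strictly positive $\sqrt{\theta_{\mu_s}(y)}$) to yield the strict conclusion.
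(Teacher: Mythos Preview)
Your proof is correct and follows essentially the same approach as the paper: reduce to the centered element $s^\circ$, apply \Cref{prop:opval_semicircular_approximation} at $b=iy\1-\E[s]$ with $\sigma=\frac{\epsilon}{1+\epsilon}$, and use the identity $\Theta_{s^\circ}(b)=\theta_{\mu_s}(y)$. Your version is in fact slightly more careful than the paper's in tracking the strict inequality and in justifying the final bound $\sqrt{\theta_{\mu_s}(y)}\leq 1$.
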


\begin{proof}
Consider $s^\circ := s - \E[s]$, which is a centered $\B$-valued semicircular element with covariance $\eta$. For every $y\in\R^+$, we know that $w_\ast := G_s(b) = G_{s^\circ}(b-\E[s])$ is the unique solution of \eqref{eq:opval_semicircular} at the point $b=iy\1-\E[s]$.
We apply \Cref{prop:opval_semicircular_approximation} for $\sigma := \frac{\epsilon}{1+\epsilon}$; note that by our assumption $\|\Delta_b(\tilde{w}_\ast)\| < \sigma y$ holds. Thus, we obtain from \eqref{eq:opval_semicircular_approximation-2} that
$$|\Im(\phi(\tilde{w}_\ast))-\Im(\phi(w_\ast))| \leq |\phi(\tilde{w}_\ast)-\phi(w_\ast)| \leq \frac{\epsilon}{y} \sqrt{\theta_{\mu_s}(y)},$$
where we used that $\Theta_{s^\circ}(b) = \Theta_s(iy\1) = \theta_{\mu_s}(y)$. By multiplying the latter inequality with $y$, we arrive at the asserted bound.
\end{proof}

\subsection{Example}\label{subsec:iteration_estimate_Cauchy}

Let $s$ be a centered $\B$-valued semicircular element with covariance $\eta: \B \to \B$ satisfying $\eta(\1)=\1$.
As discussed at the end of \Cref{subsec:fixed_point_iteration_Gs}, the sequence $(h_b^n(w_0))_{n=1}^\infty$ converges to the $\B$-valued Cauchy transform $G_s(b)$ of $s$ at the point $b\in \H^+(\B)$, for every choice of an initial point $w_0 \in \H^-(\B)$.

\subsubsection{A priori bound}\label{subsubsec:iteration_estimate_Cauchy_apriori}

First, we want to compute explicitly the number of iteration steps which the a priori bound \eqref{eq:iteration_estimate_Cauchy_a-priori} stated in \Cref{cor:iteration_estimate_Cauchy} predicts in order to compute $G_s(b)$ at the point $b = iy \1$ for some $y>0$ up to an error of at most $\delta>0$, measured with respect to the norm $\|\cdot\|$ on $\B$.

We proceed as follows.
First of all, we have to choose $r > \frac{1}{y}$; in order to maximize $\epsilon = \min\big\{r- \frac{1}{y}, \frac{y}{(y+r)^2}\big\}$, we let $r$ be the unique solution of the equation $(y+r)^2(r-\frac{1}{y}) = y$ on $(\frac{1}{y},\infty)$ (which in fact is the unique solution on $\R$). Then $\epsilon = r - \frac{1}{y} = \frac{y}{(y + r)^2}$ and $q = \frac{2r(y+r)^2}{2r(y+r)^2 + y}$.
Next, we must choose an initial point $w_0 \in h_b(\D_r)$; we take $w_0 = h_b(- i \omega \1) = -i \frac{1}{y+\omega} \1$ for any $\omega \in (0,r)$. Then $h_b(w_0) - w_0 = -i \frac{1}{y(y+\omega)+1} \big(\omega- \frac{1}{y+\omega}\big)\1$. Hence, by \eqref{eq:iteration_estimate_Cauchy_a-priori},
\begin{equation}\label{eq:iteration_estimate_Cauchy_explicit}
\|h_b^n(w_0)-w_\ast\| \leq \frac{4r^2 (y+r)^4}{y^4(y(y+\omega)+1)} \Big|\omega- \frac{1}{y+\omega}\Big| \Big(\frac{2r(y+r)^2}{2r(y+r)^2 + y}\Big)^{n-1}
\end{equation}
for all $n\geq 1$. For the sake of clarity, let us point out that the term $\omega - \frac{1}{y+\omega}$ vanishes precisely if $\omega = -\frac{y}{2} + \sqrt{\frac{y^2}{4}+1}$, which yields the fixed point $-i\omega \1$.

In \Cref{tab:iteration_estimate_Cauchy_comparison}, we apply this result in a few concrete cases; the table lists the number $n$ of iteration steps which are needed until the right hand side of the inequality \eqref{eq:iteration_estimate_Cauchy_explicit} falls below the given threshold value $\delta>0$.
In each case, we compute $n$ for two different choices of $r$. Besides the optimal value of $r$ obtained as the unique real solution of the equation $(y+r)^2(r-\frac{1}{y}) = y$, we consider the explicit value $r = \frac{1}{y} + \big(\frac{1}{\sqrt{2}} - \frac{1}{2}\big)y$. For the latter choice, we have $r - \frac{1}{y} \geq \frac{y}{(y + r)^2}$ and hence $\epsilon = \frac{y}{(y + r)^2}$, so that the bound \eqref{eq:iteration_estimate_Cauchy_explicit} remains true. (In fact, if we try $r= \frac{1}{y} + q y$ for $q>0$, then the inequality $r - \frac{1}{y} \geq \frac{y}{(y + r)^2}$ is equivalent to $q(1+q)\big(\sqrt{1+q} y + \frac{1}{\sqrt{1+q} y}\big)^2 \geq 1$, and the latter is satisfied in particular if $q(1+q) = \frac{1}{4}$, which leads to $q=\frac{1}{\sqrt{2}} - \frac{1}{2}$ as used above.)

\begin{table}
    \centering
    \begin{tabular}{c|c||S[table-format = 3.10]|S[table-format = 10]}
        $y$ & $\delta$ & {$r$} & {$n$}\\
        \hline\multirow{2}{*}{\tablenum{1.0}} & \multirow{2}{*}{\tablenum{0.1}}& 1.2055694304 & 68\\&& 1.2071067812 & 68\\
        \hline\multirow{2}{*}{\tablenum{1.0}} & \multirow{2}{*}{\tablenum{0.01}}& 1.2055694304 & 96\\&& 1.2071067812 & 96\\
        \hline\multirow{2}{*}{\tablenum{0.1}} & \multirow{2}{*}{\tablenum{0.1}}& 10.0009801058 & 494965\\&& 10.0207106781 & 498122\\
        \hline\multirow{2}{*}{\tablenum{0.1}} & \multirow{2}{*}{\tablenum{0.01}}& 10.0009801058 & 541957\\&& 10.0207106781 & 545391\\
        \hline\multirow{2}{*}{\tablenum{0.01}} & \multirow{2}{*}{\tablenum{0.1}}& 100.0000009998 & 9024967288\\&& 100.0020710678 & 9025552583\\
        \hline\multirow{2}{*}{\tablenum{0.01}} & \multirow{2}{*}{\tablenum{0.01}}& 100.0000009998 & 9485576428\\&& 100.0020710678 & 9486190327\\
    \end{tabular}
    \bigskip
    \caption{\emph{A priori estimates} for the number $n$ of iterations depending on the chosen $r$ resulting from \Cref{cor:iteration_estimate_Cauchy} in the case of \Cref{subsubsec:iteration_estimate_Cauchy_apriori}: in each block, the upper $r$ is chosen as the unique solution of $(y+r)^2(r-\frac{1}{y}) = y$, while the lower $r$ is taken as the explicit value $r = \frac{1}{y} + \big(\frac{1}{\sqrt{2}} - \frac{1}{2}\big)y$. Note that $\omega=1$ in all cases.}
    \label{tab:iteration_estimate_Cauchy_comparison}
\end{table}

\subsubsection{A posteriori bounds}\label{subsubsec:iteration_estimate_Cauchy_aposteriori}

Now, we want to compare the termination conditions \Cref{cor:iteration_estimate_Cauchy} and \Cref{prop:opval_semicircular_approximation}. Using well-known facts about continued fractions, one easily finds that
$$h_b^n(-i\omega\1) = -i\, \frac{1}{q_+} \frac{1-\alpha\rho^{n-1}}{1-\alpha\rho^n}\, \1 \qquad\text{for all $n\geq 1$}$$
with $q_\pm = \frac{y}{2} \pm \sqrt{\frac{y^2}{4}+1}$, $\rho = \frac{q_-}{q_+}$, and $\alpha = \frac{q_- + \omega}{q_+ + \omega}$; alternatively, the previously stated formula can be proven by mathematical induction. Hence
$$\Delta_b(h_b^n(-i\omega\1)) = -i\, q_+ \frac{\alpha(1-\rho)^2}{(1-\alpha\rho^n)(1-\alpha\rho^{n-1})}\, \rho^{n-1}\, \1$$
and
$$h_b^{n+1}(-i\omega\1) - h_b^n(-i\omega\1) = i\, \frac{1}{q_+} \frac{\alpha(1-\rho)^2}{(1-\alpha\rho^{n+1})(1-\alpha\rho^n)}\, \rho^{n-1}\, \1$$
In \Cref{tab:iteration_estimate_Cauchy_cont}, we give the number of iteration steps which are needed to satisfy \eqref{eq:iteration_termination_weak} and \eqref{eq:iteration_termination_strong}, respectively, for particular choices of $y$, $r$, and $\omega$. These results are in accordance with the observation that the termination condition given in \Cref{cor:iteration_estimate_Cauchy} breaks off the iteration later than the condition given in \Cref{prop:opval_semicircular_approximation}.

\begin{table}
    \centering
    \begin{tabular}{S|S||S[table-format = 4]|S[table-format = 4]}
    $y$ & $\delta$ & {$n$ by \eqref{eq:iteration_termination_weak}} & {$n$ by \eqref{eq:iteration_termination_strong}}\\
    \hline 1.0 & 0.1 &  7 & 3\\
    1.0 & 0.01 & 10 & 5\\
    0.1 & 0.1 & 244 & 47\\
    0.1 & 0.01 & 267 & 70\\
    0.01 & 0.1 & 4513 & 691\\
    0.01 & 0.01 & 4743 & 922\\
    \end{tabular}

    \bigskip
    
    \caption{Comparison of number of iterations steps needed until the termination condition \eqref{eq:iteration_termination_weak} respectively \eqref{eq:iteration_termination_strong} derived from the \emph{a posterior estimates} stated in \Cref{cor:iteration_estimate_Cauchy} respectively \Cref{prop:opval_semicircular_approximation} is satisfied in the setting of \Cref{subsubsec:iteration_estimate_Cauchy_apriori} and \Cref{subsubsec:iteration_estimate_Cauchy_aposteriori}.
    Note that $\omega=1$ in all cases.
    Furthermore, the explicit value $r = \frac{1}{y} + \bigl(\frac{1}{\sqrt{2}} - \frac{1}{2}\bigr)y$ was used in the computations, corresponding to the lower entries in each block of \Cref{tab:iteration_estimate_Cauchy_comparison}.
    }
    \label{tab:iteration_estimate_Cauchy_cont}
\end{table}

\bibliographystyle{amsalpha}
\bibliography{inner_rank_algorithm}

\providecommand{\bysame}{\leavevmode\hbox to3em{\hrulefill}\thinspace}
\providecommand{\MR}{\relax\ifhmode\unskip\space\fi MR }
\providecommand{\MRhref}[2]{%
  \href{http://www.ams.org/mathscinet-getitem?mr=#1}{#2}
}
\providecommand{\href}[2]{#2}
\begin{thebibliography}{GGOW20}

\bibitem[{And}14]{anderson2014preservation}
G.~W. {Anderson}, \emph{Preservation of algebraicity in free probability},
  preprint, arXiv:1406.6664 (2014), 41.

\bibitem[BEKS17]{julia}
J.~{Bezanson}, A.~{Edelman}, S.~{Karpinski}, and V.~B. {Shah}, \emph{Julia: A
  fresh approach to numerical computing}, SIAM Review \textbf{59} (2017),
  no.~1, 65--98.

\bibitem[{Bla}06]{Blackadar2006}
B.~{Blackadar}, \emph{Operator algebras. {Theory} of {{\(C^*\)}}-algebras and
  von {Neumann} algebras}, Encycl. Math. Sci., vol. 122, Berlin: Springer,
  2006.

\bibitem[BM20]{BM2020}
M.~{Banna} and T.~{Mai}, \emph{H{\"o}lder continuity of cumulative distribution
  functions for noncommutative polynomials under finite free {Fisher}
  information}, J. Funct. Anal. \textbf{279} (2020), no.~8, 44, Id/No 108710.

\bibitem[BMS17]{BMS2017}
S.~T. {Belinschi}, T.~{Mai}, and R.~{Speicher}, \emph{{Analytic subordination
  theory of operator-valued free additive convolution and the solution of a
  general random matrix problem}}, {J. Reine Angew. Math.} \textbf{732} (2017),
  21--53.

\bibitem[CM23]{chatterjee2023noncommutative}
A.~{Chatterjee} and P.~{Mukhopadhyay}, \emph{{The Noncommutative Edmonds'
  Problem Re-visited}}, preprint, arXiv:2305.09984 (2023), 15.

\bibitem[Coh06]{cohn_2006}
P.~M. Cohn, \emph{Free ideal rings and localization in general rings}, New
  Mathematical Monographs, Cambridge University Press, 2006.

\bibitem[CS05]{CS2005}
A.~{Connes} and D.~{Shlyakhtenko}, \emph{{{\(L^2\)}}-homology for von {Neumann}
  algebras}, J. Reine Angew. Math. \textbf{586} (2005), 125--168.

\bibitem[Edm67]{Edmonds67}
J.~Edmonds, \emph{Systems of distinct representatives and linear algebra},
  Journal of research of the National Bureau of Standards \textbf{71B} (1967),
  no.~4, 241--245.

\bibitem[EH70]{EH1970}
C.~J. {Earle} and R.~S. {Hamilton}, \emph{{A fixed point theorem for
  holomorphic mappings}}, {Global Analysis, Proc. Sympos. Pure Math. 16, 61-65
  (1970).}, 1970.

\bibitem[ER05]{edelman2005random}
A.~{Edelman} and N.~R. {Rao}, \emph{Random matrix theory}, Acta numerica
  \textbf{14} (2005), 233--297.

\bibitem[FK52]{FK1952}
B.~{Fuglede} and R.~V. {Kadison}, \emph{Determinant theory in finite factors},
  Ann. Math. (2) \textbf{55} (1952), 520--530.

\bibitem[GGOW16]{GGOW16}
A.~{Garg}, L.~{Gurvits}, R.~{Oliveira}, and A.~{Wigderson}, \emph{A
  deterministic polynomial time algorithm for non-commutative rational identity
  testing}, 2016 IEEE 57th Annual Symposium on Foundations of Computer Science
  (FOCS), IEEE, 2016, pp.~109--117.

\bibitem[GGOW20]{GGOW19}
\bysame, \emph{Operator scaling: theory and applications}, Foundations of
  Computational Mathematics \textbf{20} (2020), no.~2, 223--290.

\bibitem[{Har}03]{Harris2003}
L.~A. {Harris}, \emph{{Fixed points of holomorphic mappings for domains in
  Banach spaces}}, {Abstr. Appl. Anal.} \textbf{2003} (2003), no.~5, 261--274.

\bibitem[HH20]{hamada2020computing}
M.~{Hamada} and H.~{Hirai}, \emph{Computing the nc-rank via discrete convex
  optimization on cat(0) spaces}, preprint, arXiv:2012.13651 (2020), 25.

\bibitem[HM23]{NCDist}
J.~{Hoffmann} and T.~{Mai}, \emph{{NCDist.jl}}, 2023,
  \url{https://github.com/johannes-hoffmann/NCDist.jl}.

\bibitem[HMS18]{HMS2018}
J.~W. {Helton}, T.~{Mai}, and R.~{Speicher}, \emph{Applications of realizations
  (aka linearizations) to free probability}, J. Funct. Anal. \textbf{274}
  (2018), no.~1, 1--79.

\bibitem[HRS07]{HRFS2007}
J.~W. {Helton}, R.~{Rashidi Far}, and R.~{Speicher}, \emph{{Operator-valued
  semicircular elements: solving a quadratic matrix equation with positivity
  constraints}}, {Int. Math. Res. Not.} \textbf{2007} (2007), no.~22, 15.

\bibitem[IQS18]{ivanyos2018constructive}
G.~{Ivanyos}, Y.~{Qiao}, and K.~V. {Subrahmanyam}, \emph{Constructive
  non-commutative rank computation is in deterministic polynomial time},
  Comput. Complexity \textbf{27} (2018), no.~4, 561--593.

\bibitem[KR23]{KR2023}
T.~{Krüger} and D.~{Renfrew}, \emph{Singularity degree of structured random
  matrices}, {preprint, arXiv:2108.08811} (2023), 42.

\bibitem[{Mai}17]{Mai2017}
T.~{Mai}, \emph{{On the analytic theory of non-commutative distributions in
  free probability}}, {PhD thesis Universit{\"a}t des Saarlandes} (2017), 252,
  \url{https://dx.doi.org/10.22028/D291-26704}.

\bibitem[{Mai}22]{Mai2022}
\bysame, \emph{{The Dyson equation for $2$-positive maps and H{\"o}lder bounds
  for the Lévy distance of densities of states}}, {preprint, arXiv:2210.04743}
  (2022), 27.

\bibitem[MS17]{MS17}
J.~A. {Mingo} and R.~{Speicher}, \emph{{Free probability and random
  matrices.}}, vol.~35, Toronto: The Fields Institute for Research in the
  Mathematical Sciences; New York, NY: Springer, 2017.

\bibitem[MS24]{MaiSpeicher2024}
T.~{Mai} and R.~Speicher, \emph{Fuglede-{K}adison determinants of matrix-valued
  semicircular elements and capacity estimates}, {preprint, arXiv:2406.15922}
  (2024), 19.

\bibitem[MSY23]{MSY2023}
T.~{Mai}, R.~{Speicher}, and S.~{Yin}, \emph{The free field: realization via
  unbounded operators and {Atiyah} property}, J. Funct. Anal. \textbf{285}
  (2023), no.~5, 50, Id/No 110016.

\bibitem[NW22]{nadakuditi2022free}
R.~R. Nadakuditi and H.~Wu, \emph{Free component analysis: Theory, algorithms
  and applications}, Foundations of Computational Mathematics (2022), 1--70.

\bibitem[RE08]{rao2008polynomial}
N.~R. {Rao} and A.~{Edelman}, \emph{The polynomial method for random matrices},
  Foundations of Computational Mathematics \textbf{8} (2008), 649--702.

\bibitem[{Spe}98]{S1998}
R.~{Speicher}, \emph{{Combinatorial theory of the free product with
  amalgamation and operator-valued free probability theory}}, vol. 627,
  Providence, RI: American Mathematical Society (AMS), 1998.

\bibitem[SS15]{SS2015}
D.~{Shlyakhtenko} and P.~{Skoufranis}, \emph{{Freely independent random
  variables with non-atomic distributions}}, {Trans. Am. Math. Soc.}
  \textbf{367} (2015), no.~9, 6267--6291.

\bibitem[{Tak}79]{Takesaki1979}
M.~{Takesaki}, \emph{Theory of operator algebras {I}}, New {York},
  {Heidelberg}, {Berlin}: {Springer}-{Verlag}. {VII}, 415 p., 1979.

\bibitem[VDN92]{VDN92}
D.-V. {Voiculescu}, K.~J. {Dykema}, and A.~{Nica}, \emph{Free random variables.
  {A} noncommutative probability approach to free products with applications to
  random matrices, operator algebras and harmonic analysis on free groups}, CRM
  Monogr. Ser., vol.~1, Providence, RI: American Mathematical Society, 1992.

\end{thebibliography}

\end{document}